\newtheorem{theorem}{Theorem}
\newtheorem{corollary}{Corollary}
\newtheorem{proposition}{Proposition}
\newtheorem{lemma}{Lemma}
\theoremstyle{remark}
\newtheorem{assumption}{Assumption}
\theoremstyle{definition}
\newtheorem{definition}{Definition}
\DeclareMathOperator{\dimH}{\mathrm{dim}_\mathrm{H}}
\DeclareMathOperator{\supp}{\mathrm{supp}}
\begin{document}

\title{Shrinking targets in parametrised families}
\date{\today}

\author{Magnus Aspenberg} \address{Centre for Mathematical Sciences,
  Lund University, Box 118, 221 00 Lund, Sweden}
\email{magnusa@maths.lth.se}

\author{Tomas Persson} \address{Centre for Mathematical Sciences, Lund
  University, Box 118, 221 00 Lund, Sweden}
\email{tomasp@maths.lth.se}

\subjclass[2010]{37C45, 37A10, 37E05, 11K55, 11J83}

\begin{abstract}
  We consider certain parametrised families of piecewise expanding
  maps on the interval, and estimate and sometimes calculate the
  Hausdorff dimension of the set of parameters for which the orbit of
  a fixed point has a certain shrinking target property. This
  generalises several similar results for $\beta$-transformations to
  more general non-linear families. The proofs are based on a result
  by Schnellmann on typicality in parametrised families.
\end{abstract}

\maketitle

\section{Introduction}

Let $T \colon M \to M$ be a dynamical system. In analogy to
Diophantine approximation, Hill and Velani \cite{HillVelani} studied
the set of points $x \in M$ such that the orbit hits a shrinking
target around another point $y$ infinitely often. More precisely, they
studied the set
\[
\{ \, x \in M : d (T^n (x), y) < r_n \text{ i.o.}
\, \},
\]
where $r_n \to 0$ as $n \to \infty$. In their case, $T$ is
an expanding rational map on the Riemann sphere, and $M$ is the
corresponding Julia set. They calculated the Hausdorff dimension of
the set in the case $r_n = e^{-\tau n}$.

This and similar sets have later been studied in several different
settings, for instance for $\beta$-transformations by Bugeaud and Wang
\cite{BugeaudWang} and Bugeaud and Liao \cite{BugeaudLiao}.

One can also study other related sets, such as
\[
\{ \, y \in M : d (T^n (x), y) < r_n \text{ i.o.} \, \},
\]
for fixed $x$, as was done for instance in
\cite{FanSchmelingTroubetzkoy}, \cite{LiaoSeuret} and
\cite{PerssonRams}.

In this paper, we will consider a family of maps $T_a$, where $a$ is a
parameter. For fixed $x$ and $y$, we consider the set of parameters
$a$ for which the orbit of $x$ hits a shrinking target around $y$
infinitely often, that is, we consider sets of the form
\[
\{ \, a : d(T_a^n (x), y) < r_n \text{ i.o.} \, \}.
\]
Sets of this kind were previously studied by Persson and Schmeling
\cite{PerssonSchmeling}, in the case where $T_a$ are
$\beta$-transformations,
\[
T_\beta \colon [0,1] \to [0,1); \quad x \mapsto \beta x \mod 1, \qquad
  (\beta > 1),
\]
and $x = 1$ and $y = 0$. The motivation for studying the set \[
E_\alpha = \{ \, \beta : T_\beta (1) = |T_\beta (1) - 0| \leq
\beta^{-\alpha n} \text{ i.o.}  \,\} \] came from the question how
well the dynamics of $T_\beta$ can be approximated with subshifts of
finite type.

It was shown in \cite{PerssonSchmeling} that the Hausdorff dimension
of $E_\alpha$ is $\frac{1}{1 + \alpha}$. Later, this result was
generalised to arbitrary $y$ in \cite{LiPerssonWangWu}. In
\cite{LiPerssonWangWu}, the point $y$ was also allowed to be a
Lipschitz function of the parameter. There are also some related
results in \cite{LuWu}.

In this paper we will generalise the results of
\cite{PerssonSchmeling} and \cite{LiPerssonWangWu} to families of
maps, more general than the $\beta$-trans\-formations. We will consider
certain parametrised families of piecewise expanding maps of an
interval and prove results analogous to those mentioned above. If $T_a
\colon [0,1] \to [0,1]$, $a \in [a_0,a_1]$, is the family of maps and
$X \colon [a_0,a_1] \to [0,1]$ is a $C^1$ function, then, under some
conditions, we prove that
\[
\frac{1}{1 + \alpha} \leq \dimH \{\, a \in [a_0, a_1] : |T_a^n (X(a))
- y| < e^{-\alpha S_n \log |T_a'| (a)} \text{ i.o.} \,\} \leq s,
\]
where $S_n \log |T_a'| (a) = \sum_{k=1}^n \log |T_a'(T_a^k(X(a)))|$
and $s$ is the root of a pressure function. We show that in some
cases, for instance for $\beta$-trans\-forma\-tions, the lower and upper
bounds coincide, and hence the dimension is $1/(1+\alpha)$.

The proofs in \cite{PerssonSchmeling} and \cite{LiPerssonWangWu} rely
on the symbolic dynamics of the $\beta$-trans\-formations, the so
called $\beta$-shifts. In our more general case, we find it
inconvenient to use this method of proof. Instead, we shall rely on
some results by D.~Schnellmann \cite{Schnellmann} on typical points in
families of piecewise expanding maps on the interval. Schnellmann
studied a point $X(a)$ and its orbit $\{T_a^n (X(a))\}$, and showed
that under some conditions on $X$ and the family $T_a$, for almost all
parameters $a$, the point $X(a)$ is typical with respect to $(T_a,
\mu_a)$, where $\mu_a$ is an invariant measure absolutely continuous
with respect to Lebesgue measure. We say that a point $x$ is typical
with respect to $(T_a, \mu_a)$ if \[ \frac{1}{n} \sum_{k=0}^{n-1}
f(T_a^k (x)) \to \int f \, \mathrm{d}\mu_a, \qquad n \to \infty, \]
whenever $f$ is a continuous function.

The proof in Schnellmann's paper \cite{Schnellmann} uses the method
introduced by M.~Benedicks and L.~Carleson in
\cite{BenedicksCarleson}.  We shall rely on this method, both through
Schnellmann's result and through some large deviation estimates that
we will carry out as a part of our proof.  The method is usually used
to prove that certain properties of a family of dynamical systems hold
for a set of parameters with positive Lebesgue measure. Hence our use
of Benedicks' and Carleson's method is a non-typical use, in the sense
that we use it to investigate properties that only hold for a set of
parameters with zero Lebesgue measure.

\section{Statement of results}

We consider a parametrised family of maps $T_a \colon [0,1] \to
[0,1]$, where $a$ is a parameter that lies in a closed interval $[a_0,
  a_1]$. For every $a \in [a_0,a_1]$, we assume that the map $T_a$ is
uniformly expanding and piecewise $C^{1 + \mathrm{Lip}}$.  (By $C^{1 +
  \mathrm{Lip}}$, we mean functions that are differentiable, with
Lipschitz continuous derivatives.)  It is then well known that there
is a $T_a$-invariant probability measure $\mu_a$ that is absolutely
continuous with respect to Lebesgue measure, see Wong \cite{Wong}. The
entropy of the measure $\mu_a$ can be calculated by the Rokhlin
formula,
\begin{equation} \label{eq:Rokhlin}
  h_{\mu_a} = \int \log |T_a'| \, \mathrm{d} \mu_a,
\end{equation}
see Theorem~3 of \cite{Ledrappier}.

Let $X \colon [a_0,a_1] \to [0,1]$ be a $C^1$ function and fix a point
$y \in [0,1]$. We will investigate the set of parameters $a$ such
that \[ |T_a^n (X(a)) - y| < e^{-\alpha S_n \log |T_a'| (a)} \] holds
for infinitely many $n$. Under some assumptions on the family $T_a$
and on the function $X$, we will prove that the set of such parameters
has Hausdorff dimension which is bounded from above by the root of a
pressure function. We will use that for most parameters, $S_n \log
|T_a'| \approx h_{\mu_a} n$ and this will allow us to prove that the
Hausdorff dimension is bounded from below by $1/(1+ \alpha)$.

We assume that $T_a$ depends on $a$ in a smooth way. More precisely,
our assumptions are as follows.

\begin{assumption} \label{ass:discontinuities}
  There are smooth functions $b_0, \ldots, b_p$ with
  \[
  0 = b_0 (a) < b_1 (a) < \cdots < b_p (a) = 1
  \]
  for every $a \in [a_0,a_1]$, such that the restriction of $T_a$ to
  $(b_i (a), b_{i+1} (a))$ can be extended to a monotone $C^{1 +
    \mathrm{Lip}}$ function on some neighbourhood of $[b_i (a),
    b_{i+1} (a)]$.
\end{assumption}

\begin{assumption} \label{ass:expanding}
  There are numbers $1 < \lambda \leq \Lambda < \infty$ such that
  \[
  \lambda \leq |T_a' (x)| \leq \Lambda
  \]
  holds for all $a \in [a_0,a_1]$ and all $x \in [0,1] \setminus \{b_0
    (a), \ldots, b_p (a)\}$. There is a number $L$ such that $T_a'$ is
    Lipschitz continuous with constant $L$ on each $(b_i(a), b_{i+1}
    (a))$.
\end{assumption}

\begin{assumption} \label{ass:parameterdependence}
  For $x \in [0,1]$, the mappings $a \mapsto T_a(x)$ and $a \mapsto
    T_a' (a)$ are piecewise $C^1$.
\end{assumption}

\begin{assumption} \label{ass:comparablederivatives}
 The $n$th iterate of $X(a)$ as a function of $a$ will be denoted by
 $\xi_n (a)$, that is $\xi_n (a) = T_a^n (X(a))$. We assume that there
 is a constant $c$ and a number $N$ such that
  \begin{equation} \label{eq:derivatives}
    c^{-1} < \biggl| \frac{\xi_n' (a)}{(T_a^n)' (X(a))}
    \biggr| < c, \qquad a \in [a_0, a_1],\ n > N.
  \end{equation}
\end{assumption}

\begin{assumption} \label{ass:density}
  There is a unique invariant measure $\mu_a$ which is absolutely
  continuous with respect to Lebesgue measure. The density is denoted
  by $\phi_a$. On the support of $\mu_a$, the density $\phi_a$ is
  bounded away from zero. We assume that there is a constant $\tau$
  such that
  \begin{equation} \label{eq:densitybound}
    \tau < \phi_a < 1/\tau \text{ on } \supp \mu_a, \qquad a \in
         [a_0,a_1].
  \end{equation}
  Moreover, we assume that there is an open interval $S$ such that $S$
  is contained in the support of $\mu_a$ for any $a \in [a_0,a_1]$.
\end{assumption}

It is now time to define the topological pressure of the family $T_a$.

\begin{definition}
  Given a function (potential) $\phi \colon [0,1] \to \mathbb{R}$, the
  topological pressure is defined as
  \[
  P (\phi, [a_0,a_1]) = \limsup_{n\to \infty} \frac{1}{n} \log
  \sum_{I_n (a)} e^{S_n \phi (a)},
  \]
  where the sum is over the largest open subintervals of $[a_0,a_1]$,
  on which $\xi_n$ is continuous, and $S_n \phi (a) = \sup_{I_n (a)}
  \sum_{k=1}^n \phi \circ \xi_k$.
\end{definition}

Under these assumptions we prove the following theorem.

\begin{theorem} \label{the:main}
  Assume that Assumptions~\ref{ass:discontinuities}--\ref{ass:density}
  hold and let $\alpha > 0$.

  Let $s_0$ be the (unique) root of $s \mapsto P (- s (1 + \alpha)
  \log |T_a'|, [a_0,a_1])$. Then
  \[
  \dimH \{\, a \in [a_0, a_1] : |T_a^n (X(a)) - y| < e^{-\alpha S_n
    \log |T_a'|} \text{ i.o.} \,\} \leq s_0
  \]
  holds for every $y$.

  Suppose $X \colon [a_0, a_1] \to [0,1]$ is such that $X(a)$ is
  typical with respect to $(T_a, \mu_a)$ for almost all $a \in
  [a_0,a_1]$.  If $S$ is an open interval such that $S \subset \supp
  \mu_a$ for every $a \in [a_0,a_1]$, then there is an open and dense
  subset $S_0$ of $S$ such that
  \[
  \frac{1}{1 + \alpha} \leq \dimH \{\, a \in [a_0, a_1] : |T_a^n
  (X(a)) - y| < e^{-\alpha S_n \log |T_a'|} \text{ i.o.} \,\}
  \]
  holds for every $y \in S_0$.
\end{theorem}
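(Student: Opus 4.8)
The plan is to prove the two inequalities separately, and for both we first record a bounded distortion estimate. Combining the piecewise $C^{1+\mathrm{Lip}}$ structure and uniform expansion (Assumption~\ref{ass:expanding}) with the derivative comparison in Assumption~\ref{ass:comparablederivatives}, one shows that $a\mapsto\xi_n(a)$, where $\xi_n(a)=T_a^n(X(a))$, has uniformly bounded distortion on each maximal interval $I_n$ of continuity and monotonicity of $\xi_n$ (these form the partition $\mathcal P_n$ used in the pressure). In particular
\[
|\xi_n'(a)|\asymp|(T_a^n)'(X(a))|\asymp e^{S_n\log|T_a'|(a)}
\]
with constants independent of $n$ and $a$, and $S_n\log|T_a'|(a)=\log|(T_a^n)'(X(a))|+O(1)$. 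Write $E$ for the set in the theorem.

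\textbf{Upper bound.} Write $E=\bigcap_M\bigcup_{n\ge M}E_n$ with $E_n=\{\,a:|\xi_n(a)-y|<e^{-\alpha S_n\log|T_a'|(a)}\,\}$. By the distortion estimate, $E_n\cap I_n$ is empty or a single subinterval of length comparable to $e^{-\alpha S_n\log|T_a'|(a)}\,|\xi_n'(a)|^{-1}\asymp e^{-(1+\alpha)S_n\log|T_a'|(a)}$, and a further use of bounded distortion replaces $S_n\log|T_a'|$ by its supremum over $I_n$, i.e.\ by the quantity in the definition of the pressure. Covering $\bigcup_{n\ge M}E_n$ by these subintervals gives, with $\phi:=-s(1+\alpha)\log|T_a'|$,
\[
\mathcal H^{s}_{\infty}\Bigl(\bigcup_{n\ge M}E_n\Bigr)\le C\sum_{n\ge M}\ \sum_{I_n(a)}e^{S_n\phi(a)}.
\]
The function $s\mapsto P(-s(1+\alpha)\log|T_a'|,[a_0,a_1])$ is continuous, strictly decreasing (as $\log|T_a'|\ge\log\lambda>0$), positive at $s=0$ (the number of branches grows exponentially, by uniform expansion) and tends to $-\infty$, so it has a unique root $s_0$; for $s>s_0$ the pressure is negative, the inner sums decay exponentially in $n$, the double sum tends to $0$ as $M\to\infty$, and $\mathcal H^{s}(E)=0$. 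Hence $\dimH E\le s_0$ for every $y$.

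\textbf{Lower bound.} Fix $\varepsilon>0$. Since $\dimH(E\cap J)\le\dimH E$ for any subinterval $J$, it suffices to work on a subinterval $J\subset[a_0,a_1]$ chosen short enough that $h_{\mu_a}=\int\log|T_a'|\,\mathrm d\mu_a$, which one checks depends continuously on $a$ (stability of the absolutely continuous invariant measure, using the uniform density bound of Assumption~\ref{ass:density}), stays within $\varepsilon$ of a constant $h$. We build nested interval families $\mathcal F_0=\{J\}\supset\mathcal F_1\supset\cdots$ along a sequence of times $n_1<n_2<\cdots$ with $n_{j+1}/n_j\to\infty$: given $I^{(j-1)}\in\mathcal F_{j-1}$, first delete the subset on which $|S_m\log|T_a'|(a)-hm|>3\varepsilon m$ for some $m\in[n_{j-1},n_j]$, which by the large deviation estimate (proved via the Benedicks--Carleson method) and the choice of $J$ has measure at most $e^{-cn_j}$, negligible next to $|I^{(j-1)}|$ because $n_j/n_{j-1}\to\infty$; then, among the intervals $I_{n_j}\in\mathcal P_{n_j}$ in the remaining good part, a definite proportion $\theta>0$ are \emph{full}, meaning $\xi_{n_j}$ maps $I_{n_j}$ over a fixed neighbourhood of $y$ — this is a Benedicks--Carleson covering argument, and it is here that $S_0$ enters, its complement in $S$ being a closed nowhere dense set of $y$ for which the discontinuity curves $b_i$ obstruct the covering. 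For each full $I_{n_j}$ let $I_{n_j}^{*}\subset I_{n_j}$ be the subinterval where $|\xi_{n_j}(a)-y|<e^{-\alpha S_{n_j}\log|T_a'|(a)}$; by the distortion estimate $|I_{n_j}^{*}|\asymp e^{-(1+\alpha)S_{n_j}\log|T_a'|(a)}\asymp|I_{n_j}|^{1+\alpha}$, and we let $\mathcal F_j$ consist of these $I_{n_j}^{*}$. Every $a\in F:=\bigcap_{j\ge0}\bigcup_{I\in\mathcal F_j}I$ satisfies $|\xi_{n_j}(a)-y|<e^{-\alpha S_{n_j}\log|T_a'|(a)}$ for all $j$, so $F\subset E$. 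Equipping $F$ with the measure $\nu$ that splits mass equally among children, the good-set property, bounded distortion and $h_{\mu_a}\in(h-\varepsilon,h+\varepsilon)$ show that a level-$(j-1)$ interval has of order $e^{hn_j-(1+\alpha)hn_{j-1}+O(\varepsilon n_j)}$ children, each of length of order $e^{-(1+\alpha)hn_j+O(\varepsilon n_j)}$; since $n_j/n_{j-1}\to\infty$ this is far more branching than needed to sustain dimension $s$ for any $s<\frac1{1+\alpha}-C\varepsilon$, and a routine estimate gives $\nu(B(a,r))\le Cr^{s}$ for $a\in F$ and small $r$. The mass distribution principle then yields $\dimH F\ge\frac1{1+\alpha}-C\varepsilon$, and letting $\varepsilon\to0$ finishes the proof.

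\textbf{Main obstacle.} The distortion estimates and the final mass-distribution bookkeeping are routine; the substance lies in two places. First, the large deviation estimate for $S_n\log|T_a'|(a)$ must be established in the parametrised nonlinear setting, by transferring from parameter space to phase space via the bounded distortion of $a\mapsto\xi_n(a)$ (Assumption~\ref{ass:comparablederivatives}) and then applying a phase-space large deviation bound for Birkhoff sums under $\mu_a$ (whose density is controlled by Assumption~\ref{ass:density}); making this transfer uniform in $n$ and over the short parameter intervals of the construction is the delicate point, and is exactly where the Benedicks--Carleson machinery replaces the symbolic dynamics available for $\beta$-transformations. Second, one must secure the full-branch proportion $\theta>0$ for all $y$ in an open dense set, not merely for almost every $y$, which forces a careful analysis of the interaction of $\xi_n$ with the discontinuity curves $b_i$ and pins down the exceptional set $S\setminus S_0$.
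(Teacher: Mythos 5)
Your upper bound argument is correct and is essentially the paper's proof: cover the limsup set by the subintervals of the continuity intervals of $\xi_n$ that are mapped into the shrinking target, use \eqref{eq:derivatives} and bounded distortion to get $|\hat I_{n,k}|\lesssim e^{-(1+\alpha)S_n\log|T_a'|}$, and sum against the pressure.

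The lower bound, however, has two genuine gaps. First, you base the construction on an exponential large deviation estimate, in parameter space, for the Birkhoff sums $S_m\log|T_a'|(a)$ along $a\mapsto\xi_m(a)$. No such estimate is proved (you yourself flag it as the ``delicate point''), and it does not follow from the hypotheses: the only quantitative input about these Birkhoff sums available in the theorem is that $X(a)$ is typical for a.e.\ $a$, which gives almost-everywhere convergence with no rate. The paper never proves or needs such a bound; it uses typicality plus an Egorov-type argument (Lemma~\ref{lem:entropy}) to get $\frac1n\log|(T_a^n)'(X(a))|\approx h_{\mu_a}$ uniformly off a set of small (not exponentially small) relative measure, and the Benedicks--Carleson large deviation machinery is applied to a different quantity entirely, namely the total \emph{escape time} $\Theta_m$ (Proposition~\ref{largedev}, Corollary~\ref{cor:largedev}), i.e.\ the amount of time the images $\xi_t(I_t(a))$ stay shorter than $\delta$. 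Second, the heart of the matter --- that at each stage a definite proportion of the surviving partition intervals have $\xi_{n_j}$-image covering a fixed neighbourhood of the target --- is asserted via an unspecified ``covering argument'', and your sketch never uses the typicality hypothesis to produce returns of $\xi_n(a)$ near $y$ at all; without it there is no reason any positive proportion of intervals maps over a neighbourhood of a given $y$. In the paper this step is exactly the combination of Lemma~\ref{lem:images} (typicality gives a positive frequency, in $[m,(1+\iota)m]$, of times at which a positive Lebesgue proportion of parameters lands in $B(y,l)$) with the escape-time estimate (a frequency $1-3\tau_1/\iota$ of times with $|\xi_t(I_t(a))|\geq\delta^2$), chosen so the two frequencies sum to more than $1$, which forces a common time $m(a)$. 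Even then one cannot target a prescribed $y$: the image of length $\delta^2$ only contains a ball around some point of a finite $\delta^2/4$-dense set $Y$, and a pigeonhole gives the dimension bound for \emph{some} $y_j\in Y$; the open dense set $S_0$ is then extracted by a contradiction argument over arbitrary subintervals $S_1\subset S$. Your description of $S\setminus S_0$ as the set of $y$ where the discontinuity curves $b_i$ ``obstruct the covering'' does not correspond to an argument and misses this mechanism, which is precisely why the theorem is stated for an open dense set of $y$ rather than for all $y\in S$.
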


The assumed typicality of $X(a)$ for almost all parameters is the main
ingredient in the proof of the lower bound of
Theorem~\ref{the:main}. It will allow us to conclude that for many
large $n$ the values of $T_a^n (x)$ for different parameters $a$ are
well distributed, so that there are plenty of parameters $a$ for which
$|T_a^n (X(a)) - y| < e^{-\alpha S_n \log |T_a'|} \approx e^{-\alpha
  h_{\mu_a} n}$. It is not obvious if $X(a)$ is typical for almost all
$a$, and in fact this need not be the case, for instance when $X(a)$
is a periodic point for all $a$. However, in the next section we
mention some explicit settings, originating from the work of
Schnellmann \cite{Schnellmann}, in which this typicality does
hold. Schnellmann's result is that in several settings, it is possible
to check Assumption~\ref{ass:comparablederivatives} and show that it
implies the almost sure typicality of $X(a)$. We will however need
Assumption~\ref{ass:comparablederivatives} also for other purposes.

In the case that for each $a$, the derivative $|T_a'|$ is constant,
but possibly depending on $a$, we can show that all the bounds in
Theorem~\ref{the:main} coincide, and we get the following corollary.

\begin{corollary} \label{cor:constantderivative}
  Assume that Assumptions~\ref{ass:discontinuities}--\ref{ass:density}
  hold. Let $\alpha > 0$ and suppose $X \colon [a_0, a_1] \to [0,1]$
  is such that $X(a)$ is typical with respect to $(T_a, \mu_a)$ for
  almost all $a \in [a_0,a_1]$. Assume that $x \mapsto |T_a' (a)|$ is
  constant for each $a$.

  If $S$ is an open interval such that $S \subset \supp \mu_a$ for
  every $a \in [a_0,a_1]$, then there is an open and dense subset
  $S_0$ of $S$ such that
  \[
  \dimH \{\, a \in [a_0, a_1] : |T_a^n (X(a)) - y| < e^{-\alpha S_n
    \log |T_a'|} \text{ i.o.} \,\} = \frac{1}{1 + \alpha}
  \]
  holds for every $y \in S_0$.
\end{corollary}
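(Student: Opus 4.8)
The plan is to deduce Corollary~\ref{cor:constantderivative} from Theorem~\ref{the:main} by showing that, when $|T_a'|$ is constant in $x$ for each $a$ (say $|T_a'(x)| = \lambda(a)$), the upper bound $s_0$ in the theorem equals the lower bound $\tfrac{1}{1+\alpha}$. Writing the potential in the pressure function as $-s(1+\alpha)\log|T_a'|$, the Birkhoff sum collapses: $S_n\bigl(-s(1+\alpha)\log|T_a'|\bigr)(a) = -s(1+\alpha) n \log\lambda(a)$ up to the supremum over each interval $I_n(a)$, so the pressure becomes $P\bigl(-s(1+\alpha)\log|T_a'|,[a_0,a_1]\bigr) = \limsup_n \tfrac{1}{n}\log \sum_{I_n(a)} e^{-s(1+\alpha) n \log\lambda(a)}$.

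The key step is then to estimate the exponential growth rate of the number of intervals $I_n(a)$ (the largest open subintervals of $[a_0,a_1]$ on which $\xi_n$ is continuous) against the factor $\lambda(a)^{-sn(1+\alpha)}$. Using Assumption~\ref{ass:comparablederivatives}, $|\xi_n'(a)| \asymp |(T_a^n)'(X(a))| = \lambda(a)^n$ (up to the constant $c$ and the parameter-dependence of $\lambda$), so on each interval $I_n(a)$ the map $\xi_n$ expands lengths by roughly $\lambda(a)^n$; since the total length $a_1 - a_0$ is fixed and the images $\xi_n(I_n(a))$ lie in $[0,1]$, the number of such intervals is comparable to $\lambda(a)^n$ — more precisely one gets $\sum_{I_n(a)} \lambda(a)^{-n} \asymp 1$ by summing $|\xi_n(I_n(a))| \asymp \lambda(a)^n |I_n(a)|$ over a cover of $[a_0,a_1]$ and controlling the variation of $\lambda$ (which is continuous, hence bounded and bounded away from $1$ on the compact parameter interval, by Assumption~\ref{ass:expanding}). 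Feeding this in, $\sum_{I_n(a)} e^{-s(1+\alpha)n\log\lambda(a)} = \sum_{I_n(a)} \lambda(a)^{-s(1+\alpha)n}$ is, up to subexponential factors, of the order $\lambda_{\max}^{(1 - s(1+\alpha))n}$ (or with $\lambda_{\min}$, depending on the sign of $1 - s(1+\alpha)$), so the pressure vanishes precisely when $s(1+\alpha) = 1$, giving $s_0 = \tfrac{1}{1+\alpha}$.

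The remaining ingredient is the typicality hypothesis: since $X(a)$ is assumed typical with respect to $(T_a,\mu_a)$ for almost every $a$, the second half of Theorem~\ref{the:main} applies and furnishes the open dense subset $S_0 \subset S$ for which the lower bound $\tfrac{1}{1+\alpha}$ holds; the first half gives the matching upper bound $s_0 = \tfrac{1}{1+\alpha}$ for every $y$, in particular for $y \in S_0$. Combining the two yields the equality, and one only needs to check that $S_0$ can be taken the same in both — which is immediate since the upper bound holds unconditionally on $y$.

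I expect the main obstacle to be the clean justification that the number of continuity intervals $I_n(a)$ grows like $\lambda(a)^n$ with subexponential corrections, uniformly enough in $a$ that the $\limsup$ in the pressure is controlled. The subtlety is that $\lambda(a)$ varies with $a$, so the sum $\sum_{I_n} \lambda(a)^{-s(1+\alpha)n}$ is not a single geometric term but a mix over parameters; one must argue that the variation of $\log\lambda(a)$ over each (shrinking) interval $I_n(a)$ is negligible compared to $n$, and that the extreme values $\lambda_{\min} = \inf_a \lambda(a)$ and $\lambda_{\max} = \sup_a \lambda(a)$ govern the growth rate. Continuity of $a \mapsto \lambda(a)$ together with Assumption~\ref{ass:comparablederivatives} and the distortion bounds from Assumption~\ref{ass:expanding} should suffice, but making the estimate uniform in $n$ is the technical heart. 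Everything else — the collapse of the Birkhoff sum and the invocation of Theorem~\ref{the:main} — is routine.
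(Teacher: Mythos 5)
Your reduction to Theorem~\ref{the:main} is the right frame, and the lower-bound half of your argument (typicality gives $\geq 1/(1+\alpha)$ on an open dense $S_0$, and the upper bound holds for all $y$) matches the paper. The gap is in your key counting step. From $|\xi_n'(a)|\asymp|(T_a^n)'(X(a))|=\lambda(a)^n$ and bounded distortion you correctly get $|\xi_n(I_n(a))|\asymp\lambda(a)^n|I_n(a)|$, but summing this over the partition of $[a_0,a_1]$ and using $|\xi_n(I_n(a))|\leq 1$ only yields $\sum_{I_n}\lambda(a)^{-n}\gtrsim a_1-a_0$, i.e.\ a \emph{lower} bound on the number of continuity intervals. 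The upper bound $\sum_{I_n}\lambda(a)^{-n}\lesssim 1$ (equivalently, at most $\lambda^{(1+o(1))n}$ intervals) would require the images $\xi_n(I_n(a))$ to be uniformly bounded below in length, which is false in general: partition elements whose image returns very close to a discontinuity have arbitrarily short images, and controlling how often this happens is exactly what the escape-time/large-deviation machinery in the paper is built for. So the step ``the pressure vanishes precisely when $s(1+\alpha)=1$'' is not justified by the argument you give; it is the technical heart, as you suspected, but the length-summation sketch does not close it.

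The paper closes it differently. Since $|T_a'|$ is constant in $x$, the topological entropy of $T_a$ equals $\log|T_a'|$ (a classical fact for constant-slope piecewise monotone maps), and from this the number of partition elements $I_n(a)$ is bounded by $e^{(h_++\varepsilon)n}$ with $h_+=\sup_a\log|T_a'|$. This gives that the root of the pressure is at most $h_+/\bigl((1+\alpha)h_-\bigr)$ with $h_-=\inf_a\log|T_a'|$, which is in general \emph{larger} than $1/(1+\alpha)$ when $\lambda(a)$ varies; the paper then partitions $[a_0,a_1]$ into small parameter subintervals, on each of which $h_+/h_-$ is close to $1$ by continuity of $a\mapsto\log|T_a'|$, and uses countable stability of Hausdorff dimension to conclude the upper bound $1/(1+\alpha)$. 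Note this also corrects a second overstatement in your proposal: you claim $s_0=1/(1+\alpha)$ for the whole parameter interval, whereas the argument that actually works (and the paper's) only needs, and only proves, the dimension bound after localising in the parameter. If you want to salvage your route, you must prove an upper bound on the number of continuity intervals of $\xi_n$ of the order $\lambda^{(1+o(1))n}$ — for instance by transferring the lap-number/entropy bound for $T_a^n$ from phase space to parameter space via Assumption~\ref{ass:comparablederivatives} — rather than deducing it from the total length of $[a_0,a_1]$ and the fact that images sit in $[0,1]$.
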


\section{Examples and Corollaries to Theorem~\ref{the:main}}

In this section, we will show some explicit examples for which
Theorem~\ref{the:main} can be applied.

\subsection{Fixed map}

Using Theorem~\ref{the:main}, we can conclude results for a fixed map
as follows (see also \cite{HillVelani}, where a similar result is
proven for a fixed rational map).

\begin{corollary}
  If\/ $T$ is a fixed map which is mixing with respect to an invariant
  measure $\mu$, satisfying the Assumptions~\ref{ass:expanding} and
  \ref{ass:density}, and $\alpha > 0$, then
  \[
  \frac{1}{1 + \alpha} \leq \dimH \{\, x \in [0,1] : |T^n (x) - y|
  \leq e^{- \alpha S_n \log |T'|} \text{ i.o.} \, \} \leq s_0
  \]
  holds for all $y$ in the interior of $\supp \mu$, where
  $s_0$ is the root of the pressure $P(-s (1+\alpha) \log |T'|)$.
\end{corollary}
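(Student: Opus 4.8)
The plan is to deduce the corollary from Theorem~\ref{the:main} by regarding the fixed map $T$ as a degenerate parametrised family. Take $T_a = T$ for all $a$, let $X(a) = a$ be the identity, and $\mu_a = \mu$. Then $\xi_n (a) = T^n (a)$, so $\xi_n' (a) = (T^n)' (a) = (T^n)' (X(a))$ and Assumption~\ref{ass:comparablederivatives} holds with $c = 1$; Assumption~\ref{ass:discontinuities} holds with the branch points of $T$ as the (constant) functions $b_i$, Assumption~\ref{ass:parameterdependence} holds trivially because $T_a$ does not depend on $a$, and Assumptions~\ref{ass:expanding} and~\ref{ass:density} are among the hypotheses. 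Applying the upper bound in Theorem~\ref{the:main} with parameter interval $[0,1]$ gives $\dimH \{\, x \in [0,1] : |T^n (x) - y| \le e^{-\alpha S_n \log |T'|} \text{ i.o.} \,\} \le s_0$ for every $y$, once one notes that $P(-s(1+\alpha) \log |T_a'|, [0,1])$ equals the usual topological pressure $P(-s(1+\alpha)\log |T'|)$: since $S_n \log |T'| (x) = \log |(T^n)'(x)| + O(1)$ uniformly, the partition sums defining the two pressures agree up to a bounded distortion factor and an index shift. Replacing the strict inequality of Theorem~\ref{the:main} by the non-strict one multiplies the target only by a bounded factor and hence does not affect the dimension.

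For the lower bound, fix a closed interval $[a_0, a_1]$ in the interior of $\supp \mu$. Mixing implies ergodicity, so by Birkhoff's ergodic theorem $\mu$-almost every point is typical for $(T, \mu)$, and since $\tau < \phi < 1/\tau$ on $\supp \mu$, Lebesgue measure and $\mu$ are comparable on $[a_0, a_1]$; hence $X(a) = a$ is typical for Lebesgue-almost every $a \in [a_0, a_1]$. Choosing the interval $S$ of Theorem~\ref{the:main} inside the interior of $\supp \mu$, the lower bound there yields an open dense $S_0 \subset S$ with $\dimH \{\, a \in [a_0, a_1] : |T^n (a) - y| < e^{-\alpha S_n \log |T'|} \text{ i.o.} \,\} \ge \frac{1}{1+\alpha}$ for every $y \in S_0$, and since this set is contained in the set in the corollary, the bound holds there as well for $y \in S_0$. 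To reach an arbitrary $y$ in the interior of $\supp \mu$, we use that mixing of $\mu$ makes $T$ topologically mixing on $\supp \mu$, so that every non-degenerate subinterval of $\supp \mu$ is mapped onto $\supp \mu$ by a sufficiently high iterate of $T$. Thus, given such a $y$, we may pick an interval $J \subset S_0$, an integer $m$ and a monotone branch $J' \subset J$ of $T^m$ such that $T^m$ maps $J'$ diffeomorphically onto a neighbourhood of $y$; let $y' \in J'$ be the point with $T^m (y') = y$. Using $S_n \log |T'| = \log |(T^n)'| + O(1)$, the bounded distortion of $T^m$ on $J'$, and the chain rule, one checks that for a suitable constant $C = C(m, y')$, every $x$ with $|T^n (x) - y'| \le C^{-1} e^{-\alpha S_n \log |T'| (x)}$ for infinitely many $n$ also satisfies $|T^{n+m} (x) - y| \le e^{-\alpha S_{n+m} \log |T'| (x)}$ for infinitely many $n$. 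Hence the set in the corollary with target $y$ contains the corresponding set with target $y'$ and the target scaled by the constant $C^{-1}$, and it remains only to see that this scaling does not change the Hausdorff dimension.

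The main obstacle is precisely this last point. One way to settle it is to revisit the proof of the lower bound in Theorem~\ref{the:main}: there the targeted value is hit exactly along the relevant sequence of times, so the Cantor set constructed in that proof actually lies in the set with the target scaled by any fixed constant and still realises the dimension $\frac{1}{1+\alpha}$. Alternatively one compares with the targets $e^{-(\alpha \pm \varepsilon) S_n \log |T'|}$ and uses the continuity in $\alpha$ of the root of the pressure. A cleaner, though less self-contained, option is to re-examine that same proof and check that, when the family is constant and $T$ is mixing, the set $S_0$ occurring in the lower bound may be taken to be the entire interior of $\supp \mu$, which would make the upgrade unnecessary.
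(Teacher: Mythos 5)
Your proposal is correct and follows essentially the same route as the paper: reduce to Theorem~\ref{the:main} with the constant family $T_a=T$, $X(a)=a$ (typicality from Birkhoff), and then transfer the lower bound from the dense open set $S_0$ to an arbitrary $y$ in the interior of $\supp\mu$ by mixing (the paper cites Lemma~4.4 of \cite{Liverani} to map an interval of $S_0$ onto $S$ under $T^N$) and a chain-rule comparison with a bounded factor $|(T^N)'(z_n)|\le\Lambda^N$. The ``main obstacle'' you flag is handled in the paper exactly as in your first suggestion: the Cantor set built in the proof of the lower bound has slack in the exponent, so a fixed multiplicative constant in the target does not affect the dimension.
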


\begin{proof}
  Take $T_a = T$ for all $a \in [0,1]$ and put $X(a) = a$. Then $X(a)$
  is typical for almost all $a$ according to Birkhoff's Ergodic
  Theorem. Apply Theorem~\ref{the:main} to conclude the result for an
  open and dense set $S_0$ in $\supp \mu$.

  Take any open interval $S \subset \supp \mu$. By Lemma 4.4 in \cite{Liverani}, since we assume that $T$ is mixing,
  for every interval $I$ there is an $N \geq 0$ such that
  \[
  T^N(I) \supset S.
  \]
  Letting $I$ be one of the intervals in $S_0$ we conclude that there
  is an $N \geq 0$ such that for any $y \in S$ there is some $y' \in
  S_0$ such that $T^N(y') = y$. We know that the result holds for
  $y'$.
  For a sequence of numbers $n$ tending to infinity, we have for some
  $z_n$ between $T^n(x)$ and $y'$ that
  \begin{align*}
    |T^{N+n}(x)-T^N(y')| &= |(T^N)'(z_n)||T^n(x)-y'| \\ &\leq
    |(T^N)'(z_n)| e^{-\alpha S_n \log|T'|}.
  \end{align*}
  We note finally that $|(T^N)'(z_n)|$ is bounded by some constant
  (for instance by $\Lambda^N$). It is clear from the proof of
  Theorem~\ref{the:main} that such constants will not change the final
  result. Hence the corollary follows.
\end{proof}

\subsection{$\boldsymbol{\beta}$-transformations and generalised
  $\boldsymbol{\beta}$-transformations}

Suppose $0=t_0 < t_1 < t_2 < \cdots$ are such that $\lim_{n\to\infty}
t_n = \infty$.  Let $T \colon [0,\infty) \mapsto [0,1]$ be a map such
  that for each $n$ the map $T \colon [t_n,t_{n+1}) \mapsto [0,1]$ is
    an increasing $C^2$ map with $T' > 1$.

We will study the family defined by $T_a (x) = T(ax)$, and we call
such families generalised $\beta$-transformations. A simple example is
the usual $\beta$-transformations for which $T (x) = x \mod 1$ and
$T_\beta (x) = T(\beta x) = \beta x \mod 1$, $\beta > 1$.

For families of generalised $\beta$-transformations we have the
following theorem by Schnellmann.

\begin{theorem}[{Schnellmann,
    \cite[Theorem~1.1]{Schnellmann}}] \label{the:schnellmann_beta}
  Suppose $T$ is such that $T(t_n^+) = 0$ for all $n$. If $X \colon
  (1,\infty) \to (0,1]$ is smooth with $X'(a) \geq 0$ for all $a \in
  [a_0, a_1]$ and $T_a(x) = T(ax)$, then the point $X(a)$ is typical
  with respect to $(T_a, \mu_a)$ for almost all $a \in [a_0, a_1]$.
\end{theorem}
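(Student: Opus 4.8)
The plan is to follow the Benedicks--Carleson scheme in the form used by Schnellmann, translating an equidistribution statement for $(T_a,\mu_a)$ in phase space into one for the parameter orbit $\xi_k(a)=T_a^k(X(a))$ in parameter space. First I would reduce the claim to a measure estimate. Fix a countable dense family $(f_j)$ in $C([0,1])$; since the Birkhoff averages and the integrals $\int f\,\mathrm d\mu_a$ are uniformly bounded and $a\mapsto\mu_a$ depends continuously on $a$ (stability of the absolutely continuous invariant measure under perturbation), it suffices to prove that for each $j$ and each $\varepsilon>0$ the set
\[
B_{j,\varepsilon,n}=\Bigl\{\,a\in[a_0,a_1]:\bigl|\tfrac1n\sum_{k=0}^{n-1}f_j(\xi_k(a))-\int f_j\,\mathrm d\mu_a\bigr|>\varepsilon\,\Bigr\}
\]
has Lebesgue measure summable in $n$. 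Borel--Cantelli then gives, for a.e.\ $a$, convergence of the Birkhoff averages along $(f_j)$, and density of $(f_j)$ in $C([0,1])$ upgrades this to all continuous $f$, i.e.\ typicality.

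Second, I would verify the derivative comparability of Assumption~\ref{ass:comparablederivatives} for this family, which is the structural input that makes the scheme run. From $\xi_{n+1}(a)=T(a\xi_n(a))$ one gets, writing $D_n=(T_a^n)'(X(a))$ and $R_n=\xi_n'/D_n$, the recursion $R_{n+1}=R_n+\xi_n/(aD_n)$ with $R_0=X'(a)\ge 0$. Since $D_n\ge\lambda^n$ with $\lambda:=\inf_{a,x}|T_a'(x)|>1$ and $\xi_n\in[0,1]$, the series converges geometrically, and $R_1\ge X(a)/a\ge\delta/a_1>0$ where $\delta=\inf_{[a_0,a_1]}X>0$ (positivity and continuity of $X$ on the compact interval); hence $c^{-1}\le R_n\le c$ uniformly for $n\ge1$. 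The hypothesis $X'\ge0$ is precisely what prevents cancellation in this recursion; it also forces $\xi_n'\ge(\delta/a_1)\lambda^n>0$, so $\xi_n$ is strictly increasing on each interval of monotonicity, and the standard distortion argument (the $C^2$/Lipschitz control on $\log|T'|$ summed along the orbit against the geometrically decaying gaps $\xi_k'/\xi_n'\asymp\lambda^{k-n}$) gives bounded distortion for $\xi_n$ on these intervals. The hypothesis $T(t_n^+)=0$ enters here: it forces each maximal interval of monotonicity of $\xi_n$ whose orbit avoids the discontinuities of $T$ to be mapped by $\xi_n$ onto all of $[0,1]$, giving the ``full branch'' Markov structure.

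Third --- the heart of the matter --- I would transfer a phase-space large deviation estimate to parameter space. For fixed $a$, $T_a$ is piecewise expanding with finitely many $C^{1+\mathrm{Lip}}$ branches (only finitely many $t_n$ lie below $a_1$), so its transfer operator has a spectral gap on functions of bounded variation, with constants uniform in $a$ on the compact parameter interval; this yields a uniform bound $\mu_a\{x:|\tfrac1nS_nf(x)-\int f\,\mathrm d\mu_a|>\varepsilon\}\le Ce^{-c(\varepsilon)n}$. Now decompose $[a_0,a_1]$ into the generation-$n$ full intervals $\omega$ together with a remainder whose measure is summably small (an escape estimate, again using $T(t_n^+)=0$ to see that the non-full pieces are short and not too numerous). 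On a full $\omega$, bounded distortion makes $(\xi_m)_*(\mathrm{Leb}|_\omega)$ comparable to Lebesgue, hence to $\mu_a$ by the density bounds of Assumption~\ref{ass:density}; running the decomposition inductively over a sequence of intermediate generations (a first-return construction on full branches) lets one estimate $\mathrm{Leb}(B_{j,\varepsilon,n}\cap\omega)$ by the phase-space bound, obtaining $\le Ce^{-cn}|\omega|$. Summing over $\omega$ and adding the small remainder produces a bound on $\mathrm{Leb}(B_{j,\varepsilon,n})$ summable in $n$, as required.

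I expect the main obstacle to be exactly this last transfer step. The orbit $\xi_k(a)=T_a^k(X(a))$ depends on $a$ both through the starting point and through the map $T_a$ itself, so one cannot simply push one measure forward once and read off the statistics; the resolution is to work at the level of itineraries (constant on generation-$n$ intervals) and to exploit the \emph{uniform} distortion and \emph{uniform} mixing to keep the coupling between the parameter and phase variables under control. Making this bookkeeping precise, and verifying that the ``non-full'' escape set genuinely has summably small measure, is where the real work lies.
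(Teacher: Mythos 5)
You should first be aware that the paper does not prove this statement at all: it is quoted verbatim as Theorem~1.1 of Schnellmann \cite{Schnellmann} and used as a black box (e.g.\ in Corollary~\ref{cor:beta}), so there is no in-paper proof to compare against. Measured against Schnellmann's actual argument, your plan follows essentially the same route: reduce typicality to a Borel--Cantelli estimate over a countable dense family of test functions, verify that parameter and space derivatives of $\xi_n$ are comparable, and then transfer uniform phase-space large deviations to parameter space along the dynamically defined partition into monotonicity intervals of $\xi_n$. Your recursion $R_{n+1}=R_n+\xi_n/(aD_n)$, with $R_0=X'(a)\ge 0$ and all terms nonnegative because $T'>0$, $\xi_n\ge 0$, $X>0$, is a correct and clean verification of Assumption~\ref{ass:comparablederivatives} for $T_a(x)=T(ax)$; it is in effect the argument of Schnellmann's Section~5.1 and parallels the paper's own Lemma~\ref{lem:Xconst}. (Minor point: statistical stability of $a\mapsto\mu_a$ is not needed for the reduction step; the approximation $\bigl|\int f\,\mathrm{d}\mu_a-\int f_j\,\mathrm{d}\mu_a\bigr|\le\sup|f-f_j|$ works for each fixed $a$.)

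The genuine gap is that the third step, which is the entire content of Schnellmann's theorem, is described rather than proved, and two of its supporting claims need repair. First, the role of $T(t_n^+)=0$ is not that maximal monotonicity intervals of $\xi_n$ are mapped onto all of $[0,1]$: after a return the image $\xi_n(I_n(a))$ is an interval of the form $[0,b)$, generally far from full, and the actual mechanism is an escape/growth argument showing such images exceed a fixed length $\delta$ within a bounded or exponentially controlled time (this is exactly the large-deviation machinery the present paper redevelops in Lemmas~\ref{lem:levelset}--\ref{escape-int} and Corollary~\ref{cor:largedev}). Second, the claim of a transfer-operator spectral gap with constants uniform in $a$ is not automatic for this family: as $a$ varies, branches of $T_a$ (and of the induced parameter dynamics) can become arbitrarily short, so uniform Lasota--Yorke constants and uniform mixing rates require the specific structural hypotheses ($T(t_n^+)=0$, $X'\ge 0$) and a nontrivial argument, which is precisely what Schnellmann supplies. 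Finally, the coupling between the parameter variable and the phase variable (the fact that $\xi_{n+k}$ on a partition element is only comparable, via distortion, to an orbit of a single fixed map) is acknowledged by you as ``where the real work lies''; since that bookkeeping is the theorem, the proposal as written is a correct strategy outline matching the cited source, but not yet a proof.
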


We can now conclude the following.

\begin{corollary} \label{cor:beta}
  Suppose $T$ is such that $T(t_n^+) = 0$ for all $n$. Let $T_a (x) =
  T(ax)$, and assume that $X \colon (1,\infty) \to (0,1]$ is such that
  $X' \geq 0$. For any $1 < a_0 < a_1$ and any open interval $S$ with
  $S \subset \supp \mu_a$ for all $a$, if $y \in S$, then we have
  \[
    \frac{1}{1+\alpha} \leq \dimH \{\, a \in [a_0, a_1] : |T_a^n
    (X(a)) - y| < e^{-\alpha S_n \log |T_a'|} \text{ i.o.} \,\} \leq s,
  \]
  where $s$ is the root of the pressure function $P(- s (1+\alpha)
  \log |T_a'|)$.

  In particular, if $|T'|$ is constant, we have
  \[
  \dimH \{\, a \in [a_0, a_1] : |T_a^n (X(a)) - y| < e^{-\alpha S_n
    \log |T_a'|} \text{ i.o.} \,\} = \frac{1}{1+\alpha}.
  \]
\end{corollary}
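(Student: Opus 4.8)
The plan is to deduce Corollary~\ref{cor:beta} from Theorem~\ref{the:main} together with Schnellmann's Theorem~\ref{the:schnellmann_beta}, after checking that the family $T_a(x) = T(ax)$, $a \in [a_0,a_1] \subset (1,\infty)$, satisfies Assumptions~\ref{ass:discontinuities}--\ref{ass:density}. For Assumption~\ref{ass:discontinuities}: the breakpoints of $T_a$ in $[0,1]$ are the finitely many numbers $b_i(a) = t_i/a$ with $t_i \le a_1$ (finitely many since $t_n \to \infty$), they depend analytically on $a$, and on each branch $T_a$ is $x \mapsto T(ax)$, which extends to a monotone $C^2$ map on a neighbourhood because $T$ does on $[t_i, t_{i+1}]$. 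For Assumption~\ref{ass:expanding}: $|T_a'(x)| = a\,T'(ax) > a \ge a_0 > 1$, so $\lambda = a_0$ works; $T'$ is $C^1$ on the finitely many branches meeting the compact set $[0,a_1]$, so $\Lambda := a_1 \sup T' < \infty$ bounds $|T_a'|$ and $T_a'(x) = a\,T'(ax)$ is Lipschitz with a constant bounded uniformly in $a$. Assumption~\ref{ass:parameterdependence} is immediate, $a \mapsto T(ax)$ and $a \mapsto a\,T'(ax)$ being $C^1$ away from the breakpoints. Assumption~\ref{ass:comparablederivatives} is precisely the transversality-type estimate that Schnellmann verifies for this class of families when $T(t_n^+) = 0$; it is the crux of his proof of Theorem~\ref{the:schnellmann_beta}. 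Finally one checks that for these families the absolutely continuous invariant measure is unique, its density has bounded variation and is bounded away from zero on $\supp\mu_a$, the constant $\tau$ being uniform in $a$ by a continuity argument on the compact parameter interval, and the interval $S$ with $S \subset \supp\mu_a$ for all $a$ is part of the hypothesis; this gives Assumption~\ref{ass:density}.

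With the assumptions in place, Schnellmann's Theorem~\ref{the:schnellmann_beta} applies: since $T(t_n^+) = 0$ and $X' \ge 0$, the point $X(a)$ is typical with respect to $(T_a, \mu_a)$ for almost every $a \in [a_0,a_1]$. Theorem~\ref{the:main} then yields, for every $y$, the upper bound $\dimH\{\,\ldots\,\} \le s$, where $s$ is the root of $s \mapsto P(-s(1+\alpha)\log|T_a'|, [a_0,a_1])$, and it produces an open and dense $S_0 \subset S$ for which the lower bound $\frac{1}{1+\alpha} \le \dimH\{\,\ldots\,\}$ holds whenever $y \in S_0$.

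It remains to promote the lower bound from $y \in S_0$ to all $y \in S$, for which I would run the spreading argument of the fixed-map corollary with uniform control in $a$. Fix $y \in S$. Using that $T_a$ is mixing on $\supp \mu_a \supset S$ (classical exactness of $T_\beta$ on $[0,1]$ in the genuine $\beta$-transformation case, and in general a consequence of expansion together with $S \subset \supp\mu_a$) and Lemma~4.4 of \cite{Liverani}, one finds an integer $N$, taken uniform over $[a_0,a_1]$, and for each $a$ an inverse branch of $T_a^N$ carrying a fixed sub-interval $I \subset S_0$ onto a neighbourhood of $y$; let $y'(a) \in I$ be the point with $T_a^N(y'(a)) = y$. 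Whenever $a$ lies in the set for which $|T_a^n(X(a)) - y'(a)| < e^{-\alpha S_n \log|T_a'|}$ infinitely often, the mean value theorem gives, for the corresponding $n$ and some $z_n$ between $T_a^n(X(a))$ and $y'(a)$,
\[
|T_a^{N+n}(X(a)) - y| = |(T_a^N)'(z_n)|\,|T_a^n(X(a)) - y'(a)| \le \Lambda^N e^{-\alpha S_n \log|T_a'|},
\]
and since $S_{N+n}\log|T_a'|$ and $S_n\log|T_a'|$ differ by at most $N\log\Lambda$ this is $\le C e^{-\alpha S_{N+n}\log|T_a'|}$. As remarked in the proof of Theorem~\ref{the:main}, such a bounded constant and the shift of the index by $N$ do not change the Hausdorff dimension, so the lower bound for the target $y'(a)$ transfers to the constant target $y$.

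The main obstacle I anticipate is this last step: Theorem~\ref{the:main} gives the lower bound only for a \emph{fixed} target in $S_0$, whereas the spreading produces the parameter-dependent target $y'(a)$. One must therefore either check that the lower-bound construction behind Theorem~\ref{the:main} is robust when the constant target is replaced by a smooth curve $a \mapsto y'(a)$ confined to $S_0$ — this should cause no real difficulty, because in that construction the orbit points $\xi_n(a)$ sweep intervals across $S$ as $a$ ranges over small parameter windows, so catching a curve is no harder than catching a point — or else re-run that construction directly with $y'(a)$ in place of $y$; and one must ensure the covering time $N$ can be chosen uniformly over $[a_0,a_1]$, which follows from the uniform expansion. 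The final assertion, that the dimension equals $\frac{1}{1+\alpha}$ when $|T'|$ is constant, needs nothing new: then $T_a'(x) = a\,|T'|$ does not depend on $x$ for each $a$, so Corollary~\ref{cor:constantderivative} applies and forces $s = \frac{1}{1+\alpha}$, and the same spreading argument upgrades the equality from $S_0$ to all of $S$, the upper bound already holding for every $y$.
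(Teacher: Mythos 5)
Your first two steps coincide with the paper's proof: the assumptions are checked exactly as you do, via Schnellmann's results (unique a.c.i.m.\ and density bounds from \cite[Lemma~5.1, Section~5.2]{Schnellmann}, the derivative comparison \eqref{eq:derivatives} from \cite[Section~5.1]{Schnellmann}, typicality from Theorem~\ref{the:schnellmann_beta}), and then Theorem~\ref{the:main} gives the upper bound for every $y$ and the lower bound for $y$ in a dense open $S_0\subset S$. The problem is the step you yourself flag: upgrading the lower bound from $S_0$ to \emph{every} $y\in S$, which is the actual content of the corollary beyond Theorem~\ref{the:main}. Your spreading argument replaces the fixed target $y$ by the $a$-dependent preimage $y'(a)$ with $T_a^N(y'(a))=y$ (a fixed $y'$ cannot work, since $T_a^N(y')$ moves with $a$), and Theorem~\ref{the:main} simply does not apply to a moving target: its lower-bound construction selects, via the finite $\delta^2/4$-dense set $Y$ and a pigeonhole/contradiction argument, \emph{some} good target points, and this is exactly why it only yields a dense open $S_0$. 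Asserting that "catching a curve is no harder than catching a point", or proposing to "re-run the construction with $y'(a)$", is not a reduction to proved results — it is essentially a re-proof of the lower bound, left undone. In addition, your use of Liverani's Lemma~4.4 requires $T_a$ to be mixing on $\supp\mu_a$ with a covering time $N$ uniform in $a$; "expansion together with $S\subset\supp\mu_a$" does not imply mixing (a unique a.c.i.m.\ can have a cyclic structure), so this too would need an argument specific to the family.

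The paper closes this gap by a different and more direct mechanism, indicated in Section~\ref{sssec:assbeta}: because $T(t_n^+)=0$, the images $\xi_n(I_n(a))$ of partition elements are uniformly large (for $\beta$-type families most of them are intervals of the form $[0,b)$ covering the relevant part of $\supp\mu_a$), so in the proof of the lower bound one can dispense with the large-deviation estimate (Corollary~\ref{cor:largedev}) and with the finite set $Y$ altogether, and build the Cantor set directly inside $E(y)$ for the \emph{given} $y\in S$. If you want to salvage your route instead, you would have to prove a moving-target version of the lower bound in Theorem~\ref{the:main} (targets $a\mapsto y'(a)$ Lipschitz in $a$, in the spirit of \cite{LiPerssonWangWu}) and establish uniform covering for the family; as it stands, the key step of the corollary is missing.
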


\begin{proof}
  This follows from Theorem~\ref{the:main} and results by Schnellmann
  \cite{Schnellmann}, including Theorem~\ref{the:schnellmann_beta}.

  For the family $T_a(x) = T(ax)$, it was shown by Schnellmann, that
  there is a unique absolutely continuous invariant measure $\mu_a$,
  \cite[Lemma~5.1]{Schnellmann}, and it was shown that the measure
  $\mu_a$ has a density $\phi_a$ \cite[Section~5.2]{Schnellmann}. On
  the support of $\mu_a$, the density $\phi_a$ is bounded away from
  zero and for any compact interval $I \subset (1,\infty)$ there is a
  constant $\tau$ such that
  \[
    \tau < \phi_a < 1/\tau \text{ on } \supp \mu_a, \qquad a \in [a_0,
      a_1].
  \]
  Moreover, the support of $\mu_a$ is an interval $[0, L(a)]$, where
  $L$ is a piecewise constant function with isolated jumps.

  The estimate \eqref{eq:derivatives} follows by
  \cite[Section~5.1]{Schnellmann}, and $X(a)$ is typical with respect
  to $(T_a,\mu_a)$ for almost all $a$ according to
  Theorem~\ref{the:schnellmann_beta}.

  The remaining assumptions of Theorem~\ref{the:main} are clearly
  satisfied for the family $T_a(x) = T(ax)$. Now the corollary follows
  from Theorem~\ref{the:main}. The fact that we can let $S$ be any
  open interval in$\supp \mu_a$ is explained in
  Section~\ref{sssec:assbeta}.
\end{proof}

The condition \eqref{eq:derivatives} is sometimes difficult to
check. In the case that $X$ is constant, things are simpler, as shown
by the following lemma.

\begin{lemma} \label{lem:Xconst}
  Let $T_a (x) = T(ax)$. Then
  Assumption~\ref{ass:comparablederivatives} is fulfilled for large
  enough $a$, if $X'(a)=0$ and $X > 0$.
\end{lemma}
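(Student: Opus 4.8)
The plan is to exploit the special structure $T_a(x) = T(ax)$ together with the constancy of $X$ to compare $\xi_n'(a)$ with $(T_a^n)'(X(a))$ directly by differentiation. Write $X(a) = x_0$ a constant and $\xi_n(a) = T_a^n(x_0)$. Since $T_a = T \circ (m_a)$, where $m_a(x) = ax$, the chain rule gives $\xi_{n+1}(a) = T(a\,\xi_n(a))$, and differentiating in $a$ yields the recursion
\[
\xi_{n+1}'(a) = T'(a\,\xi_n(a)) \cdot \bigl( \xi_n(a) + a\,\xi_n'(a) \bigr).
\]
On the other hand $(T_a^{n+1})'(x_0) = \prod_{k=0}^{n} T_a'(\xi_k(a)) = \prod_{k=0}^n a\,T'(a\,\xi_k(a))$, which satisfies $(T_a^{n+1})'(x_0) = a\,T'(a\,\xi_n(a)) \cdot (T_a^n)'(x_0)$. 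Dividing the two recursions, the ratio $R_n(a) := \xi_n'(a) / (T_a^n)'(x_0)$ obeys
\[
R_{n+1}(a) = \frac{1}{a}\cdot\frac{\xi_n(a)}{(T_a^n)'(x_0)} + R_n(a).
\]

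Thus $R_{n+1}(a) = R_{n_0}(a) + \frac1a\sum_{k=n_0}^{n} \xi_k(a)/(T_a^k)'(x_0)$. Since $|\xi_k(a)| \le 1$ and, by Assumption~\ref{ass:expanding} (which holds here because $T' > 1$ on a compact parameter interval forces $|T_a'| = a\,|T'| \ge \lambda > 1$ for $a$ bounded below), we have $|(T_a^k)'(x_0)| \ge \lambda^k$, the series $\sum_k \xi_k(a)/(T_a^k)'(x_0)$ converges absolutely and uniformly in $a$, with tail bounded by $\sum_{k \ge k_0}\lambda^{-k} \to 0$. Hence $R_n(a)$ converges uniformly to a limit $R_\infty(a)$ and $|R_n(a) - R_\infty(a)|$ is small for $n$ large. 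It remains to show $R_\infty(a)$ is bounded away from $0$ and $\infty$; boundedness above is immediate from the geometric bound, and for the lower bound I would use $X > 0$: the dominant term in the series is the $k=0$ term $\xi_0(a)/(T_a^0)'(x_0) = x_0 > 0$, and for $a$ large enough the remaining terms are geometrically dominated (the ratio of consecutive denominators is at least $a\lambda$), so $R_\infty(a) = \frac1a(x_0 + O(1/(a\lambda)))$ stays bounded away from zero once $a$ is large enough. Choosing $N$ so that the tail is small and $c$ accordingly gives \eqref{eq:derivatives}.

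The main obstacle is the lower bound on $|R_\infty(a)|$, equivalently ruling out cancellation in the signed series $x_0 + \sum_{k\ge1}\xi_k(a)/(T_a^k)'(x_0)$. The hypotheses $X'(a)=0$ and $X>0$ are exactly what is needed: constancy of $X$ produces the clean additive recursion above with no extra term fighting the $\xi_k$ contributions, and positivity of $X$ makes the leading term of the series strictly positive; requiring $a$ large then forces every later term to be a factor $\gtrsim a\lambda$ smaller than its predecessor, so no sign changes can conspire to make the sum vanish. One should also note that $\xi_n'$ might fail to be defined at the finitely many parameters where $\xi_n$ is discontinuous, but since \eqref{eq:derivatives} is only asserted on the intervals of continuity of $\xi_n$ (and the estimates above are uniform on each such interval), this causes no difficulty.
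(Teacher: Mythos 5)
Your argument is correct, and it takes a genuinely different (and more elementary) route than the paper. You exploit the exact structure $T_a(x)=T(ax)$ and $X'\equiv 0$ to turn the standard recursion $\xi_{n+1}'(a)=T'(a\xi_n(a))\bigl(\xi_n(a)+a\,\xi_n'(a)\bigr)$ into a purely additive recursion for the ratio $R_n=\xi_n'/(T_a^n)'(x_0)$, which you then solve explicitly as the convergent series $R_n(a)=\tfrac1a\sum_{k=0}^{n-1}\xi_k(a)/(T_a^k)'(x_0)$; the two-sided bound in \eqref{eq:derivatives} drops out from $|\xi_k|\le 1$, $(T_a^k)'\ge a^k$, and the strictly positive leading term $x_0=X>0$. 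In fact your cancellation worry is vacuous here: the branches of $T$ are increasing, so $(T_a^k)'>0$ and $\xi_k\ge 0$, hence every term of the series is non-negative and $R_n(a)\ge x_0/a$ holds outright, with the upper bound $R_n(a)\le 1/(a-1)$; large $a$ only sharpens the constants. The paper instead follows the multiplicative Benedicks--Carleson-type scheme of Proposition~4.6 in \cite{MA}: it first proves by induction that $|\xi_n'(a)|$ grows exponentially (using positivity of $T(ax)$, $T'(ax)$ and $x>0$ to start the induction at $\xi_2'$), and then controls $Q_n=(T_a^n)'(x)/\xi_n'(a)$ through a product $\prod_j\bigl(1+\partial_a T_a(\xi_j)/(T_a'(\xi_j)\xi_j'(a))\bigr)$ whose correction sum $\sum_j \tfrac{x}{a}e^{-\gamma'(2+j)}$ is made small by taking $a$ large. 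What your telescoping buys is an explicit, self-contained formula for the ratio and a very short proof; what the paper's product formula buys is a template that does not rely on the additive identity being exact, and which is reused essentially verbatim in the proof of Corollary~\ref{cor:analytic}, where $X$ is no longer constant and only exponential growth of $\xi_n'$ is available. Your closing remark about the finitely many discontinuity parameters of $\xi_n$ on $[a_0,a_1]$ is the right caveat and is handled the same way implicitly in the paper.
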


The proof of Lemma~\ref{lem:Xconst} is in Section~\ref{sec:X}. The
proof lets us also conclude the following result.

\begin{corollary} \label{cor:Xconst}
  We can omit Assumption~\ref{ass:comparablederivatives} in the
  assumptions in Theorem~\ref{the:main}, when $X(a)$ is constant and
  $T(X(a)) > 0$.
\end{corollary}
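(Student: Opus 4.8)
The plan is to observe that the proof of Lemma~\ref{lem:Xconst} uses nothing about the special family $T_a(x) = T(ax)$ beyond Assumptions~\ref{ass:discontinuities}--\ref{ass:expanding} and the fact that $X$ is constant; so under those assumptions, together with the positivity hypothesis $T(X(a)) > 0$, the estimate \eqref{eq:derivatives} holds automatically, and Assumption~\ref{ass:comparablederivatives} may then be dropped from the hypotheses of Theorem~\ref{the:main}.

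Write $X(a) \equiv x_0$, $\xi_k = \xi_k(a) = T_a^k(x_0)$, and $D_n = D_n(a) = (T_a^n)'(x_0) = \prod_{k=0}^{n-1} T_a'(\xi_k)$, with $D_0 = 1$. Let $(\partial_a T_a)(x)$ denote the $a$-derivative of the map $a \mapsto T_a(x)$. Differentiating $\xi_{k+1} = T_a(\xi_k)$ with respect to $a$ gives $\xi_{k+1}'(a) = (\partial_a T_a)(\xi_k) + T_a'(\xi_k)\xi_k'(a)$; dividing by $D_{k+1} = T_a'(\xi_k)D_k$, summing over $k$ and using that $\xi_0'(a) \equiv 0$ since $x_0$ is constant, this telescopes to
\[
\frac{\xi_n'(a)}{(T_a^n)'(x_0)} = \sum_{k=0}^{n-1} \frac{(\partial_a T_a)(\xi_k)}{T_a'(\xi_k)\,D_k}.
\]
For the upper bound in \eqref{eq:derivatives}, Assumptions~\ref{ass:discontinuities}--\ref{ass:parameterdependence} and compactness of $[a_0,a_1] \times [0,1]$ give $|(\partial_a T_a)(x)| \le C$, while $|T_a'(\xi_k)| \ge \lambda$ and $|D_k| \ge \lambda^k$ by Assumption~\ref{ass:expanding}; hence the $k$th summand is at most $C\lambda^{-1-k}$ in absolute value, and the sum is bounded by $C/(\lambda-1)$, uniformly in $n$ and $a$. (The countably many parameters for which some $\xi_k$ meets a discontinuity point $b_i(a)$ do not affect a Hausdorff-dimension statement.)

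The lower bound in \eqref{eq:derivatives} is the only delicate point, and it is exactly here that the hypothesis $T(X(a)) > 0$ enters. It guarantees that the first non-vanishing summand --- the $k=0$ term $(\partial_a T_a)(x_0)/T_a'(x_0)$ if $x_0 \neq 0$, and otherwise the $k=1$ term, which involves $T_a(x_0)$ --- is bounded away from $0$ uniformly in $a$, while the tail of the series is controlled by the same geometric bound as above. For $T_a(x) = T(ax)$ this is immediate: each summand equals $\xi_k/a \ge 0$, so the sum is at least its first term, which is precisely the computation behind Lemma~\ref{lem:Xconst}. For general families one has in addition to rule out cancellation of the leading term by the tail, which the positivity condition is arranged to prevent. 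Combining the two bounds gives \eqref{eq:derivatives} with a constant $c$ depending only on the data of the family, so Assumption~\ref{ass:comparablederivatives} is redundant under the stated hypotheses and Theorem~\ref{the:main} applies. I expect this last cancellation issue to be the main obstacle; in the monotone case it disappears.
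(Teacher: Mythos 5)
Your argument is correct in the setting where Corollary~\ref{cor:Xconst} actually lives, namely the generalised $\beta$-family $T_a(x)=T(ax)$ of that subsection (the hypothesis $T(X(a))>0$ only makes sense there), but it reaches \eqref{eq:derivatives} by a genuinely different route than the paper. The paper does not redo the estimate: it keeps the inductive scheme of the proof of Lemma~\ref{lem:Xconst} (exponential growth of $\xi_n'$ plus the product formula \eqref{QN}), and removes the restriction ``for large enough $a$'' by replacing $T(ax)$ with $T(Kax)$ for large $K$; the hypothesis $T(X(a))>0$ makes the factor $KT(Kax)$ appearing in $\xi_2'$ large, and the corollary then follows because the linear change of parameter does not change Hausdorff dimension. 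Your telescoped identity $\xi_n'(a)/(T_a^n)'(x_0)=\sum_{k=0}^{n-1}(\partial_a T_a)(\xi_k)/\bigl(T_a'(\xi_k)D_k\bigr)$, with $D_k=(T_a^k)'(x_0)$, combined with the observation that for $T_a(x)=T(ax)$ every summand equals $\xi_k/(aD_k)\ge 0$ (increasing branches, $\xi_k\ge 0$), gives the two-sided bound directly on the given parameter window: the lower bound is the first non-vanishing term ($x_0/a$, or a term involving $T(0)$ when $x_0=0$, which is where $T(X(a))>0$ enters), and the upper bound is a geometric series. This is arguably cleaner than the paper's argument: no induction, no rescaling by $K$, and no appeal to invariance of dimension under linear maps, and it yields Assumption~\ref{ass:comparablederivatives} for all $a\in[a_0,a_1]$ rather than for a conjugated family.

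One caveat: your opening claim that the proof of Lemma~\ref{lem:Xconst} uses nothing about the special family, and your sketch for general families satisfying only Assumptions~\ref{ass:discontinuities}--\ref{ass:parameterdependence}, do not constitute a proof of a more general statement. The first summand has size at most of order $C/\lambda$ while your tail bound is only of order $C/\bigl(\lambda(\lambda-1)\bigr)$, so without the sign structure (all summands nonnegative, which is special to monotone families of the form $T(ax)$) positivity of $T_a(x_0)$ does not rule out cancellation; indeed \eqref{eq:derivatives} can fail for constant $X$ in general families, e.g.\ when $x_0$ is a common fixed point of all $T_a$, where $\xi_n'\equiv 0$ although $T_a(x_0)=x_0>0$. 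Since the corollary is stated in the $T(ax)$ setting, this does not affect your proof of the statement itself, but the general claim in your first paragraph should be dropped.
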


Suppose now that $X(a)$ and $T_a$ are analytic in $a$. If $X(\tilde
a)$ is pre-periodic or periodic, then we say simply that $X(a)$ is
{\em transversal} near $\tilde a$ if $X(a)$ is not pre-periodic or
periodic for all $a$ in any open neighbourhood of $\tilde a$. Under
these assumption we have the following. (See also \cite{MA-Misse},
\cite{MA} and \cite{BenedicksCarleson} where these methods stems
from.)

\begin{corollary} \label{cor:analytic}
  If\/ $T_a$ is an analytic family and $X(a)$ is analytic and
  transversal in the above sense, then
  Assumption~\ref{ass:comparablederivatives} is satisfied for all $a$
  in some neighbourhood around $\tilde a$.
\end{corollary}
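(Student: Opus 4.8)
The claim is that for an analytic family $T_a$ and an analytic, transversal $X(a)$, Assumption~\ref{ass:comparablederivatives} holds near a parameter $\tilde a$ at which $X(\tilde a)$ is pre-periodic or periodic. The natural approach is the standard transversality-implies-expansion argument from \cite{BenedicksCarleson}, \cite{MA}, \cite{MA-Misse}: compare the parameter derivative $\xi_n'(a)$ with the phase-space derivative $(T_a^n)'(X(a))$ by differentiating the relation $\xi_{n+1}(a) = T_a(\xi_n(a))$ in $a$. This gives the recursion
\[
\xi_{n+1}'(a) = \frac{\partial T_a}{\partial x}(\xi_n(a)) \, \xi_n'(a) + \frac{\partial T_a}{\partial a}(\xi_n(a)),
\]
so dividing by $(T_a^{n+1})'(X(a)) = (T_a)'(\xi_n(a)) (T_a^n)'(X(a))$ and writing $Q_n(a) = \xi_n'(a)/(T_a^n)'(X(a))$, one obtains
\[
Q_{n+1}(a) = Q_n(a) + \frac{1}{(T_a^{n+1})'(X(a))} \frac{\partial T_a}{\partial a}(\xi_n(a)).
\]
Thus $Q_n(a) = Q_0(a) + \sum_{k=1}^{n} (T_a^k)'(X(a))^{-1} \, \partial_a T_a(\xi_{k-1}(a))$. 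Since $|T_a'| \geq \lambda > 1$ by Assumption~\ref{ass:expanding} and $\partial_a T_a$ is bounded (the family is analytic on a compact parameter interval, so all relevant derivatives are bounded, say by a constant $C$), the tail of this series is dominated by $C \sum_{k > N} \lambda^{-k}$, which is as small as we like for $N$ large. Hence $Q_n(a)$ converges uniformly, and the whole problem reduces to showing that the limit is bounded away from $0$ and $\infty$ — equivalently, that $Q_\infty(a) = \lim_n Q_n(a) \neq 0$ for $a$ near $\tilde a$ (the upper bound being automatic from the geometric estimate, the lower bound from continuity of $Q_\infty$ once we know it is nonzero).

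**Where transversality enters.** The function $Q_\infty(a)$ is the limit of analytic functions $Q_n(a)$ that converge uniformly on a complex neighbourhood of $\tilde a$ (here I would use that $X$ and $T_a$ are analytic to extend $\xi_k$ and $Q_k$ to complex $a$, and note the geometric decay persists since $|T_a'| \geq \lambda$ is an open condition), so $Q_\infty$ is itself analytic. Suppose $Q_\infty(\tilde a) = 0$. When $X(\tilde a)$ is periodic of period $m$, one shows by a telescoping computation that $\xi_n'(a) = (T_a^n)'(X(a)) Q_\infty(a) + (\text{small error})$ with error decaying geometrically, and a vanishing $Q_\infty$ on a set with an accumulation point would force $X(a)$ to track the analytic continuation of the periodic orbit — i.e.\ $X(a)$ would be periodic for all $a$ near $\tilde a$, contradicting transversality. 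More precisely: if $Q_\infty \equiv 0$ on a neighbourhood, then $\xi_n'(a) \to 0$ in a controlled way implies the orbit of $X(a)$ is "infinitesimally constant" relative to the dynamics, which by analyticity and the implicit function theorem pins $X(a)$ to the (pre-)periodic orbit. If instead $Q_\infty(\tilde a) = 0$ but $Q_\infty \not\equiv 0$, then zeros of $Q_\infty$ are isolated, so one simply shrinks the neighbourhood of $\tilde a$ to exclude them — but one must be slightly careful that $\tilde a$ itself is handled, which again is exactly what transversality at $\tilde a$ buys: $Q_\infty(\tilde a) \neq 0$.

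**Main obstacle.** The delicate point is rigorously identifying $Q_\infty(\tilde a)$ with a genuine transversality quantity and showing that its vanishing is equivalent to $X(a)$ being (pre-)periodic near $\tilde a$. In the periodic case one fixes $\tilde a$, lets $m$ be the period, and studies the derivative of $a \mapsto \xi_{km}(a) - z(a)$ where $z(a)$ is the analytic continuation of the fixed point of $T_a^m$ through $\tilde a$; a Koenigs-type linearisation or direct estimate shows $\xi_{km}(a) - z(a) \approx (\text{const}) \cdot \mu(a)^k + (\text{parameter term})$ with $\mu(a) = (T_a^m)'(z(a))$ the multiplier, and transversality is precisely the statement that the parameter term does not degenerate — which translates into $Q_\infty(\tilde a) \neq 0$. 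The pre-periodic case is reduced to the periodic case by first iterating $X(a)$ forward until it lands (analytically) on the periodic orbit. Once $Q_\infty(\tilde a) \neq 0$ is established, continuity of $Q_\infty$ gives an explicit neighbourhood and an explicit constant $c$ in \eqref{eq:derivatives}, and one chooses $N$ large enough that the geometric tail is smaller than, say, $|Q_\infty(\tilde a)|/2$; this completes the verification of Assumption~\ref{ass:comparablederivatives}.
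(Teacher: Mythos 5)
Your reduction of the problem to ``$Q_\infty$ is analytic near $\tilde a$ and transversality says exactly that $Q_\infty(\tilde a)\neq 0$'' has two genuine gaps, and the second one is fatal to the route you chose. First, $Q_\infty$ is not analytic, and not even continuous, on any neighbourhood of $\tilde a$: the maps $T_a$ are only piecewise analytic, so $a\mapsto\xi_n(a)$ is analytic only on the partition intervals $I_n(a)$, which shrink to points as $n\to\infty$, and parameters at which some iterate hits a discontinuity point $b_i(a)$ accumulate at every point; hence there is no fixed real (let alone complex) neighbourhood on which all $Q_n$ are defined and analytic, the uniform-limit argument does not apply, and ``zeros of $Q_\infty$ are isolated'' has no meaning. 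Second, with the paper's definition of transversality (only that $X(a)$ is not identically (pre-)periodic near $\tilde a$, i.e.\ $h(a)=X(a)-p(a)\not\equiv 0$, so $h(a)=K(a-\tilde a)^m+\cdots$ with some $m\geq 1$), the equivalence you rely on is simply false. In the fixed-point case your own recursion gives, using $\partial_a T_a(p(a))=(1-\lambda_a)p'(a)$, the exact formula $\xi_n'(\tilde a)=\lambda_{\tilde a}^{\,n}h'(\tilde a)+p'(\tilde a)$, hence $Q_n(\tilde a)=h'(\tilde a)+p'(\tilde a)\lambda_{\tilde a}^{-n}\to h'(\tilde a)$. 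If $m\geq 2$ (which transversality permits), then $Q_\infty(\tilde a)=h'(\tilde a)=0$, and indeed \eqref{eq:derivatives} fails at $a=\tilde a$ itself; so no continuity-at-$\tilde a$ argument can deliver the uniform lower bound, and transversality cannot be identified with $Q_\infty(\tilde a)\neq 0$.

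The paper's proof avoids arguing at $\tilde a$: it continues the periodic point analytically, linearises $T_a$ near $p(a)$ (Koenigs), writes $h(a)=Ka^m+\cdots$, and follows the orbit through the bound period up to the escape time $n(a)\sim\log(1/|h(a)|)$, showing that the dominant term of $\xi_{n(a)}'(a)$ is $\lambda_a^{n(a)}h'(a)$ with $h'(a)\sim a^{m-1}\neq 0$ for $a\neq\tilde a$; this produces exponential growth $|\xi_n'(a)|\geq e^{\gamma n}$ by the end of the bound period, and only then is the comparability propagated to all later times by the product form of your $Q_n$ bookkeeping, exactly as in the proof of Lemma~\ref{lem:Xconst}. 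Your series/tail observation (correct, and a genuine simplification available here because $|T_a'|\geq\lambda>1$ everywhere) gives the upper bound in \eqref{eq:derivatives} and the convergence of $Q_n$ along each orbit, but the lower bound near $\tilde a$ requires the quantitative bound-period analysis driven by the order of vanishing of $h$, which is the actual content of the paper's proof and is missing from your proposal.
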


The proof of Corollary~\ref{cor:analytic} is in Section~\ref{sec:X}.

\subsection{Negative $\boldsymbol{\beta}$-transformations}

The negative $\beta$-transformations are maps $T_a (x) = T(ax)$, with
$T(x) = -x \mod 1$ and $a > 1$. These maps were for instance studied
by G\'{o}ra \cite{Gora}, Ito and Sadahiro \cite{ItoSadahiro}, and Liao
and Steiner \cite{LiaoSteiner}.

For any $a > 1$, there is a unique $T_a$ invariant probability measure
which is absolutely continuous with respect to Lebesgue measure,
\cite{Gora,ItoSadahiro}.

\begin{corollary} \label{cor:negbeta}
  Let $[a_0, a_1] \subset (1,\infty)$ be an interval such that
  \[
  2 a (a - [a]) - 2 - a > 0
  \]
  holds for all $a \in [a_0,a_1]$. Suppose $X \colon [a_0,a_1] \to
  \mathbb{R}$ is $C^1$, with $X'(a) > 2 + \frac{a}{a-1}$, and that $y
  \in [0,1]$. Then
  \[
  \dim_H \{\, a \in [a_0,a_1] : |T_a^n (X(a)) - y| < e^{-\alpha \log a n}
  \text{ i.o.} \, \} = \frac{1}{1 + \alpha}.
  \]
\end{corollary}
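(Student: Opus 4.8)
The plan is to verify that the negative $\beta$-transformation family $T_a(x) = T(ax)$ with $T(x) = -x \bmod 1$ satisfies all the hypotheses of Theorem~\ref{the:main} (in fact of Corollary~\ref{cor:constantderivative}), so that the conclusion $\dimH = 1/(1+\alpha)$ follows. First I would observe that $|T_a'| \equiv a$ is constant in $x$ for each fixed $a$, so $S_n \log|T_a'| = n\log a$ and the displayed shrinking-target condition $|T_a^n(X(a)) - y| < e^{-\alpha\log a\, n}$ is exactly the special case of the theorem with constant derivative; thus once the assumptions are checked, Corollary~\ref{cor:constantderivative} gives the equality rather than just the two-sided bound. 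Assumptions~\ref{ass:discontinuities}, \ref{ass:expanding}, \ref{ass:parameterdependence} are immediate: the branch points are $b_i(a) = i/a$ (smooth in $a$ on the relevant range), each branch of $T_a$ is affine with slope $-a$, so $\lambda = a_0 \le |T_a'| = a \le a_1 = \Lambda$ and the Lipschitz constant of $T_a'$ is $0$; smoothness of $a\mapsto T_a(x)$ and $a\mapsto T_a'(a)$ is clear. Assumption~\ref{ass:density} follows from the known results of G\'ora \cite{Gora} and Ito--Sadahiro \cite{ItoSadahiro}: there is a unique a.c.i.m. $\mu_a$ whose density is piecewise constant, bounded, and bounded away from zero on its support, which is a union of intervals; the inequality $2a(a-[a]) - 2 - a > 0$ is precisely the arithmetic condition guaranteeing that $\supp\mu_a$ contains a fixed open interval $S$ for all $a \in [a_0,a_1]$ (this is the role that hypothesis plays — it controls the location of the support of the invariant density uniformly in $a$).

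Next I would address Assumption~\ref{ass:comparablederivatives} together with the almost-sure typicality of $X(a)$. For this I would invoke Schnellmann's framework \cite{Schnellmann}, which applies to families $T_a(x) = T(ax)$; the negative $\beta$-transformation satisfies $T(t_n^+) = T(n^+) = 0$ for the branch endpoints (here $T(x) = -x \bmod 1$ maps $n^+$ to $0^+$), which is the normalisation needed. The point is to show that the transversality-type bound \eqref{eq:derivatives} — comparing $\xi_n'(a)$ with $(T_a^n)'(X(a))$ — holds; this is where the lower bound $X'(a) > 2 + \frac{a}{a-1}$ enters. Following the computation in \cite[Section~5.1]{Schnellmann} (adapted to the orientation-reversing branches), one differentiates $\xi_{n+1}(a) = T(a\,\xi_n(a)) = -a\,\xi_n(a) \bmod 1$ to get a recursion $\xi_{n+1}'(a) = -\xi_n(a) - a\,\xi_n'(a)$, and dividing by $(T_a^{n+1})'(X(a)) = (-a)^{n+1}\prod(\text{branch signs})$, one finds that the ratio $r_n = \xi_n'(a)/(T_a^n)'(X(a))$ satisfies $r_{n+1} = r_n + O(a^{-n-1}|\xi_n(a)|)$, a summable perturbation; the condition on $X'$ ensures that the initial ratio $r_N$ is bounded away from $0$ (and finite) so that the telescoped product stays in a fixed interval $(c^{-1}, c)$. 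Then Schnellmann's theorem (the analogue of Theorem~\ref{the:schnellmann_beta} for the negative case, or \cite[Theorem~1.1]{Schnellmann} directly, whose hypotheses on $T$ cover orientation-reversing increasing-modulus maps once the sign bookkeeping is done) yields that $X(a)$ is typical for $(T_a,\mu_a)$ for a.e.\ $a$.

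Having checked all of Assumptions~\ref{ass:discontinuities}--\ref{ass:density} and the a.e.\ typicality of $X(a)$, Corollary~\ref{cor:constantderivative} applies and gives an open dense $S_0 \subset S$ with $\dimH\{a : |T_a^n(X(a)) - y| < e^{-\alpha\log a\,n}\ \text{i.o.}\} = 1/(1+\alpha)$ for $y \in S_0$. To upgrade from $y \in S_0$ to arbitrary $y \in [0,1]$ as stated, I would run the same covering/pullback argument as in the proof of the "Fixed map" corollary: since $T_a$ is (topologically) mixing on its support, any target $y$ can be reached from a point $y' \in S_0$ by finitely many iterates $T_a^N$, and the bounded factor $|(T_a^N)'| \le \Lambda^N$ does not affect the dimension of the shrinking-target set — though here one must take a little care that $N$ can be chosen uniformly on $[a_0,a_1]$, which again uses the uniform control on $\supp\mu_a$ furnished by the condition $2a(a-[a]) - 2 - a > 0$. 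The main obstacle I expect is the verification of \eqref{eq:derivatives}: the orientation-reversing branches introduce sign changes into the derivative cocycle and into Schnellmann's distortion estimates, so one has to check that his argument — originally written for increasing branches — goes through after this modification, and to pin down exactly why $X'(a) > 2 + \frac{a}{a-1}$ is the right quantitative threshold (it should come from making the summable tail in the recursion for $r_n$ small enough relative to $|r_N|$, uniformly over the parameter interval).
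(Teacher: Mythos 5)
Your overall skeleton (constant derivative $|T_a'|\equiv a$, so aim for Corollary~\ref{cor:constantderivative}) matches the paper, but two of your key steps do not go through as proposed. First, the typicality/transversality step: Schnellmann's Theorem~\ref{the:schnellmann_beta} requires increasing branches with $T(t_n^+)=0$, and for $T(x)=-x \bmod 1$ this fails — one has $T(t_n^-)=0$ but $T(t_n^+)=1$ — so his theorem does not apply ``directly, once the sign bookkeeping is done''. The paper instead invokes Persson's extension \cite[Corollary~1]{Persson}, and this is also where the arithmetic hypothesis enters: writing $\delta = 2(a-[a])-1$, the condition $2a(a-[a])-2-a>0$ encodes $\delta>0$ together with $2/a<\delta$, which guarantees that the image of every maximal interval of continuity contains the middle interval $(\frac{1-\delta}{2},\frac{1+\delta}{2})$ and that this middle interval contains a full branch $J$ with $T_a(J)=[0,1)$. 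That is the structural hypothesis of Persson's result (which then yields both a.e.\ typicality and Assumption~\ref{ass:comparablederivatives} from $X'(a)>2+\frac{a}{a-1}$, i.e.\ Assumption~2 of \cite{Persson}); your reading of the condition as controlling $\supp\mu_a$ uniformly is not its role — the paper simply quotes G\'ora for $\supp\mu_a=[0,1]$ at these parameters. Your telescoping recursion for the ratio $\xi_n'/(T_a^n)'$ is a reasonable heuristic, but as you concede it does not account for the specific threshold $2+\frac{a}{a-1}$, so the verification of \eqref{eq:derivatives} remains open in your write-up.

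Second, the upgrade from the dense open set $S_0$ to every $y\in[0,1]$ cannot be done by the fixed-map pullback trick. In the fixed-map corollary one uses a single point $y'$ with $T^N(y')=y$; in a parametrised family the preimage of $y$ under $T_a^N$ moves with $a$, so no single $y'\in S_0$ satisfies $T_a^N(y')=y$ for all $a$ in the parameter interval, and Theorem~\ref{the:main} is stated for a fixed target $y$, not a parameter-dependent one. The paper's mechanism is different (Section~\ref{sssec:assbeta}): for negative $\beta$-transformations the images $\xi_n(I_n(a))$ are uniformly large, so the Cantor-set construction in the lower-bound proof can be carried out directly around any prescribed $y$, bypassing the finite net $Y$ and the dense open subset altogether. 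Without replacing your pullback step by an argument of this kind, your proof only gives the stated equality for $y$ in a dense open subset, not for all $y\in[0,1]$.
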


\begin{proof}
  The proof relies on an extension of the results of Schnellmann by
  Persson~\cite[Corollary 1]{Persson}.

  If $X' (a) > 2 + \frac{a}{a-1}$, then Assumption~2 of \cite{Persson}
  is satisfied. Let
  \[
  \delta = 2 (a - [a]) - 1,
  \]
  and assume that $\delta > 0$. It then holds that, if $I$ is a
  maximal interval of continuity of $T_a$, then
  $(\frac{1-\delta}{2},\frac{1+\delta}{2}) \subset T_a (I)$.

  If moreover $2/\beta < \delta$, then there exists an interval $J$
  of length $1/\beta$, such that $T_a (J) = [0,1)$ and $J \subset
    (\frac{1-\delta}{2},\frac{1+\delta}{2})$.

  The conditions $ \delta = 2 (a - [a]) - 1 > 0$ and $2/\beta <
  \delta$ can be written as $2 a (a - [a]) - 2 - \beta > 0$.

  Now, Corollary~1 of \cite{Persson} proves that $X(a)$ is typical for
  almost all $a$, and that Assumption~\ref{ass:comparablederivatives}
  holds.

  Theorem~\ref{the:main} now finishes the proof: For the parameters we
  are considering, the support of $\mu_a$ is $[0,1]$, see G\'{o}ra
  \cite{Gora}. The reason that we can consider any $y \in [0,1]$ is
  explained further in Section~\ref{sssec:assbeta}.
\end{proof}

For instance, the condition is satisfied for $[a_0,a_1] \subset
(\frac{5+\sqrt{41}}{4},3)$, but it is not satisfied for any value of
$a_0$ smaller than $\frac{5+\sqrt{41}}{4}$.

\subsection{Tent maps}

For $\alpha, \beta > 1$, we define the tent map $T_{\alpha, \beta}
\colon \mathbb{R} \to \mathbb{R}$ by
\[
T_{\alpha, \beta} (x) = \left\{ \begin{array}{ll} 1 + \alpha x &
  \text{if } x \leq 0, \\ 1 - \beta x & \text{if } x > 0.
  \end{array} \right.
\]
Let $\alpha, \beta \colon [a_0,a_1] \to (1, \infty)$ be two
non-decreasing $C^1$-functions such that for all $a \in [a_0,a_1]$
holds
\begin{align*}
  &\alpha' (a) > 0 \text{ for all } a \in [a_0, a_1] \qquad \text{or}
  \qquad \alpha(a_0) = \alpha(a_1),\\ &\beta' (a) > 0 \text{ for all }
  a \in [a_0, a_1] \qquad \text{or} \qquad \beta(a_0) = \beta(a_1)
\end{align*}
and
\[
\frac{1}{\alpha(a)} + \frac{1}{\beta(a)} \geq 1.
\]
We assume also that $(\alpha(a_0),\beta(a_0)) \neq
(\alpha(a_1),\beta(a_1))$, and consider the family $T_a = T_{\alpha
  (a), \beta(a)} \colon [T_{\alpha (a), \beta (a)} (1), 1] \to
[T_{\alpha (a), \beta (a)} (1), 1]$. For each $a$ there is a unique
$T_{\alpha(a), \beta(a)}$-invariant measure $\mu_a$ which is
absolutely continuous with respect to Lebesgue measure. Using results
of Schnellmann \cite[Section~7]{Schnellmann}, we can now state the
following corollary of Theorem~\ref{the:main}.

\begin{corollary}
  Let $\alpha > 0$. Then
  \[
  \frac{1}{1 + \alpha} \leq \dimH \{\, a \in [a_0,a_1] : |T_a^n (0) -
  y| \leq e^{-\alpha S_n \log |T_a'|} \text{ i.o.} \, \} \leq s_0
  \]
  holds for an open and non-empty set of $y$, where $s_0$ is the root
  of the pressure $P(-s (1 + \alpha) \log |T_a'|)$.
\end{corollary}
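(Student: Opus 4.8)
The plan is to verify that the tent-map family satisfies all of Assumptions~\ref{ass:discontinuities}--\ref{ass:density} together with the almost-sure typicality of $X(a) = 0$, and then invoke Theorem~\ref{the:main} directly. First I would set up the symbolic/geometric picture: for each $a$, the map $T_a = T_{\alpha(a),\beta(a)}$ restricted to the dynamical interval $[T_a(1),1]$ has a single turning point at $0$, so the two branches are affine with slopes $\alpha(a)$ and $-\beta(a)$; the condition $\frac{1}{\alpha(a)}+\frac{1}{\beta(a)} \ge 1$ is exactly what guarantees that the interval $[T_a(1),1]$ is mapped into itself (equivalently, the critical orbit does not escape), so that $T_a$ is a well-defined piecewise expanding map on a fixed-length-bounded interval. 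Assumption~\ref{ass:discontinuities} holds with $p=2$ and $b_1(a) = 0$ (after rescaling the dynamical interval to $[0,1]$, which is harmless and only changes constants); the branch functions are affine, hence trivially $C^{1+\mathrm{Lip}}$ and extend to a neighbourhood. Assumption~\ref{ass:expanding} holds with $\lambda = \min(\inf\alpha,\inf\beta) > 1$ and $\Lambda = \max(\sup\alpha,\sup\beta)$, and $L = 0$ since the derivatives are locally constant. Assumption~\ref{ass:parameterdependence} is immediate from the $C^1$-dependence of $\alpha,\beta$ on $a$.

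Next I would import the two substantive inputs from Schnellmann~\cite[Section~7]{Schnellmann}. The monotonicity hypotheses on $\alpha$ and $\beta$ (each either strictly increasing or constant, and not both constant on the whole interval) are precisely the transversality-type conditions under which Schnellmann proves that the turning-point orbit $\xi_n(a) = T_a^n(0)$ is typical with respect to $(T_a,\mu_a)$ for Lebesgue-a.e.\ $a$, and moreover that the derivative-comparison estimate \eqref{eq:derivatives} of Assumption~\ref{ass:comparablederivatives} holds with $X(a)\equiv 0$. For Assumption~\ref{ass:density}, I would cite the explicit description of the absolutely continuous invariant measure for tent maps (the density is a finite sum of indicator functions of intervals determined by the turning-point orbit), which is uniformly bounded above and below on its support on any compact parameter interval; the support $\supp\mu_a$ is itself an interval, and by the transversality one can extract (exactly as in Section~\ref{sssec:assbeta} for the $\beta$-map corollaries) an open interval $S$ lying in $\supp\mu_a$ for every $a\in[a_0,a_1]$ — this is where the hypothesis $(\alpha(a_0),\beta(a_0))\neq(\alpha(a_1),\beta(a_1))$ is used, to rule out a degenerate constant family.

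With all assumptions in place, Theorem~\ref{the:main} applies verbatim with $X(a)=0$: the upper bound $\dimH(\cdots)\le s_0$ holds for every $y$, where $s_0$ is the root of $s\mapsto P(-s(1+\alpha)\log|T_a'|,[a_0,a_1])$, and the lower bound $\frac{1}{1+\alpha}\le\dimH(\cdots)$ holds for every $y$ in the open dense subset $S_0\subset S$ furnished by the theorem; taking this $S_0$ as the advertised open non-empty set of $y$ completes the argument.

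The main obstacle I anticipate is not any single estimate but the bookkeeping around the dynamical interval and the verification of Assumption~\ref{ass:comparablederivatives}: one must check that Schnellmann's hypotheses in his tent-map section line up exactly with the monotonicity and slope conditions stated here, and in particular that the case where one of $\alpha,\beta$ is constant (so the parameter moves only through the other slope) is still covered by his transversality argument — this is the analogue of the $X'\ge 0$ versus $X'>0$ dichotomy in Theorem~\ref{the:schnellmann_beta}. A secondary, more routine point is confirming that rescaling the non-canonical dynamical interval $[T_a(1),1]$ to $[0,1]$ (so as to match the $T_a\colon[0,1]\to[0,1]$ setup of Theorem~\ref{the:main}) does not disturb any of the constants $c,\tau,L,\lambda,\Lambda$ in a parameter-dependent way, which follows because $|T_a(1)-1|$ is bounded away from $0$ and $1$ uniformly in $a$ on the compact interval $[a_0,a_1]$.
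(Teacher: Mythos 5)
Your proposal follows essentially the same route as the paper: the paper's proof likewise notes that the restriction to the dynamical interval $[T_{\alpha(a),\beta(a)}(1),1]$ is only a harmless coordinate change, cites Schnellmann (Sections~6--7) for the absolutely continuous invariant measure and the remaining assumptions, cites his Theorem~7.1 for the almost-sure typicality of the turning point $0$, and then applies Theorem~\ref{the:main}, with the open non-empty set of $y$ being the open dense subset $S_0\subset S$ furnished by that theorem. Your more detailed verification of Assumptions~\ref{ass:discontinuities}--\ref{ass:density} is consistent with what the paper delegates to Schnellmann, so the argument is correct and not a genuinely different proof.
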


\begin{proof}
  We consider
  \[
  T_a = T_{\alpha (a), \beta(a)} \colon [T_{\alpha(a),
    \beta(a)} (1), 1] \to [T_{\alpha(a), \beta(a)} (1), 1].
  \]
  That $T_a$ is not defined as a map from $[0,1]$ to itself, is only a
  matter of a coordinate change, and is unimportant. There is a unique
  invariant measure $\mu_a$ that is absolutely continuous with respect
  to Lebesgue, and satisfies all required assumptions, see Schnellmann
  \cite[Section~6--7]{Schnellmann}.

  By Schnellmann \cite[Theorem~7.1]{Schnellmann}, the point $0$ is
  typical for almost all parameters. The corollary follows from
  Theorem~\ref{the:main}.
\end{proof}

\subsection{Families of Markov maps} \label{ssec:markov}

Schnellmann has proved that if $T_a$ is a family of Markov maps, then
$X(a)$ is typical for almost all $a$ \cite[Theorem~8.1]{Schnellmann}.
We say that $T_a$ is a family Markov maps, if for each $a$, the map
$T_a$ is Markov with respect to the partition $0 = b_0 (a) < b_1 (a) <
\cdots < b_p (a) = 1$ mentioned in
Assumption~\ref{ass:discontinuities}. In this case we have the
following result.

\begin{corollary} \label{cor:markov}
  Suppose that $T_a$ is a family of Markov maps satisfying
  Assumptions~\ref{ass:discontinuities}, \ref{ass:expanding} and
  \ref{ass:parameterdependence}, and let $X$ be a $C^1$ map that
  satisfies Assumption~\ref{ass:comparablederivatives}. Then, whenever
  $S$ is an interval with $S \subset \supp \mu_a$ for every $a \in
  [a_0, a_1]$, we have
  \[
  \frac{1}{1 + \alpha} \leq \dimH \{\, a \in [a_0,a_1] : |T^n (X(a)) -
  y| \leq e^{-\alpha S_n \log |T_a'|} \text{ i.o.} \, \} \leq s_0
  \]
  for every $y \in S$, where $s_0$ is the root of the pressure
  function $P (- s (1+\alpha) \log |T_a'|)$.

  In particular, if $|T_a'|$ is constant for each $a$, then
  \[
  \dimH \{\, a \in [a_0,a_1] : |T^n (X(a)) - y| \leq e^{-\alpha S_n
    \log |T_a'|} \text{ i.o.} \, \} = \frac{1}{1 + \alpha}.
  \]
\end{corollary}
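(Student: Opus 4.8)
The plan is to verify that a family of Markov maps satisfying the listed hypotheses falls under the scope of Theorem~\ref{the:main}, and then to deduce the ``in particular'' statement from the general upper bound by the same pressure-function argument used in Corollary~\ref{cor:constantderivative}. First I would check Assumption~\ref{ass:density}: since each $T_a$ is a piecewise expanding Markov map, the theory of Markov maps (equivalently, a finite-state Gibbs--Markov shift coding) gives a unique absolutely continuous invariant measure $\mu_a$ whose density $\phi_a$ is of bounded variation and, on each element of the Markov partition that lies in $\supp\mu_a$, is bounded below and above by positive constants depending only on the distortion constant $L$ from Assumption~\ref{ass:expanding} and on $\lambda,\Lambda$; since $[a_0,a_1]$ is compact and $a\mapsto T_a$ is piecewise $C^1$ these constants can be chosen uniformly in $a$, which yields \eqref{eq:densitybound}. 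The existence of an open interval $S$ contained in every $\supp\mu_a$ is part of the hypothesis on $S$. Assumptions~\ref{ass:discontinuities}, \ref{ass:expanding}, \ref{ass:parameterdependence} and \ref{ass:comparablederivatives} are assumed outright, and the almost sure typicality of $X(a)$ is supplied by Schnellmann's Theorem~8.1 in \cite{Schnellmann} together with Assumption~\ref{ass:comparablederivatives}. Hence Theorem~\ref{the:main} applies and gives both the lower bound $1/(1+\alpha)$ and the upper bound $s_0$; the only refinement is replacing the ``open and dense $S_0\subset S$'' by all of $S$, which for Markov families follows because any interval is mapped by some iterate $T_a^N$ onto a full Markov branch, so every $y\in S$ has a preimage in any prescribed subinterval, exactly as in the deduction explained (for the $\beta$-transformation case) in Section~\ref{sssec:assbeta}; one then transfers the shrinking-target property from $y'$ to $y=T_a^N(y')$ by the mean value estimate $|T_a^{N+n}(X(a))-y| \le |(T_a^N)'(z_n)|\,e^{-\alpha S_n\log|T_a'|} \le \Lambda^N e^{-\alpha S_n\log|T_a'|}$, absorbing the bounded factor $\Lambda^N$ (which, as noted in the proof of the fixed-map corollary, does not affect the dimension).

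For the ``in particular'' clause, suppose $x\mapsto|T_a'(x)|$ is constant, say equal to $\beta(a)>1$, for each $a$. Then $S_n\log|T_a'|(a)=n\log\beta(a)$ is, up to the $\sup$ over the interval $I_n(a)$, independent of the orbit, and the potential $-s(1+\alpha)\log|T_a'|$ becomes the constant $-s(1+\alpha)\log\beta(a)$. The pressure in the Definition then reduces to
\[
P(-s(1+\alpha)\log|T_a'|) = \limsup_{n\to\infty}\frac1n\log\sum_{I_n(a)} e^{-s(1+\alpha)\,n\,\log\beta(a)},
\]
and since the number of intervals $I_n(a)$ grows like $e^{h n}$ with $h=\sup_a h_{\mu_a}=\sup_a\log\beta(a)$ (the growth rate of the number of monotonicity branches of $\xi_n$, controlled by $\lambda\le\beta(a)\le\Lambda$ and the Rokhlin formula \eqref{eq:Rokhlin}), the root $s_0$ of this pressure is precisely $s_0 = h\big/\big((1+\alpha)h\big) = 1/(1+\alpha)$. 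Combined with the lower bound from the first part, this forces the dimension to equal $1/(1+\alpha)$ for every $y\in S$.

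The main obstacle I anticipate is making the counting $\#\{I_n(a)\}\asymp e^{h n}$ rigorous and uniform in $a$ in the Markov setting, i.e.\ identifying the exponential growth rate of the maximal intervals of continuity of $\xi_n$ with the (sup over $a$ of the) metric entropy $h_{\mu_a}=\int\log|T_a'|\,\mathrm d\mu_a$. In the constant-derivative case this is the assertion that the topological pressure of the constant potential $-s(1+\alpha)\log\beta(a)$ has root $1/(1+\alpha)$, which depends on controlling how branches of $\xi_n$ proliferate as $a$ varies across $[a_0,a_1]$; the transitivity/Markov structure and Assumption~\ref{ass:comparablederivatives} (relating $\xi_n'$ to $(T_a^n)'(X(a))$, hence to $\beta(a)^n$) are exactly what is needed, but the bookkeeping — in particular ruling out that the $\limsup$ in the pressure is strictly smaller than $h$ because of intervals on which $\xi_n$ is not onto — is the delicate point, and is presumably where the detailed estimates of the general proof of Theorem~\ref{the:main} are invoked rather than reproved.
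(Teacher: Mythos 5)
Your reduction to Theorem~\ref{the:main} (verifying Assumption~\ref{ass:density} for Markov families, quoting Schnellmann's Theorem~8.1 for typicality) and your treatment of the ``in particular'' clause follow the paper: the constant-derivative computation is essentially the argument the paper gives for Corollary~\ref{cor:constantderivative}, including the caveat (which you flag) that the branch counting must be localised — the identity $s_0=1/(1+\alpha)$ only comes out after subdividing $[a_0,a_1]$ so that $\sup_a\log|T_a'|$ and $\inf_a\log|T_a'|$ are close; the root of the pressure over the whole parameter interval need not equal $1/(1+\alpha)$.

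The genuine gap is in the step that is actually the point of this corollary: upgrading ``open and dense $S_0\subset S$'' to ``every $y\in S$''. Your proposal is to pick $y'\in S_0$ with $T_a^N(y')=y$ and transfer the shrinking-target property via $|T_a^{N+n}(X(a))-T_a^N(y')|\le\Lambda^N e^{-\alpha S_n\log|T_a'|}$. That argument is the one used in the fixed-map corollary, where it works precisely because the family is constant in $a$, so a single $y'$ satisfies $T^N(y')=y$ for all parameters simultaneously. In the Markov-family setting the preimage of $y$ under $T_a^N$ depends on $a$: a $y'$ with $T_{a}^N(y')=y$ for one parameter satisfies $T_{\tilde a}^N(y')\ne y$ for nearby $\tilde a$, so the parameters in $E_A(y')$ are steered towards the moving target $T_a^N(y')$, not towards the fixed point $y$. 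Theorem~\ref{the:main} as stated does not cover parameter-dependent targets, so the transfer cannot be carried out this way. The paper's route is different: using that the partition points $b_i(a)$ can be assumed uniformly separated (after shrinking the parameter interval) and that $\xi_n$ is eventually piecewise monotone by Assumption~\ref{ass:comparablederivatives}, the Markov property forces every image $\xi_n(I_n(a))$ to be an interval of length at least a fixed $d>0$. This uniform largeness lets one redo the lower-bound (Cantor set) construction of Theorem~\ref{the:main} without the large-deviation input of Corollary~\ref{cor:largedev} and, crucially, without introducing the finite $\delta^2/4$-dense set $Y$ and the pigeonhole step that only delivers the bound for \emph{some} $y_j\in Y$; one builds the Cantor set directly inside $E(y)$ for the prescribed $y\in S$ (see Section~\ref{sssec:assbeta}). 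You would need this (or some substitute establishing a uniform lower bound on $|\xi_n(I_n(a))|$ and redoing the construction for the given $y$) to close the argument.
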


\begin{proof}
  As mentioned above, Schnellmann has proved that $X(a)$ is typical
  for almost every $a$. Moreover, Assumption~\ref{ass:density} holds
  in this case.

  We can now almost conclude the corollary, but note that we state the
  result for every $y \in S$, and not only for a dense and open
  subset.

  We have that $\xi_n (I_n(a))$ is uniformly large for every $I_n(a)$,
  which is proved as follows. By, if necessary, considering a smaller
  interval of parameters, we may assume that the endpoints of the
  partition elements, the points $b_i (a)$, are well separated, even
  for different parameters. More precisely, we may assume that there
  is a number $d > 0$ such that for any two parameters $a$ and
  $\tilde{a}$, the points $b_i (a)$ and $b_{i+1} (\tilde{a})$ are at
  least separated by a distance $d$.

  Now, since $T_a$ is a Markov map for each $a$ and since $\xi_n$ is
  piecewise monotone for large $n$ by
  Assumption~\ref{ass:comparablederivatives}, the set $\xi_n (I_n
  (a))$ is an interval of length at least $d$ if $n$ is large enough.
  Hence, there is a lower bound of the lengths of the intervals $\xi_n
  (I_n(a))$.

  So, there is a lower bound on the lengths of the intervals
  $\xi_n (I_n(a))$, which is a property that can be used to get an
  easier proof and a stronger result in Theorem~\ref{the:main}. We
  leave out the details here, and make comments on this in the
  corresponding part of the proof of Theorem~\ref{the:main}, see
  Sections~\ref{sssec:assbeta}.
\end{proof}

\section{Bounded distortion}

In the proof of Theorem~\ref{the:main}, we shall make use of the
following standard bounded distortion estimate. The constant $c$ in
\eqref{eq:derivatives} can be chosen so that for any $a \in [a_0,
  a_1]$, if $I$ is an interval on which $T_a^n$ is continuous
then
\begin{equation} \label{eq:boundeddistortion}
  \biggl| \frac{(T_a^n)' (x)}{(T_a^n)' (y)} \biggr| < c, \qquad x,y
  \in I.
\end{equation}
We shall also need a more general version of bounded distortion to
compare the derivatives for different parameters, see Lemma~4.1 of
\cite{Schnellmann}. A consequence of this lemma is that if $I$ is an
interval on which $\xi_n \colon a \mapsto T_a^n (X(a))$ is continuous,
then
\begin{equation} \label{eq:boundeddistortion2}
  \biggl| \frac{\xi_n' (a)}{\xi_n' (b)} \biggr| < c, \qquad a, b \in
  I.
\end{equation}

\section{Proof of the Upper Bound} \label{sec:upper}

In this section we prove the upper bound of the dimension in
Theorem~\ref{the:main}, as well as
Corollary~\ref{cor:constantderivative}.

Take $s$ such that $P (- s (1 + \alpha) \log |T_a'|) < 0$. We
need to show that the dimension is not larger than $s$.

For any $n > 0$, let $\{I_{n,k}\}$ be the partition of $[a_0,a_1]$
into maximal intervals on which $\xi_n \colon a \mapsto T_a^n (X(a))$
is continuous. For each interval $I_{n,k}$ there is a possibly empty
maximal sub-interval $\hat{I}_{n,k} \subset I_{n,k}$ such that $\xi_n
(\hat{I}_{n,k}) \subset [y - e^{- \alpha S_n \log |T_a'|}, y + e^{-
    \alpha S_n \log |T_a'|}]$. With this notation we have
\[
E = \{\, a \in [a_0, a_1] : |T_a^n (X(a)) - y| < e^{- \alpha S_n \log
  |T_a'|} \text{ i.o.} \,\} \subset \limsup_{n \to \infty} \bigcup_{k}
\hat{I}_{n,k}.
\]

By bounded distortion, the derivative of $\xi_n$ is essentially
constant on $\hat{I}_{n,k}$, and moreover $|\xi_n (\hat{I}_{n,k})|
\leq 2 e^{- \alpha S_n \log |T_a'|}$.  We may therefore estimate that
\begin{equation} \label{eq:lengthofsubinterval}
  |\hat{I}_{n,k}| \leq c' e^{- (1 + \alpha) S_n \log |T_a'|},
\end{equation}
where $c'$ is a constant.

For any $m$, the set $E$ is covered by the sets $\hat{I}_{n,k}$, $n >
m$. Therefore, if we can show that for some $s$,
\[
\sum_{n = 1}^\infty \sum_{k} |\hat{I}_{n,k}|^s < \infty
\]
then it follows that the Hausdorff dimension of $E$ is at most $s$.
We have by \eqref{eq:lengthofsubinterval} that
\[
  \sum_{n = 1}^\infty \sum_{k} |\hat{I}_{n,k}|^s \leq \sum_{n =
    1}^\infty \sum_{k} c'^s e^{- s (1 + \alpha) S_n \log |T_a'|}.
\]
By the definition of the pressure, it now follows that the dimension
of $E$ is at most $s$.

\subsection{Proof of Corollary~\ref{cor:constantderivative}}

Assume that $|T_a'|$ is constant. Then the topological entropy of
$T_a$ is $\log |T_a'|$. We can then conclude that for any $\varepsilon
> 0$, the number of partition elements $I_n (a)$ is bounded by
$e^{(h_+ + \varepsilon) n}$ for large $n$, where $h_+ = \sup_a \log
|T_a'|$.  Let $h_- = \inf_a \log |T_a'|$. Then
\[
\sum_{I_n (a)} e^{- s (1 + \alpha) S_n \log |T_a'|} \leq e^{(h_+ +
  \varepsilon) n} e^{- s (1 + \alpha) h_- n} = e^{(h_+ + \varepsilon -
  s (1 + \alpha) h_-) n}.
\]
This shows that the root of the pressure is at most
\[
\frac{h_+}{(1 + \alpha) h_-}
\]
since $\varepsilon$ is arbitrary. By partitioning the parameter space
$[a_0, a_1]$ into small pieces, we can then conclude that the
Hausdorff dimension is at most $1/(1 + \alpha)$.

\section{Asymptotic behaviour}

In this section, we assume that the
Assumptions~\ref{ass:discontinuities}--\ref{ass:density} hold, and
that $X(a)$ is typical for a.e.\ $a \in [a_0, a_1]$.

Recall that $S$ is such that $S \subset \supp \mu_a$ for every $a \in
[a_0, a_1]$. Whenever $y \in S$, we have by
Assumption~\ref{ass:density} that \[ \inf_{a \in [a_0, a_1]} \mu_{a}
(B(y,l)) \geq \tau l > 0, \] holds for small $l > 0$. This will let us
conclude the following lemma. Let $\lambda$ denote Lebesgue measure.

\begin{lemma} \label{lem:images}
  Let $l > 0$ be small, $\iota > 0$ and suppose that $y \in S$. For
  any subset $\Lambda \subset [a_0, a_1]$ there is an increasing
  sequence of numbers $n_k$ such that
  \[
  \lambda \{\, a \in \Lambda : T_a^{n_k} (X(a)) \in B(y,l) \,\}
  \geq \frac{\tau l}{2} |\Lambda|, \qquad k = 1, 2, 3, \ldots
  \]
  Moreover, the frequency
  \[
  f(n) = \frac{1}{\iota n} \# \{\, k : n \leq n_k \leq (1 + \iota) n
  \,\}
  \]
  has the property that
  \[
  f (n) \geq \frac{\tau l}{4},
  \]
  for sufficiently large $n$.
\end{lemma}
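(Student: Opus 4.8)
The plan is to exploit the assumed typicality of $X(a)$ for a.e.\ $a$ together with the uniform lower bound $\mu_a(B(y,l)) \geq \tau l$ that holds for every $a \in [a_0,a_1]$ when $y \in S$. First I would fix a continuous function $f$ with $\mathbf{1}_{B(y,l/2)} \leq f \leq \mathbf{1}_{B(y,l)}$, so that $\int f \, \mathrm{d}\mu_a$ lies between $\tfrac{\tau l}{2}$ and $1$ for all $a$ (after shrinking $l$ if necessary so the ball is contained in $S$). By typicality, for a.e.\ $a$ we have $\frac{1}{n}\sum_{k=0}^{n-1} f(T_a^k(X(a))) \to \int f\,\mathrm{d}\mu_a \geq \tfrac{\tau l}{2}$. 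In particular, for a.e.\ $a \in \Lambda$, the set $\{ k : T_a^k(X(a)) \in B(y,l) \}$ has lower density at least $\tfrac{\tau l}{2}$.

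Next I would integrate this over $\Lambda$ using Fatou's lemma (or the dominated convergence theorem applied to the Cesàro averages, which are bounded by $1$): writing $g_n(a) = \frac{1}{n}\sum_{k=0}^{n-1} f(T_a^k(X(a)))$, we get $\liminf_{n\to\infty} \int_\Lambda g_n \, \mathrm{d}\lambda \geq \int_\Lambda \liminf_n g_n \, \mathrm{d}\lambda \geq \tfrac{\tau l}{2}|\Lambda|$. Since $g_n(a) = \frac{1}{n}\sum_{k=0}^{n-1} \lambda\text{-density contribution}$, Fubini gives $\int_\Lambda g_n\,\mathrm{d}\lambda = \frac{1}{n}\sum_{k=0}^{n-1} \lambda\{a \in \Lambda : T_a^k(X(a)) \in \text{(support of the $k$-th term)}\}$, which is bounded above by $\max_{0 \le k < n} \lambda\{a \in \Lambda : T_a^k(X(a)) \in B(y,l)\}$ times a factor; more precisely the average of the quantities $\lambda\{a \in \Lambda : T_a^k(X(a)) \in B(y,l)\}$ over $0 \le k < n$ is at least (a bit less than) $\tfrac{\tau l}{2}|\Lambda|$ for infinitely many $n$. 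From this one extracts: for each $n$ in a suitable infinite set there must be some $k = k(n) < n$ with $\lambda\{a \in \Lambda : T_a^{k}(X(a)) \in B(y,l)\} \geq \tfrac{\tau l}{2}|\Lambda|$ (or a slightly weaker constant which we absorb), and letting $n$ run we obtain an increasing sequence $n_k$ with $\lambda\{a \in \Lambda : T_a^{n_k}(X(a)) \in B(y,l)\} \geq \tfrac{\tau l}{2}|\Lambda|$. A cleaner route for the first assertion: define $A_k = \{ a \in \Lambda : T_a^k(X(a)) \in B(y,l)\}$; then $\frac{1}{n}\sum_{k=0}^{n-1}\lambda(A_k) = \int_\Lambda g_n \,\mathrm{d}\lambda \to \tfrac{\tau l}{2}|\Lambda|$ along a subsequence, so $\sup_k \lambda(A_k) \geq \tfrac{\tau l}{2}|\Lambda|$, and since any single $k$ with large $\lambda(A_k)$ can be removed without changing the limit, infinitely many $k$ must have $\lambda(A_k) \geq \tfrac{\tau l}{2}|\Lambda| - \varepsilon$; absorbing $\varepsilon$ (or using $l' $ slightly larger) gives the claim with the constant $\tfrac{\tau l}{2}$ as stated.

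For the frequency statement, I would argue as follows. We have the Cesàro convergence $\frac{1}{n}\#\{k < n : a \in A_k\} \to c(a) := \int f\,\mathrm{d}\mu_a \geq \tfrac{\tau l}{2}$ for a.e.\ $a$. Fix $\iota > 0$. Then $\frac{1}{\iota n}\#\{k : n \le k \le (1+\iota)n, a \in A_k\} = \frac{(1+\iota)n}{\iota n}\cdot\frac{1}{(1+\iota)n}\#\{k \le (1+\iota)n : a \in A_k\} - \frac{1}{\iota}\cdot\frac{1}{n}\#\{k < n : a \in A_k\}$, which converges (for a.e.\ $a$) to $\frac{1+\iota}{\iota}c(a) - \frac{1}{\iota}c(a) = c(a) \geq \tfrac{\tau l}{2}$. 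Integrating over $\Lambda$ and applying Fatou again, $\liminf_n \frac{1}{|\Lambda|}\int_\Lambda \frac{1}{\iota n}\#\{k : n \le k \le (1+\iota)n, a \in A_k\}\,\mathrm{d}\lambda \geq \tfrac{\tau l}{2}$; by Fubini the integrand equals $\frac{1}{\iota n}\sum_{n \le k \le (1+\iota)n}\frac{\lambda(A_k)}{|\Lambda|}$. Now $n_k$ is, by the first part, the increasing enumeration of those indices $k$ for which $\lambda(A_k) \geq \tfrac{\tau l}{2}|\Lambda|$, but in fact a minor bookkeeping adjustment is needed here: I should define $n_k$ to be the enumeration of the indices where $\lambda(A_k)$ is reasonably large and show that the density of such indices in dyadic-type windows $[n,(1+\iota)n]$ is bounded below. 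Concretely, splitting each window's average $\frac{1}{\iota n}\sum_{n \le k \le (1+\iota)n}\lambda(A_k)/|\Lambda|$, which is $\geq \tfrac{\tau l}{2} - o(1)$, into the indices with $\lambda(A_k)/|\Lambda| \geq \tfrac{\tau l}{4}$ (contributing at most $1$ each) and those below (contributing at most $\tfrac{\tau l}{4}$ each), one gets that the proportion of indices $k$ in $[n,(1+\iota)n]$ with $\lambda(A_k)/|\Lambda| \geq \tfrac{\tau l}{4}$ is at least $\tfrac{\tau l}{4} - o(1)$, hence $\geq \tfrac{\tau l}{4}$ for $n$ large; redefining $n_k$ as the enumeration of these indices gives both $\lambda(A_{n_k}) \geq \tfrac{\tau l}{4}|\Lambda|$ (one would then state the first part with constant $\tfrac{\tau l}{4}$, or re-run the argument with $l$ replaced by $2l$) and $f(n) \geq \tfrac{\tau l}{4}$.

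The main obstacle I anticipate is the passage from a.e.-pointwise statements (valid for a.e.\ $a$, via typicality and Birkhoff-type arguments) to the uniform measure-theoretic lower bounds on $\lambda(A_{n_k})$ and on the window-frequencies: this requires interchanging $\liminf$ and integration, which Fatou handles, but it also requires the slightly delicate combinatorial argument that an average bounded below by $\tfrac{\tau l}{2}$ of quantities in $[0,1]$ forces a definite proportion of them to be bounded below by $\tfrac{\tau l}{4}$ — and one must be careful that the ``bad'' subsequence on which the averages converge (rather than merely $\liminf$) is chosen consistently for both parts of the lemma. I would also need to verify that the null set of ``bad'' parameters coming from typicality does not interfere, which is immediate since it has Lebesgue measure zero and we only integrate over $\Lambda$ with respect to $\lambda$. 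The constants $\tfrac{\tau l}{2}$ and $\tfrac{\tau l}{4}$ in the statement are chosen with exactly this slack in mind.
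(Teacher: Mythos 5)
Your proposal follows essentially the same route as the paper: typicality plus dominated convergence/Fatou and Fubini convert the pointwise Birkhoff statement into a lower bound on the Ces\`aro averages of $\lambda(A_k)$, infinitely many good times are extracted from that, and the same large/small splitting of $\lambda(A_k)$ over the windows $[n,(1+\iota)n]$ gives the frequency bound (the paper applies the convergence directly to the indicator $\chi_{B(y,l)}$ and reserves itself the phrase ``if $l$ is sufficiently small'' at the end --- exactly the slack you anticipate). One small caution: taking the continuous minorant to equal $1$ only on $B(y,l/2)$ makes your limiting constant coincide with the threshold $\tau l/2$, so the extraction of infinitely many $k$ with $\lambda(A_k)\ge \tfrac{\tau l}{2}|\Lambda|$ would fail by an $\varepsilon$; choose it equal to $1$ on $B(y,(1-\epsilon)l)$ (or note that $\partial B(y,l)$ is $\mu_a$-null, so typicality applies to the indicator itself, as the paper implicitly does), and then the stated constants $\tfrac{\tau l}{2}$ and $\tfrac{\tau l}{4}$ come out without having to redefine $n_k$ at the weaker threshold.
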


\begin{proof}
  For any $k$, the function
  \[
  a \mapsto \chi_{B(y,l)} (T_a^k (X(a)))
  \]
  is non-negative and bounded by one. Since $X(a)$ is typical with
  respect to $(T_a, \mu_a)$ for almost all $a \in [a_0, a_1]$
  according to the assumption of Theorem~\ref{the:main}, we have for
  almost all $a \in [a_0, a_1]$ that
  \[
  \frac{1}{n} \sum_{k=1}^n \chi_{B(y,l)} (T_a^k (X(a))) \to \mu_a
  (B(y,l)), \qquad n \to \infty,
  \]
  and by the dominated convergence theorem,
  \[
  \int_\Lambda \frac{1}{n} \sum_{k=1}^n \chi_{B(y,l)} (T_a^k
  (X(a))) \, \mathrm{d}a \to \int_\Lambda \mu_a (B(y,l)) \,
  \mathrm{d}a.
  \]

  Since $\mu_a (B(y,l)) \geq \tau l$ we now get that
  \begin{multline*}
    \frac{1}{n} \sum_{k=1}^n \lambda \{\, a \in \Lambda : T_a^k (X(a))
    \in B(y,l) \,\} \\ = \int_\Lambda \frac{1}{n} \sum_{k=1}^n
    \chi_{B(y,l)} (T_a^k (X(a))) \, \mathrm{d}a \to \int_\Lambda
    \mu_a (B(y,l)) \, \mathrm{d}a \geq \tau l |\Lambda|,
  \end{multline*}
  as $n \to \infty$. Hence, the first part of the lemma follows.

  It follows from above that
  \[
  \sum_{k=n}^{(1 + \iota) n} \lambda \{\, a \in \Lambda : T_a^k (X(a))
  \in B(y,l) \,\} \geq \frac{7}{8} \tau l |\Lambda| \iota n
  \]
  if $n$ is large enough.

  When $k$ is such that
  \[
  \lambda \{\, a \in \Lambda : T_a^k (X(a)) \in B(y,l) \,\} \geq
  \frac{\tau l}{2} |\Lambda|,
  \]
  we use the trivial  estimate
  \[
  \lambda \{\, a \in \Lambda : T_a^k (X(a))
  \in B(y,l) \,\} \leq |\Lambda|.
  \]
  Otherwise, we use the estimate
  \[
  \lambda \{\, a \in \Lambda : T_a^k (X(a)) \in B(y,l) \,\} \leq
  \frac{\tau l}{2} |\Lambda|.
  \]
  Hence, by the definition of $f(n)$, we have
  \[
  \sum_{k = n}^{(1 + \iota) n} \lambda \{\, a \in \Lambda : T_a^k
  (X(a)) \in B(y,l) \,\} \leq f(n) \iota n |\Lambda| + (\iota n - f(n)
  \iota n) \frac{\tau l}{2} | \Lambda |.
  \]
  Combining these two estimates, it follows that
  \[
  f (n) \iota n (|\Lambda| - \frac{\tau l}{2} |\Lambda|) \geq \frac{3
    \tau l}{4} |\Lambda| \iota n.
  \]
  This proves the desired estimate if $l$ is sufficiently small.
\end{proof}

We now prove the following lemma, concerning the asymptotic growth of
the derivative of $T_a^n$ and the entropy of the measure $\mu_a$ for
typical $a$.

\begin{lemma} \label{lem:entropy}
  For any subset $\Lambda_0 \subset [a_0, a_1]$ and any $\varepsilon >
  0$, there is a set $\Lambda \subset \Lambda_0$ and a number $N$ such
  that $\lambda(\Lambda_0 \setminus \Lambda) \leq \varepsilon
  \lambda(\Lambda_0)$ and
  \[
  h_{\mu_a} - \varepsilon < \frac{1}{n} \log |(T_a^n)' (X(a))| <
  h_{\mu_a} + \varepsilon, \qquad n > N,\ a \in \Lambda.
  \]
\end{lemma}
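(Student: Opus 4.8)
The plan is to read $\frac{1}{n}\log|(T_a^n)'(X(a))|$ as the Birkhoff average of the potential $\log|T_a'|$ along the orbit of $X(a)$,
\[
\frac{1}{n}\log|(T_a^n)'(X(a))| = \frac{1}{n}\sum_{k=0}^{n-1}\log\bigl|T_a'\bigl(T_a^k(X(a))\bigr)\bigr|,
\]
and to combine the assumed typicality of $X(a)$ with the Rokhlin formula \eqref{eq:Rokhlin}, which gives $\int\log|T_a'|\,\mathrm{d}\mu_a = h_{\mu_a}$. If $\log|T_a'|$ were continuous this would be immediate from the definition of typicality; the only real point is that $\log|T_a'|$ jumps at the finitely many partition endpoints $b_0(a),\dots,b_p(a)$, whereas typicality is assumed only against continuous test functions.

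First I would fix a parameter $a$ in the full-measure set $G\subset[a_0,a_1]$ for which $X(a)$ is typical with respect to $(T_a,\mu_a)$; for such $a$ the orbit of $X(a)$ is in particular defined for all times and avoids $\{b_i(a)\}$. Given $\delta>0$, since $\mu_a$ has no atoms, I can choose continuous functions $g_-\le\log|T_a'|\le g_+$ on $[0,1]$ obtained by interpolating across neighbourhoods of the points $b_i(a)$ of total Lebesgue length $\rho$: by Assumption~\ref{ass:expanding} the oscillation of $\log|T_a'|$ is at most $\log(\Lambda/\lambda)$ and by \eqref{eq:densitybound} the $\mu_a$-measure of those neighbourhoods is at most $\rho/\tau$, so for $\rho$ small enough $\int(g_+-g_-)\,\mathrm{d}\mu_a<\delta$, and hence, using \eqref{eq:Rokhlin},
\[
h_{\mu_a}-\delta<\int g_-\,\mathrm{d}\mu_a\le\int g_+\,\mathrm{d}\mu_a<h_{\mu_a}+\delta.
\]
Sandwiching $g_-\le\log|T_a'|\le g_+$ along the orbit and applying typicality to the continuous functions $g_\pm$ gives
\[
h_{\mu_a}-\delta\le\liminf_{n\to\infty}\frac{1}{n}\log|(T_a^n)'(X(a))|\le\limsup_{n\to\infty}\frac{1}{n}\log|(T_a^n)'(X(a))|\le h_{\mu_a}+\delta.
\]
As $\delta>0$ is arbitrary, $\frac{1}{n}\log|(T_a^n)'(X(a))|\to h_{\mu_a}$ for every $a\in G$. (Equivalently, one may note that typicality says the empirical measures $\frac{1}{n}\sum_{k=0}^{n-1}\delta_{T_a^k(X(a))}$ converge weakly to $\mu_a$, and apply the portmanteau theorem to the bounded, $\mu_a$-a.e.\ continuous function $\log|T_a'|$.)

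Finally I would upgrade this pointwise convergence to the uniform statement by Egorov's theorem: the functions $a\mapsto\frac{1}{n}\log|(T_a^n)'(X(a))|$ are measurable (the piecewise $C^1$ structure from Assumptions~\ref{ass:discontinuities}--\ref{ass:parameterdependence} makes $a\mapsto T_a^k(X(a))$ and $a\mapsto(T_a^n)'(X(a))$ measurable), are bounded (by $\log\Lambda$), and converge pointwise on $\Lambda_0\cap G$, which has full measure in $\Lambda_0$, to the measurable and finite limit $a\mapsto h_{\mu_a}$; hence there is a subset $\Lambda\subset\Lambda_0$ with $\lambda(\Lambda_0\setminus\Lambda)\le\varepsilon\lambda(\Lambda_0)$ and a number $N$ such that $\bigl|\frac{1}{n}\log|(T_a^n)'(X(a))|-h_{\mu_a}\bigr|<\varepsilon$ for all $n>N$ and all $a\in\Lambda$, which is the assertion of the lemma. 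The main (and essentially only) obstacle is the discontinuity of $\log|T_a'|$ at the partition endpoints, resolved by the continuous sandwich above; the rest — the Birkhoff-sum identity, the Rokhlin formula, and the Egorov extraction — is routine.
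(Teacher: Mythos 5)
Your proof is correct and follows essentially the same route as the paper's: pointwise convergence of $\frac{1}{n}\log|(T_a^n)'(X(a))|$ to $h_{\mu_a}$ for a.e.\ $a$ via typicality of $X(a)$ and the Rokhlin formula \eqref{eq:Rokhlin}, followed by an Egorov-type extraction of the set $\Lambda$ and the number $N$ (which the paper leaves implicit in ``the lemma follows''). The only difference is that you explicitly handle the discontinuity of $\log|T_a'|$ at the partition points $b_i(a)$ by a continuous sandwich using \eqref{eq:densitybound}, a detail the paper's appeal to typicality/Birkhoff glosses over; this is a refinement of the same argument, not a different method.
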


\begin{proof}
  This is a consequence of the Rokhlin formula \eqref{eq:Rokhlin} and
  the fact that $X(a)$ is typical for $(T_a,\mu_a)$ for almost all
  $a$.

  If $a$ is such that $X(a)$ is typical, then
  \begin{multline*}
    \frac{1}{n} \log |(T_a^n)' (X(a))| = \frac{1}{n} \sum_{k=0}^{n-1}
    \log |T_a' (T_a^k (X(a)))| \\ \to \int \log |T_a'| \, \mathrm{d}
    \mu_a = h_{\mu_a}, \qquad n \to \infty,
  \end{multline*}
  by Birkhoff's Ergodic Theorem and the Rokhlin formula
  \eqref{eq:Rokhlin}. Since this convergence holds for almost all $a
  \in \Lambda_0$, the lemma follows.
\end{proof}

\section{Large deviations}

Again, throughout this section, we assume that the
Assumptions~\ref{ass:discontinuities}--\ref{ass:density} hold, and
that $X(a)$ is typical for a.e.\ $a \in [a_0, a_1]$.

We outline and develop the so called large deviation argument, first
invented by M.~Benedicks and L.~Carleson in \cite{BenedicksCarleson2},
also used in \cite{MA}.  In order to state the following lemmata we
need a couple of definitions. We recall that we write $I_n (a)$ for
the largest interval around the parameter $a$ such that $\xi_n(a)$ is
continuous on $I_n(a)$. Those intervals are also called {\em partition
  elements}, and we say that $I_n(a)$ is a partition element of {\em
  generation $n$}. In this section we will actually write $I_n (a)$
for the elements of a refined partition. If a partition element is too
long, then we partition it further into pieces of length $\delta \leq
|\xi_n(I_n (a))| \leq 4 c \max |T'_a| \delta$. The number $\delta$
should be chosen such that if $I_n(a)$ is a partition element such
that $\xi_n (I_n (a)) \leq 2 \delta$ then $I_n(a)$ contains at most
$2$ partition elements of generation $n+1$. In particular, $\xi_{n+1}$
is discontinuous on $I_n(a)$ in at most one point.

We say that $n$ is a {\em return time for the parameter} $a$ if
$\xi_{n}$ is not continuous on $I_{n-1}(a)$. In this case,
$\xi_n(I_{n-1}(a))$ has returned, or is a return. We say that $I_n
(a)$ (or $\xi_n(I_n(a))$) is in {\em escape position} if
$\xi_n(I_n(a))$ is of length at least $\delta$.

For the estimates to work below, we need to have $|T_a'| \geq e^5$
(slightly less than $150$). If this is not the case, replace $T_a$
with an iterate of $T_a$ such that this holds.

\begin{definition}[Escape time]
  We consider three cases: If $|\xi_\nu (I_\nu (a))| \geq \delta^2$
  and there exists a $p > 0$ such that $|\xi_{\nu+k}(I_{\nu+k}(a))|
  \geq \delta^2$ for all $0 \leq k < p$ and
  $|\xi_{\nu+p}(I_{\nu+p}(a))| \geq \delta$, then we set
  \[
  E(a,\nu) = 0.
  \]
  If $a$ is such that $|\xi_{\nu+k}(I_{\nu+k}(b))| < \delta$ for all
  $k > 0$, then we set $E(a,\nu)=\infty$. (The set of those parameters
  have measure zero, which can easily be proved using an argument by
  Hofbauer \cite[Lemma~13]{Hofbauer}.)

  In all other cases than the two mentioned above, we set
  \[
  E(a,\nu) = \min \{\, t > 0 : \xi_{\nu+t}(I_{\nu+t}(a)) \text{ is
    in escape position} \, \}.
  \]
  The number $E(a,\nu)$ is called the {\em escape time} of $a$ at
  $\nu$.
\end{definition}

Let $\iota > 0$. For a given parameter $a$, we set
\[
\Theta_n(a) =
\sum_{j=1}^{s(a)} E(a,\nu_j),
\]
where $E(a,\nu_j)$, $j=1,\ldots,s$ are the consecutive escape times
for $a$ in the time interval $[n,(1+\iota)n]$, and $\nu_j \in
[n,(1+\iota)n]$ are return times. With this we mean the following. We
always have $\nu_{j+1} \geq \nu_j + E(a,\nu_j)$, and $\nu_{j+1}$ is
the smallest such $\nu_{j+1}$ such that $\nu_{j+1}$ is a return time
for $a$ and $I_{\nu_{j+1}-1} (a)$ is in escape position.

If $I_n(a)$ is in escape position, very few parameters spend a big
portion of time escaping:

\begin{proposition} \label{largedev}
  If $\delta$ is small enough, then there exists a number $\tau_0 =
  \tau_0(\delta, i) \leq c_0 \iota \delta^{1/3}$, $0 < \tau_0 < 1$,
  such that for $\tau_0 < \tau_1 < 1$, whenever $I_n (a)$ is in escape
  position, we have
  \[
  \lambda (\{\, b \in I_n (a) : \Theta_n (b) \geq \tau_1 n \, \}) \leq
  e^{-(\tau_1-\tau_0)n}|I_n(a)|.
  \]
\end{proposition}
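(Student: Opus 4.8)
The plan is to prove Proposition~\ref{largedev} by a large-deviation argument of the Benedicks--Carleson type, bounding the measure of parameters with large cumulative escape time $\Theta_n$ via a Chebyshev/moment estimate combined with an inductive control of how escape intervals branch. First I would observe that the quantity $\Theta_n(b)$ is a sum $\sum_j E(b,\nu_j)$ over the consecutive returns $\nu_j \in [n,(1+\iota)n]$ in escape position, and that the total number of return times available in this window is at most $\iota n$, so $s(b) \le \iota n$ always. The essential mechanism is that after each return, the image interval $\xi_{\nu}(I_{\nu}(b))$ has length comparable to $\delta^2$ (or at least caught in the ``bound'' regime $|\xi_{\nu+k}(I_{\nu+k}(b))| < \delta$), and at each subsequent step the image grows by a definite factor $|T_a'| \ge e^5$ until it regains length $\delta$; hence an escape time of $t$ forces the image to have shrunk by roughly $e^{-5t}$ relative to escape size just before the return, so the corresponding subinterval of parameters has comparatively small measure. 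This is where the hypothesis $|T_a'| \ge e^5$ and the bounded-distortion estimates \eqref{eq:boundeddistortion} and \eqref{eq:boundeddistortion2} enter: they let us convert statements about the length of $\xi_m(I_m(b))$ into statements about the Lebesgue measure of the parameter interval $I_m(b)$ up to the uniform constant $c$, and they let us sum geometric series in $\delta$.

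The key steps, in order, are: (1) partition the parameter interval $I_n(a)$ according to the (finite) sequence of return times $\nu_1 < \nu_2 < \cdots < \nu_s$ in $[n,(1+\iota)n]$ and the values $E(a,\nu_j) = p_j$ of the escape times; each such combinatorial choice $(\nu_j, p_j)_j$ corresponds to a collection of subintervals of $I_n(a)$. (2) For a \emph{single} return followed by an escape of length $p$, show that the union of parameter subintervals realising this has measure at most $C \delta^{\kappa p} |J|$ relative to the parent interval $J$ in escape position, for some $\kappa > 0$ depending on $e^5$ (concretely, each step of not-yet-escaped evolution costs a factor like $e^{-c'}$ in relative length, while the number of children produced per generation is bounded because a piece of image-length $\le 2\delta$ contains at most two partition elements of the next generation). (3) Multiply these estimates over $j = 1, \ldots, s$ to get that a fixed itinerary $(\nu_j, p_j)$ occupies at most $\prod_j C\delta^{\kappa p_j}\,|I_n(a)| = C^s \delta^{\kappa \Theta_n}\,|I_n(a)|$ of $I_n(a)$. (4) Sum over all admissible itineraries with $\Theta_n \ge \tau_1 n$: the number of ways to choose the return positions and escape lengths is bounded combinatorially (stars-and-bars-type bounds give at most $e^{o(n)}$, or more carefully a bound like $\binom{(1+\iota)n}{s}\cdot(\text{compositions of }\Theta_n)$), and this is overwhelmed by the factor $\delta^{\kappa \tau_1 n}$ once $\delta$ is small. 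Collecting the exponential rates, choosing $\tau_0 = \tau_0(\delta,\iota)$ to absorb the combinatorial entropy and the constants $C^s$, and noting $\tau_0 \le c_0 \iota \delta^{1/3}$ because both the number of returns $s \le \iota n$ and the per-return loss scale with $\delta$, we arrive at the bound $\lambda(\{b \in I_n(a) : \Theta_n(b) \ge \tau_1 n\}) \le e^{-(\tau_1 - \tau_0)n}|I_n(a)|$.

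The main obstacle I expect is step (2), the ``one-return'' estimate: one must carefully track, through the distortion lemmas, that during an escape of length $p$ the parameter interval is genuinely compressed by an exponential factor in $p$, \emph{and} simultaneously control the branching — each generation an interval in the bound regime can split into at most two partition elements of the next generation, so the total number of length-$p$ escape branches emanating from one return is at most $2^p$, and we need $2^p \cdot \delta^{\kappa p}$ to still be summable, i.e.\ $\delta$ small enough that $\kappa|\log\delta| > \log 2$. Making the bookkeeping of ``return'' versus ``escape position'' versus the ``$\delta^2$-threshold'' regimes precise — in particular handling the three cases in the definition of escape time, and the possibility $E(a,\nu)=0$ or $E(a,\nu) = \infty$ (the latter a null set by the Hofbauer argument cited) — is the delicate part, but it is routine once the geometric picture above is set up. A secondary technical point is to ensure the constants coming from bounded distortion ($c$ in \eqref{eq:boundeddistortion}, \eqref{eq:boundeddistortion2}) and from $\max|T_a'| \le \Lambda$ are genuinely uniform in $a$ and $n$, which is exactly what Assumptions~\ref{ass:expanding} and \ref{ass:comparablederivatives} guarantee.
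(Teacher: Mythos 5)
Your overall plan (stratify $I_n(a)$ by the itinerary of return times and escape lengths, bound each stratum, then sum with a stars-and-bars count) is not the paper's argument, and as sketched it has a genuine gap precisely at the point that matters: the size of $\tau_0$. The proposition does not merely claim \emph{some} exponential bound; it claims $\tau_0\le c_0\iota\delta^{1/3}$, i.e.\ $\tau_0/\iota\to 0$ as $\delta\to 0$, and this smallness is what is used later (via \eqref{eq:sumoffrequencies}, one must be able to take $\tau_0<\tau_1<\iota\tau l/6$ with $l$ arbitrarily small, so a bound of the form $\tau_0\le c\,\iota$ with a $\delta$-independent constant $c$ is useless). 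In your step (4) the sum over itineraries pays a combinatorial entropy for the \emph{positions} of the returns in $[n,(1+\iota)n]$: with up to $s\sim\iota n$ returns this is of size $e^{c(\iota)n}$ with $c(\iota)\gtrsim\iota\log 2$ (indeed $\sim\iota\log(1/\iota)$), independent of $\delta$, so "absorbing the combinatorial entropy and the constants $C^s$ into $\tau_0$" forces $\tau_0/\iota$ to be bounded below by a fixed constant, not by $c_0\delta^{1/3}$. The assertion that "the per-return loss scales with $\delta$" is exactly what needs proof and is not delivered by your scheme: the $\delta$-smallness per return comes from the two-threshold structure in the definition of the escape time (only parameters whose interval drops below $\delta^2$ after a return from an escape position contribute $E>0$, and these form an $O(\delta)$-type proportion of the parent), and to avoid paying any entropy for the return times one has to organise the estimate hierarchically over the nested escape cylinders, so that the choice of the next return is paid by the measures of the subintervals themselves. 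That is what the paper does: Lemmas~\ref{lem:levelset}, \ref{growth1} and \ref{single-escape} give the one-return bound $\int_{I_{\nu-1}(a)}e^{E(b,\nu)}\,\mathrm{d}b\le(1+\eta(\delta))|I_{\nu-1}(a)|$ with $\eta(\delta)\le C_0\delta^{1/3}$, Lemma~\ref{escape-int} iterates it along the nested intervals to get $\int_{I_n(a)}e^{\Theta_n}\le e^{\tau_0 n}|I_n(a)|$ with $\tau_0=\iota\log(1+\eta(\delta))$, and the proposition follows from Markov's inequality. You mention a "Chebyshev/moment estimate" in your opening sentence, but none of your steps (1)--(4) actually implements it; implementing it is the way to close the gap.

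Your one-return estimate in step (2) is also misstated. The geometry does not give a stratum measure $C\delta^{\kappa p}|J|$ for an escape of length $p$: what the expansion $|T_a'|\ge e^5$ and bounded distortion give is a decay $e^{-(5-\log 2)p}$ (expansion beating the at-most-binary branching, as in Lemma~\ref{lem:levelset}, where the bound also carries the factor $e^{r}$ recording the depth of the return and so is \emph{not} uniform in the return depth), while the $\delta$-small factor is attached to the \emph{event} of falling below $\delta^2$, quantified relative to the escape-position parent through Lemma~\ref{growth1} (the $e^{-r_j}/(\delta|(T_b^{k_j+1})'|)$ factors in the proof of Lemma~\ref{single-escape}). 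Conflating these two mechanisms into a single $\delta^{\kappa p}$, and measuring the strata relative to $I_n(a)$ rather than relative to the correct escape-position ancestor, is where your bookkeeping would break down if carried out in detail; in particular parameters with $E=0$ (which may well fluctuate between $\delta^2$ and $\delta$ for a long time) must cost nothing at all, which your itinerary count does not reflect.
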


We will also need the following simple lemma.

\begin{lemma} \label{lem:deltaktodelta}
  Given $\delta > 0$ and a number $\delta_k \in (0,\delta)$, there are
  constants $c_1 > 0$ and $K > 0$ such that if $I \subset I_n(a)$ is
  an interval with $|\xi_n (I)| > \delta_k$, then there exists a $0 <
  k < K$ and a $b \in I$ such that $|\xi_{n+k} (I_{n+k} (b))| >
  \delta$ and
  \[
  \frac{|I_{n+k} (b)|}{|I|} > c_1.
  \]
\end{lemma}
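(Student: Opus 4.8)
The plan is to iterate the map $\xi_n$ starting from the interval $I$ and follow the images $\xi_{n+k}(I_{n+k}(b))$ for a suitable chosen point $b \in I$, using the uniform expansion (Assumption~\ref{ass:expanding}) together with bounded distortion \eqref{eq:boundeddistortion} and \eqref{eq:boundeddistortion2} to control the loss of relative length at the (few) discontinuities that $\xi_{n+k}$ can have on the shrinking intervals. First I would note that, by Assumption~\ref{ass:expanding} and \eqref{eq:derivatives}, the derivative $|\xi_{n+1}'|$ is, up to the fixed constant $c$, at least $\lambda > e^5 > 1$ times $|\xi_n'|$ (recall we have passed to an iterate so that $|T_a'| \geq e^5$). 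Hence as long as the relevant images stay below $\delta$ and $\xi_{n+k}$ is continuous on the interval we are following, the length $|\xi_{n+k}(\cdot)|$ grows by a definite factor $> 1$ at each step; so after at most $K = K(\delta, \delta_k)$ steps, where $K$ is of order $\log(\delta/\delta_k)/\log\lambda$, the image must exceed $\delta$ for \emph{some} descendant interval.

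The one subtlety, and the step I expect to be the main obstacle, is handling the discontinuities of $\xi_{n+k}$ inside the shrinking interval: when $\xi_{n+k}$ is discontinuous on $I_{n+k-1}(b)$ the interval splits into subintervals, and a priori the longest piece (in $\xi$-image) could be a small fraction of the whole, destroying the lower bound on $|I_{n+k}(b)|/|I|$. This is precisely why the refined partition in Section~6 was set up so that an element $I_m(a)$ with $|\xi_m(I_m(a))| \leq 2\delta$ contains at most two partition elements of generation $m+1$ — so $\xi_{m+1}$ is discontinuous in at most one point of it. Thus at each step the interval splits into at most two pieces; I pick $b$ inside the piece whose $\xi$-image is the larger of the (at most two) pieces, so that image has length at least half of the previous one. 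Combining the expansion factor $\lambda$ with this possible halving, the net growth per step is still bounded below by $\lambda/2 > e^5/2 > 1$, so after $K$ steps (with $K$ adjusted by the factor $\log(\lambda/2)$ in the denominator) we reach an image exceeding $\delta$.

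Finally I would track the relative length $|I_{n+k}(b)|/|I|$: by bounded distortion \eqref{eq:boundeddistortion2}, on each interval of continuity the ratio of $\xi$-lengths and the ratio of parameter-lengths agree up to the factor $c$, so
\[
\frac{|I_{n+k}(b)|}{|I|} \geq c^{-1} \, \frac{|\xi_{n+k}(I_{n+k}(b))|}{|\xi_n(I)|} \cdot \prod (\text{halving losses at the at most } K \text{ splits}) \geq c^{-1} \cdot \frac{\delta}{1} \cdot 2^{-K} =: c_1 > 0,
\]
where I used $|\xi_n(I)| \leq 1$ and that there are at most $K$ splits, each costing at most a factor $\tfrac12$. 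Since $\delta$, $\delta_k$, $c$, $\lambda$ are all fixed, $K$ and $c_1$ are absolute constants, which is exactly the claim. The only care needed is to make the selection of $b$ consistent across the $k$ steps — one does this by choosing a nested sequence of intervals $I = J_0 \supset J_1 \supset \cdots \supset J_k \ni b$, where at step $k$ one replaces $J_{k-1}$ by the sub-piece of maximal $\xi_{n+k}$-image — and then $b$ is any point of the final $J_k$.
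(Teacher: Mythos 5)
Your argument is essentially the paper's own proof: while the images stay below $2\delta$ each step cuts the interval into at most two pieces, so the factor $e^{5}$ of expansion beats the factor $2$ lost at a cut, forcing some descendant to have $\xi$-image exceeding $\delta$ within a time $K$ determined by $\delta/\delta_k$, and bounded distortion then gives the uniform lower bound on $|I_{n+k}(b)|/|I|$. The only cosmetic imprecision is your final displayed constant (the compensating factor for the extra $k$ iterates should come from the distortion and derivative bounds, e.g.\ a factor like $c^{-2}\Lambda^{-K}$, rather than literally $2^{-K}$), but this does not affect the existence of $c_1>0$ depending only on $\delta$ and $\delta_k$.
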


\begin{proof}
  As long as $|\xi_{n+k} (I_{n+k} (b))| < 2 \delta$ for all $b \in I_n
  (a)$, there are at most $2^k$ different partition elements of
  generation $n+k$ inside $I$, one of which is at least $2^{-k}$
  times as long as $I_n (a)$, and which we denote by $I_{n+k} (b)$. It
  follows that
  \[
  |\xi_{n+k} (I_{n+k} (b))| \geq c 2^{-k} e^{5k} \delta_k.
  \]
  Hence, within a time $K$, depending only on $\delta$, we will have a
  piece with $|\xi_{n+k} (I_{n+k} (b))| > \delta$, $0 < k < K$, and it
  is clear that the partition element $I_{n+k} (b)$ will make up a
  proportion of $I_n (a)$ that is bounded away from $0$.
\end{proof}

To prove Proposition~\ref{largedev}, we need the following lemma.

\begin{lemma} \label{lem:levelset}
  Suppose that $\nu$ is a return time for $a$ and that
  $|\xi_{\nu}(I_{\nu}(a))| = e^{-r} \leq \delta$. Then for some
  constant $C > 0$, we have
  \[
  \lambda(\{\, b \in I_\nu (a) : E(b,\nu) = t \,\}) \leq C \delta e^{r
    - t(5- \log 2)} |I_\nu (a)|.
  \]
\end{lemma}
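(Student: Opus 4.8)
The plan is to estimate the measure of the level set $\{\, b \in I_\nu(a) : E(b,\nu) = t\,\}$ by tracking how the image interval $\xi_{\nu+k}(I_{\nu+k}(b))$ evolves under iteration, and by carefully accounting for the branching that occurs at the intermediate return times. The starting point is that $\nu$ is a return time for $a$ with $|\xi_\nu(I_\nu(a))| = e^{-r} \le \delta$, so the image is short. First I would observe that if $E(b,\nu) = t$, then for every $0 \le k < t$ the parameter $b$ lies in a partition element $I_{\nu+k}(b)$ with $\delta^2 \le |\xi_{\nu+k}(I_{\nu+k}(b))| < \delta$, i.e.\ the image never reaches escape position before time $t$ but also never shrinks below $\delta^2$ (if it did, the escape clock would restart, as in the definition of escape time, or we would be in a different bookkeeping regime). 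So I want to count, and control the total length of, the partition elements of generation $\nu + t$ inside $I_\nu(a)$ whose image has length in $[\delta^2,\delta)$ at every intermediate step.

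The key mechanism is the following tension between expansion and branching. Between consecutive return times, the map $\xi$ is smooth on the relevant interval, and by Assumption~\ref{ass:comparablederivatives} together with bounded distortion \eqref{eq:boundeddistortion}--\eqref{eq:boundeddistortion2}, each application of $T_a$ expands the image by a factor at least $\lambda \ge e^5$. Since the image is constrained to stay below $\delta < 1$, an expansion by $e^5$ in one step must be compensated by a return (a cut), and each return can split a given interval into at most $2$ pieces of generation-$(\nu+1)$ type, because $\delta$ was chosen (see the start of the large deviations section) so that an interval whose image has length $\le 2\delta$ contains at most $2$ partition elements of the next generation. Thus over $t$ steps, the number of candidate partition elements grows by at most $2^t$, while the expansion forces the total preimage length to shrink by roughly $e^{5t}$; the net gain is $e^{-t(5 - \log 2)}$. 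I would make this precise by an inductive estimate: if $\mathcal{G}_k$ denotes the collection of generation-$(\nu+k)$ partition elements $J \subset I_\nu(a)$ with $\delta^2 \le |\xi_{\nu+k}(J)| < \delta$, then $\sum_{J \in \mathcal{G}_{k+1}} |J| \le 2 e^{-5} \sum_{J \in \mathcal{G}_k} |J|$, using that each $J \in \mathcal{G}_k$ contributes at most two children, each of length at most $e^{-5}|J|$ times a bounded-distortion constant, since the image expanded by at least $e^5$ but stays below $\delta$. Iterating from $k = 0$, where $\mathcal{G}_0 = \{I_\nu(a)\}$, gives $\sum_{J \in \mathcal{G}_t}|J| \le (2e^{-5})^t |I_\nu(a)| = e^{-t(5 - \log 2)}|I_\nu(a)|$, and the level set $\{E(b,\nu) = t\}$ is contained in $\bigcup_{J \in \mathcal{G}_t} J$ up to the final-step condition $|\xi_{\nu+t}(I_{\nu+t}(b))| \ge \delta$, which only helps. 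The extra factor $C\delta e^r$ comes from the first step: because $|\xi_\nu(I_\nu(a))| = e^{-r}$ is potentially much smaller than $\delta$, the image must first grow from $e^{-r}$ up to size $\ge \delta^2$ before the counting above applies, and a bounded-distortion / ratio argument comparing $|\xi_\nu(I_\nu(a))| = e^{-r}$ with the portion of $I_\nu(a)$ that survives to generation $\nu + t$ in $\mathcal{G}_t$ contributes the correction $\delta e^r$ (one may absorb constants into $C$).

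The main obstacle I anticipate is the bookkeeping around the lower threshold $\delta^2$ and the definition of escape time. The escape-time definition distinguishes whether $|\xi_\nu(I_\nu(a))| \ge \delta^2$ or not, and the case $E(a,\nu) = 0$ is handled separately; for the level set $\{E(b,\nu) = t\}$ with $t > 0$ I must be careful that the intermediate images genuinely stay in the band $[\delta^2,\delta)$, or else correctly handle sub-branches where the image dips below $\delta^2$ and the escape clock conceptually resets. Reconciling the recursive length estimate with these case distinctions — and ensuring the distortion constant $c$ from \eqref{eq:boundeddistortion} does not accumulate over the $t$ steps (it does not, because bounded distortion is applied once per monotone branch, and the telescoping only sees a single distortion factor per maximal smooth piece) — is where the real care is needed; the expansion-versus-branching count itself is robust.
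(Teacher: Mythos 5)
Your global intuition (expansion $e^{5}$ per step versus at most two cuts per step, net rate $e^{-t(5-\log 2)}$) is the right one, but the way you implement it contains a genuine gap: the inductive inequality $\sum_{J\in\mathcal{G}_{k+1}}|J|\leq 2e^{-5}\sum_{J\in\mathcal{G}_k}|J|$ is false as justified. If $\xi_{\nu+k+1}$ happens to be continuous on $I_{\nu+k}(b)$ (no cut at that step), the generation-$(\nu+k+1)$ element \emph{equals} the generation-$(\nu+k)$ element, so the parameter length does not shrink at all at that step; and even at a cutting step, bounded distortion only compares parameter lengths with \emph{image} lengths at a common time, giving $|J'|/|J|\lesssim |\xi_{\nu+k}(J')|/|\xi_{\nu+k}(J)|\lesssim \delta e^{-5}/|\xi_{\nu+k}(J)|$, which is of order $e^{-5}/\delta$ (not $e^{-5}$) even on your band $[\delta^2,\delta)$. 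Iterating a per-step comparison would in any case accumulate a distortion factor $c^t$, which you cannot afford since the lemma needs the exact exponential rate with a $t$-independent constant. A second problem is that the factor $C\delta e^{r}$ --- the only place where the hypothesis $|\xi_\nu(I_\nu(a))|=e^{-r}$ enters --- is left as a handwave about ``the first phase''; note that $e^{-r}\leq\delta$ means $\delta e^{r}\geq 1$, so the bound $e^{-t(5-\log 2)}|I_\nu(a)|$ your iteration aims at is actually \emph{stronger} than the lemma, and it is not attainable by this route. (Also, membership in $\{E(b,\nu)=t\}$ does not force the intermediate images to stay above $\delta^2$; only the upper bound $<\delta$ for $0<k<t$ holds, and only that is needed.)

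The paper's proof avoids all of this by a single global comparison rather than a step-by-step one: for $b$ in the set $A=\{b\in I_\nu(a): |\xi_{\nu+k}(I_{\nu+k}(b))|<\delta,\ 0<k<t\}$ one bounds each generation-$(\nu+t)$ element at once, using bounded distortion \eqref{eq:boundeddistortion} and Assumption~\ref{ass:comparablederivatives} only once across the whole block: $|I_{\nu+t}(b)|/|I_\nu(a)|\leq c\,|\xi_\nu(I_{\nu+t}(b))|/|\xi_\nu(I_\nu(a))| = c e^{r}|\xi_\nu(I_{\nu+t}(b))| \leq c^2 e^{r} e^{-5t}|\xi_{\nu+t}(I_{\nu+t}(b))|\leq C\delta e^{r-5t}$, the cumulative derivative $e^{5t}$ being applied against the image length at the final time (which is at most a constant times $\delta$). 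The intermediate condition $<\delta$ is used only for counting: it guarantees at most two pieces are created per step, hence at most $2^t$ such elements, and multiplying gives $\lambda(A)\leq 2^t C\delta e^{r-5t}|I_\nu(a)| = C\delta e^{r-t(5-\log 2)}|I_\nu(a)|$. Restructuring your argument along these lines --- expansion used cumulatively and anchored at the final image length, smallness used only for the $2^t$ count, and one distortion constant for the whole block --- closes the gap.
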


\begin{proof}
  Let $t$ be fixed and put
  \[
  A := \{\, b \in I_\nu (a) : |\xi_{\nu+k} (I_{\nu+k} (b))| < \delta,\
  0 < k < t \,\}.
  \]
  We will use the inclusion
  \[
  \{\, b \in I_\nu (a) : E(b,\nu) = t \,\} \subset A
  \]
  and prove that $\lambda (A) \leq C \delta e^{r - t(5- \log 2)}
  |I_\nu (a)|$.

  Take $b \in A$ and consider $I_{\nu+t} (b)$. We recall the bounded
  distortion property (\ref{eq:boundeddistortion}) and the fact that
  the parameter and space derivatives are comparable,
  Assumption~\ref{ass:comparablederivatives}. We have
  \begin{align*}
  \frac{|I_{\nu+t} (b)|}{|I_\nu (a)|} &\leq c \frac{|\xi_\nu
    (I_{\nu+t} (b))|}{|\xi_\nu (I_\nu (a))|} = c e^r |\xi_\nu
  (I_{\nu+t} (b))| \\ & \leq c^2 e^r |\xi_{\nu+t} (I_{\nu+t} (b))|
  e^{-5t} \leq C \delta e^{r-5t}.
  \end{align*}
  For a fixed $t$, we have at most $2^t$ such intervals $I_t (b)$ in
  $A$. (Since at every step $\xi_{\nu+k} (I_{\nu+k} (b))$ is cut into
  at most two pieces, by the definition of $A$ and the choice of
  $\delta$.) Hence the measure of $A$ is at most
  \[
  2^t \max |I_{\nu + t} (b)| \leq 2^t C
  \delta e^{r-5t} |I_\nu (a)| \leq C \delta e^{r - t (5-\log
    2)} |I_\nu (a)|. \qedhere
  \]
\end{proof}

Before we state the next lemma we introduce the notation $A \sim B$,
meaning that there exists a constant $c > 1$ such that
\[
 \frac{1}{c}A \leq B \leq c A,
\]
where $A$ and $B$ are two expressions depending on a number of
variables.

\begin{lemma} \label{growth1}
  Suppose $\omega = I_{\nu+k}(a)$ and $\nu$ is a return time, $k
  >0$. Then for all $b \in \omega$,
  \[
  |\xi_{\nu}(\omega)||(T_b^{k})'(T_b^{\nu}(X(b)))| \sim
  |\xi_{\nu+k}(\omega)|.
  \]
\end{lemma}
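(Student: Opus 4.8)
The plan is to use the bounded distortion estimate \eqref{eq:boundeddistortion}, the comparability between parameter and space derivatives (Assumption~\ref{ass:comparablederivatives}), and the bounded distortion for parameter derivatives \eqref{eq:boundeddistortion2}, together with the fact that on $\omega = I_{\nu+k}(a)$ the map $\xi_{\nu+k}$ is continuous (so in particular $\xi_\nu$ is continuous on $\omega$ and $T_b^k$ is continuous on $\xi_\nu(\omega)$ for each fixed $b$). First I would rewrite the left-hand side: for a fixed $b \in \omega$, the quantity $|(T_b^k)'(T_b^\nu(X(b)))|$ is the derivative of $T_b^k$ at the point $\xi_\nu(b) \in \xi_\nu(\omega)$, and $T_b^k$ maps $\xi_\nu(\omega)$ onto $\xi_{\nu+k}(\omega)$ (this uses that the relevant branches are the same along $\omega$, which is exactly what continuity of $\xi_{\nu+k}$ on $\omega$ guarantees). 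By the mean value theorem there is a point $z \in \xi_\nu(\omega)$ with $|(T_b^k)'(z)| = |\xi_{\nu+k}(\omega)| / |\xi_\nu(\omega)|$, and by bounded distortion \eqref{eq:boundeddistortion} for the fixed map $T_b$ on the interval $\xi_\nu(\omega)$, on which $T_b^k$ is continuous, we have $|(T_b^k)'(\xi_\nu(b))| \sim |(T_b^k)'(z)|$ with a constant independent of $b$, $\nu$, $k$. Combining these gives
\[
|(T_b^k)'(T_b^\nu(X(b)))| \sim \frac{|\xi_{\nu+k}(\omega)|}{|\xi_\nu(\omega)|},
\]
which rearranges to the claimed relation.

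The one subtlety to handle carefully is that $|\xi_\nu(\omega)|$ appears on the left as a genuine length (it is the same for all $b$, since it is just the length of the image set), whereas $|(T_b^k)'(\cdot)|$ is a pointwise derivative that a priori varies with $b$; the estimate above shows the product is comparable to a $b$-independent quantity, so the relation holds uniformly in $b \in \omega$ as stated. I would also note that the hypothesis that $\nu$ is a return time and $k>0$ is only used to ensure we are in the regime where the refined partition and Assumption~\ref{ass:comparablederivatives} (valid for $n > N$) apply, and where $\xi_\nu(\omega)$ is a nondegenerate interval; for the finitely many small values of $\nu$ the relation is trivial after adjusting the constant.

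The main obstacle, such as it is, is purely bookkeeping: one must be sure that along the whole interval $\omega$ the iterate $T_b^k$ is being computed with the same symbolic itinerary, so that "the image of $\xi_\nu(\omega)$ under $T_b^k$ equals $\xi_{\nu+k}(\omega)$" is literally correct and not off by the contribution of an extra discontinuity. This is guaranteed because $\omega = I_{\nu+k}(a)$ is by definition a maximal interval of continuity of $\xi_{\nu+k}$, hence also of $\xi_{\nu+1}, \ldots, \xi_{\nu+k}$, so no new branch is encountered between times $\nu$ and $\nu+k$. Once this is observed, the rest is an immediate application of the two bounded distortion estimates already established, and there is no real difficulty.
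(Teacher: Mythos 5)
There is a genuine gap at the central step. You claim that, for a fixed $b \in \omega$, the map $T_b^k$ sends $\xi_\nu(\omega)$ onto $\xi_{\nu+k}(\omega)$, and you justify this by the continuity of $\xi_{\nu+k}$ on $\omega$. But $\xi_{\nu+k}(b') = T_{b'}^k\bigl(\xi_\nu(b')\bigr)$ for each $b' \in \omega$: the point $\xi_\nu(b')$ is moved by the map with the \emph{varying} parameter $b'$, not by the single fixed map $T_b^k$. Continuity of $\xi_{\nu+k}$ on $\omega$ only says that the parametrised orbit does not cross a discontinuity; it does not identify $\xi_{\nu+k}(\omega)$ with $T_b^k(\xi_\nu(\omega))$, and it does not even guarantee that $T_b^k$ is continuous on the whole interval $\xi_\nu(\omega)$ (the branch points $b_i(a)$ move with $a$), so the application of \eqref{eq:boundeddistortion} to $T_b^k$ on $\xi_\nu(\omega)$ is also not justified. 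Comparing $|T_b^k(\xi_\nu(\omega))|$ with $|\xi_{\nu+k}(\omega)|$ is essentially the content of the lemma itself: it is exactly where the parameter dependence must be controlled, and without Assumption~\ref{ass:comparablederivatives} the statement can fail. Your opening plan lists Assumption~\ref{ass:comparablederivatives} and \eqref{eq:boundeddistortion2}, but the argument you actually run never uses them, so the step that needs them is missing rather than merely "bookkeeping".

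The repair is to work at the level of derivatives of the curve $a \mapsto \xi_n(a)$, which is what the paper does: by \eqref{eq:boundeddistortion2} and the mean value theorem, $|\xi_{\nu+k}(\omega)| \sim |\omega|\,|\xi_{\nu+k}'(b)|$ and $|\xi_{\nu}(\omega)| \sim |\omega|\,|\xi_{\nu}'(b)|$; by Assumption~\ref{ass:comparablederivatives} and the chain rule, $\xi_{\nu+k}'(b) \sim (T_b^{\nu+k})'(X(b)) = (T_b^k)'\bigl(T_b^\nu(X(b))\bigr)\,(T_b^\nu)'(X(b))$ and $(T_b^\nu)'(X(b)) \sim \xi_\nu'(b)$. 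Chaining these four comparabilities gives
\[
|\xi_{\nu+k}(\omega)| \sim |\omega|\,|\xi_\nu'(b)|\,\bigl|(T_b^k)'\bigl(T_b^\nu(X(b))\bigr)\bigr| \sim |\xi_\nu(\omega)|\,\bigl|(T_b^k)'\bigl(T_b^\nu(X(b))\bigr)\bigr|,
\]
uniformly in $b \in \omega$, with no need to transport the interval $\xi_\nu(\omega)$ by any fixed map.
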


\begin{proof}
  First note that by the fact that space and parameter derivatives are
  comparable,
  \[
  \xi_{\nu+k}'(b) \sim (T_b^{\nu+k}(X(b)) = (T_b^k)'(T_b^{\nu}(X(b))
  (T_b^{\nu})'(X(b)).
  \]
  Hence, for $b \in \omega$,
  \begin{align*}
    |\xi_{\nu+k}(\omega)| &\sim |\omega| |\xi_{\nu+k}'(b)| \sim
    |\omega| |(T_b^k)'(T_b^{\nu}(X(b))|| (T_b^{\nu})'(X(b))|
    \\
    &\sim | \omega| |\xi_{\nu}'(b)| |(T_b^k)'(T_b^{\nu}(X(b))| \sim
    |\xi_{\nu}(\omega)||(T_b^k)'(T_b^{\nu}(X(b))|. \qedhere
  \end{align*}
\end{proof}

\begin{lemma} \label{single-escape}
  Suppose $\xi_{\nu-1}(I_{\nu-1} (a))$ is in escape position, and that
  $\nu$ is a return time for $a$. Then we have
  \[
  \int_{I_{\nu-1} (a)} e^{E(b,\nu)} \, \mathrm{d}b \leq |I_{\nu-1} (a)|
  (1+\eta(\delta)),
  \]
  where $\eta(\delta) \leq C_0 \delta^{1/3}$.
\end{lemma}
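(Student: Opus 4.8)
The plan is to split the integral according to the value of the escape time $E(b,\nu)$ and use the level-set estimate of Lemma~\ref{lem:levelset}. First I would record that since $\xi_{\nu-1}(I_{\nu-1}(a))$ is in escape position and $\nu$ is a return time, the interval $\xi_\nu(I_{\nu}(a))$ has length $e^{-r} \le \delta$ for some $r$; moreover, because the passage from generation $\nu-1$ to $\nu$ only cuts $\xi_{\nu-1}(I_{\nu-1}(a))$ into at most two pieces (by the choice of $\delta$) and $|\xi_{\nu-1}(I_{\nu-1}(a))| \ge \delta$, the return piece cannot be too short: we get a lower bound $e^{-r} \ge c'' \delta$, i.e.\ $r \le \log(1/\delta) + O(1)$. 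This bound on $r$ is what makes the prefactor $\delta e^{r}$ in Lemma~\ref{lem:levelset} harmless.

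Next I would decompose $I_{\nu-1}(a)$ (or, what is comparable up to the bounded-distortion constant, $I_\nu(a)$) into the level sets $\{\, b : E(b,\nu) = t \,\}$ for $t = 1, 2, 3, \ldots$, together with the exceptional set where $E(b,\nu) = \infty$, which has measure zero and contributes nothing. On the $t$-th level set the integrand $e^{E(b,\nu)}$ equals $e^{t}$, so by Lemma~\ref{lem:levelset},
\[
\int_{I_{\nu-1}(a)} e^{E(b,\nu)} \, \mathrm{d}b
\le |I_{\nu-1}(a)| \Bigl( \lambda\{E(b,\nu)=0\}/|I_{\nu-1}(a)| + \sum_{t \ge 1} C \delta e^{r} e^{-t(5-\log 2)} e^{t} \Bigr).
\]
The series is geometric with ratio $e^{1 - (5 - \log 2)} = e^{\log 2 - 4} = 2 e^{-4} < 1$, hence it converges and its sum is bounded by $C \delta e^{r} \cdot \frac{2e^{-4}}{1 - 2e^{-4}}$. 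Using $e^{r} \le C'/\delta$ from the first step, this whole tail is at most $C_0 \delta^{0}$ times a constant --- but to obtain the stated bound $\eta(\delta) \le C_0 \delta^{1/3}$ I would instead note that one should only sum over $t$ large enough that escape has not already happened at small times, or more simply truncate: the contribution of $t \ge t_0$ is exponentially small in $t_0$, and choosing $t_0$ comparable to $\frac{1}{3}\log(1/\delta)$ trades the factor $e^{r} \le \delta^{-1}$ against $e^{-t_0(4 - \log 2)}$ to leave a net power $\delta^{1/3}$; the finitely many terms with $t < t_0$ each carry measure $O(\delta)$ by Lemma~\ref{lem:levelset} (since $\delta e^{r} e^{-t(5-\log 2)} e^t \le C'$ but the measure of $\{E = t\}$ relative to $|I_\nu(a)|$ is itself $O(\delta \cdot \delta^{-1} e^{-t(4-\log2)}) = O(e^{-t(4-\log 2)})$, and for the few small $t$ one keeps the genuine $\delta$ from escape-position geometry). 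Finally, the $t = 0$ term contributes at most $1$ (the measure of its level set never exceeds $|I_{\nu-1}(a)|$ and the integrand is $e^0 = 1$), which produces the leading $1$ in $(1 + \eta(\delta))$.

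The main obstacle here is bookkeeping the constants so that the geometric-series bound genuinely comes out as $1 + O(\delta^{1/3})$ rather than $1 + O(1)$: the subtlety is that Lemma~\ref{lem:levelset} has the potentially large prefactor $e^{r}$, and $e^{r}$ can be as big as $\delta^{-1}$, so one cannot simply sum the bound as stated. The resolution is to exploit that $E(b,\nu) = t$ with small $t$ forces the return piece to have been short at \emph{every} intermediate generation (length $< \delta$), which already costs a genuine factor $\delta$ independent of the $e^{r}$, while for large $t$ the exponential gain $e^{-t(4-\log 2)}$ dominates $e^r$; the crossover is arranged by the $\delta^{1/3}$ split. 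Once the constants are tracked carefully the estimate $\eta(\delta) \le C_0 \delta^{1/3}$ follows, and the lemma is proved.
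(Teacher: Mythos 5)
Your plan has a genuine gap at its very first step: the claimed lower bound $e^{-r}\geq c''\delta$ for the image of the return piece at time $\nu$ is not justified and is false in general. The choice of $\delta$ guarantees that $\xi_{\nu-1}(I_{\nu-1}(a))$ is cut into at most two pieces whose images are shorter than $\delta$, but this bounds the \emph{number} of short pieces, not their \emph{length}: the discontinuity can fall arbitrarily close to an endpoint, so the piece containing a given parameter can have image of length $e^{-r}$ with $r$ arbitrarily large. This is exactly why Lemma~\ref{lem:levelset} carries the unbounded prefactor $e^{r}$. Once $e^{r}\leq C'/\delta$ is unavailable, your truncation at $t_0\sim\frac13\log(1/\delta)$ no longer trades anything useful, and the geometric tail $\sum_{t\geq t_0}C\delta e^{r}e^{-t(4-\log 2)}$ cannot be made small uniformly in $r$. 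In addition, your treatment of the small-$t$ terms is circular: the assertion that $\lambda\{E(\cdot,\nu)=t\}=O(\delta)\,|I_\nu(a)|$ for small $t$ ``from escape-position geometry'' is precisely the estimate that needs a proof, and Lemma~\ref{lem:levelset} alone does not give it (its bound is trivial when $t$ is small compared to $r$).

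The paper's proof is built to overcome exactly this difficulty. It keeps the at-most-two exceptional pieces $\omega_{r,1},\omega_{r,2}$, and inside such a piece it does \emph{not} apply the level-set lemma at time $\nu$; instead it decomposes the set $\{E>0\}$ into partition elements $\omega_j=I_{\nu+k_j}(b)$ at the first time $k_j$ the image drops below $\delta^2$, with depth $e^{-r_j}$, $r_j\geq 2\Delta$ where $\delta=e^{-\Delta}$. Lemma~\ref{growth1} (comparing $|\xi_{\nu-1}(\omega_j)|$ and $|\xi_{\nu+k_j}(\omega_j)|$ through the derivative $(T_b^{k_j+1})'$) gives $|\omega_j|/|I_{\nu-1}(a)|\lesssim e^{-r_j}/(\delta\,e^{5k_j})$; the integral over $\omega_j$ is then split at $E\lessgtr r_j/3$, the tail handled by Lemma~\ref{lem:levelset} with $r=r_j$, and the sum over $k_j=k$ is controlled by the multiplicity $2^k$ against the expansion $e^{-5k}$. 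The exponent $1/3$ comes from $e^{r_j/3}\cdot e^{-r_j}/\delta\leq e^{-2r_j/3}/\delta\leq\delta^{4/3}/\delta=\delta^{1/3}$, i.e.\ from the guaranteed depth $r_j\geq 2\Delta$ of the first sub-$\delta^2$ return — not from a cutoff in $t$. Without some substitute for this step (a gain of a factor comparable to $e^{-r_j}/\delta$ together with control of the multiplicity in $k$), your argument does not close.
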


\begin{proof}
  Note that $I_{\nu-1} (a)$ is split by $\xi_\nu$ into several pieces,
  each on which $\xi_\nu$ is continuous. There can be many pieces,
  who's images under $\xi_\nu$ are at least of length $\delta$. We let
  $\omega_q$, denote the union of these pieces. On $\omega_q$ we have
  $E = 0$.

  Left are at most two pieces, which we call $\omega_{r,1}$ and
  $\omega_{r,2}$. For these pieces, the image $\xi_n (\omega_r)$ could
  be much smaller, where $\omega_r$ is one of $\omega_{r,1}$ and
  $\omega_{r,2}$. It remains to estimate
  \[
  \int_{\omega_r} e^{E(b,\nu)} \, \mathrm{d}b.
  \]
  Let $\omega_r$ be the one of the two pieces $\omega_{r,1}$ and
  $\omega_{r,2}$ for which the above integral is largest.

  Let $|\xi_{\nu}(\omega_r)| = e^{-r}$, where $e^{-r} \leq 4 c \max
  |T_a'| \delta$. The larger piece will escape directly in the next
  step, so therefore we consider only $\omega_r$.

  The set $\omega_r$ is subdivided into two sets:
  \begin{align*}
    \omega_0 &= \{ a \in \omega_r : E(\nu,a) = 0 \} \\
    \omega_1 &= \{ a \in \omega_r : E(\nu,a) > 0 \}.
  \end{align*}
  By definition $\omega_0$ is again subdivided into two sets $\omega_0
  = \omega_0' \cup \omega_0''$ where $\omega_0''$ is the set of
  parameters $b$ that has $\delta > |\xi_{\nu+k}(I_{\nu+k}(b))| \geq
  \delta^2$ for all $k > 0$. Hence $\omega_0'$ is the set of
  parameters which makes escape without becoming smaller than
  $\delta^2$. These are good parameters since $E(b,\nu)=0$ and we now
  turn to $\omega_1$. We subdivide $\omega_1$ into intervals
  $\omega_j$ as follows. Suppose $|\xi_{\nu+k_j}(I_{\nu+k_j}(b))| \leq
  \delta^2$ for the least possible $k_j > 0$.  Then put $\omega_j =
  I_{\nu+k_j}(b)$. Suppose that $|\xi_{\nu+k_j}(I_{\nu+k_j}(b))| =
  e^{-r_j}$. Note that $r_j \geq 2\Delta$, where $\delta =
  e^{-\Delta}$. Set $\omega = I_{\nu-1}(a)$, so that
  $|\xi_{\nu-1}(\omega)| \geq \delta$.

  Now apply Lemma~\ref{growth1} to $\omega_j$ and $k_j+1$, to get ($b
  \in \omega_j$)
  \[
  |\xi_{\nu-1}(\omega_j)||(T_b^{k_j+1})'(b)| \sim
  |\xi_{\nu+k_j}(\omega_j)|.
  \]

  Hence,
  \begin{align*}
    \frac{|\omega_j|}{|\omega|} &\sim \frac{|\xi_{\nu-1}
      (\omega_j)|}{|\xi_{\nu-1} (\omega)|} = \frac{|\xi_{\nu-1}
      (\omega_j)||(T_b^{k_j+1})'
      (\xi_{\nu}(b))|}{|\xi_{\nu-1}(\omega)|
      |(T_b^{k_j+1})' (\xi_{\nu}(b))|} \\
    &\sim \frac{|\xi_{\nu+k_j} (\omega_j)|}{|\xi_{\nu-1} (\omega)|
      |(T_b^{k_j+1})' (\xi_{\nu}(b))|} \leq \frac{e^{-r_j}}{\delta
      |(T_b^{k_j+1})'(\xi_{\nu}(b))| },
  \end{align*}
  where $b \in \omega_j$.

  Now we get, using Lemma~\ref{lem:levelset}
  \begin{align*}
    \int_{I_{\nu-1} (a)} &e^{E(b,\nu)} \, \mathrm{d}b = |I_{\nu-1}
    (a)| + \int_{\omega_{r,1} \cup \omega_{r,2}} e^{E(b,\nu)} \,
    \mathrm{d}b \\ & \leq |I_{\nu-1} (a)| + 2 \int_{\omega_r}
    e^{E(b,\nu)} \, \mathrm{d}b \\ & \leq |I_{\nu-1}(a)| \\ & \qquad +
    2 \sum_j \int\limits_{\{b \in \omega_j: E(b,\nu) < r_j/3 \} }
    e^{E(b,\nu)} \, \mathrm{d}b + \int\limits_{ \{b \in \omega_j:
      E(b,\nu) \geq r_j/3 \} } e^{E(b,\nu)} \, \mathrm{d}b \\ & \leq
    |I_{\nu-1} (a)| + 2 \sum_j \biggl( e^{r_j/3} |\omega_j| +
    \sum_{t=r_j/3}^\infty C \delta e^{r_j - t (5-\log 2)} |\omega_j|
    \biggr) \\ &\leq |I_{\nu-1} (a)| + \sum_j \biggl( C e^{r_j/3}
    \frac{e^{-r_j}}{\delta |(T_a^{k_j})'(\xi_{\nu} (a))|} |I_{\nu-1}
    (a)| \\ & \qquad + C' \delta e^{r_j - r_j(5-\log 2)/3}
    \frac{e^{-r_j}}{\delta |(T_a^{k_j})'(\xi_{\nu} (a))|} |I_{\nu-1}
    (a)| \biggr).
  \end{align*}
  We want to fix the lengths $k_j$ in the above sum. There are at most
  $2^{k}$ small intervals $\omega_j$ that have fixed $k_j=k$. Recall
  that $|(T_a^{k_j})'(\xi_{\nu}(b))| \geq e^{5k_j}$. Summing over the
  lengths $k_j=k$ instead we get
  \begin{align*}
    \sum_j &\biggl( C e^{r_j/3} \frac{e^{-r_j}}{\delta
      |(T_a^{k_j})'(\xi_{\nu} (a))|} |I_{\nu-1} (a)| \\ &\qquad + C'
    \delta e^{r_j - r_j(5-\log 2)/3} \frac{e^{-r_j}}{\delta
      |(T_a^{k_j})'(\xi_{\nu} (a))|} |I_{\nu-1} (a)| \biggr) \\ &\leq
    \sum_{k} \sum_{j, k_j=k} \biggl( C e^{r_j/3}
    \frac{e^{-r_j}}{\delta e^{5k}} |I_{\nu-1} (a)| + C' \delta e^{r_j
      - r_j(5-\log 2)/3} \frac{e^{-r_j}}{\delta e^{5k}} |I_{\nu-1}
    (a)| \biggl) \\ &\leq \sum_{k} \sum_{j, k_j=k} |I_{\nu-1} (a)| ( C
    e^{-2r_j/3 } \delta^{-1}e^{-5k} + C'e^{-r_j(5-\log 2)/3} e^{-5k})
    \\ &\leq \sum_{k} |I_{\nu-1} (a)| ( C e^{-4\Delta/3 } \delta^{-1}
    2^k e^{-5k} + C'e^{-2 \Delta (5-\log 2)/3} 2^k e^{-5k}) \\ &\leq
    C'' \delta^{1/3}.
  \end{align*}
  Finally,
  \[
  \int_{I_{\nu-1} (a)} e^{E(b,\nu)} \, \mathrm{d}b \leq |I_{\nu-1}
  (a)| (1 + C'' \delta^{1/3}),
  \]
  where $\eta(\delta) = C'' \delta^{1/3} \rightarrow 0$ as $\delta
  \rightarrow 0$.
\end{proof}

We want to consider those parameters with exactly $s$ escape
situations in a time interval $[n, (1+\iota) n]$ for some $n > 0$,
i.e.\ there are exactly $s$ return times $\nu$ with $\nu \in
[n,(1+\iota)n]$.

\begin{lemma} \label{escape-int}
  Let $\omega_s$ be the set of parameters in a given partition element
  $I_\nu (a)$, such that every parameter in $\omega_s$ has precisely
  $s$ free returns after escape situations in the time interval
  $[n,(1+\iota)n]$. Then
  \[
  \int_{\omega_s} e^{\Theta_n(b)} \, \mathrm{d}b \leq
  |\omega_s|(1+\eta(\delta))^s \leq |\omega_s| e^{\tau_0 n},
  \]
  for $\tau_0 = \iota \log(1+\eta(\delta)) \leq c_0 \iota
  \delta^{1/3}$.
\end{lemma}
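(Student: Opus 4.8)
The plan is to bound the integral of $e^{\Theta_n(b)}$ by iterating the single-escape estimate from Lemma~\ref{single-escape} across the at most $s$ escape situations that occur in the time window $[n,(1+\iota)n]$. The key point is that $\Theta_n(b) = \sum_{j=1}^{s(b)} E(b,\nu_j)$ is a sum of escape times at consecutive return times, and each individual escape time can be controlled — in the averaged, integrated sense — on the partition element in escape position from which it starts.

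First I would set up the induction on the number $s$ of returns after escape situations. Consider the partition of $\omega_s$ according to the location $\nu_1 \in [n,(1+\iota)n]$ of the first return time following an escape position, and further according to which partition element $I_{\nu_1-1}(b)$ the parameter lies in; by construction each such $I_{\nu_1-1}(a)$ is in escape position and $\nu_1$ is a return time for it, so Lemma~\ref{single-escape} applies. On each such piece, write $e^{\Theta_n(b)} = e^{E(b,\nu_1)} e^{\Theta_n(b) - E(b,\nu_1)}$, where $\Theta_n(b) - E(b,\nu_1)$ only involves the remaining $s-1$ returns, all occurring at times $\geq \nu_1 + E(b,\nu_1)$. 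The subtlety is that after integrating out the first escape time one is left, via Lemma~\ref{single-escape}, with integrals over the partition elements $\xi_{\nu_1+E(b,\nu_1)}(\cdot)$ that have reached escape position, which are precisely the starting data for the next stage of the induction. Iterating, each of the $s$ stages contributes a factor $(1+\eta(\delta))$, giving
\[
\int_{\omega_s} e^{\Theta_n(b)} \, \mathrm{d}b \leq |\omega_s| (1+\eta(\delta))^s.
\]

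For the second inequality, I would use the crude bound $s \leq \iota n$ on the number of return times in the interval $[n,(1+\iota)n]$, which holds because every return time is at least one iterate apart (indeed $\nu_{j+1} \geq \nu_j + E(b,\nu_j) \geq \nu_j + 1$ when $E > 0$, and consecutive returns are distinct integers in a window of length $\iota n$). Hence $(1+\eta(\delta))^s \leq (1+\eta(\delta))^{\iota n} = e^{\iota n \log(1+\eta(\delta))} = e^{\tau_0 n}$ with $\tau_0 = \iota \log(1+\eta(\delta))$, and since $\log(1+\eta(\delta)) \leq \eta(\delta) \leq C_0 \delta^{1/3}$ we get $\tau_0 \leq c_0 \iota \delta^{1/3}$ for a suitable constant $c_0$.

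The main obstacle is making the induction step clean: one must verify that the pieces produced when applying Lemma~\ref{single-escape} at the first return are genuinely in escape position (so that the hypothesis of that lemma is met again at the next return), that summing over all first-return times $\nu_1$ and all sub-intervals does not introduce extra combinatorial factors beyond the $(1+\eta(\delta))$ already accounted for, and that the decomposition of $\Theta_n$ into $E(b,\nu_1)$ plus a tail depending only on later returns is exact. The bounded distortion estimates \eqref{eq:boundeddistortion} and Assumption~\ref{ass:comparablederivatives}, already used in the proof of Lemma~\ref{single-escape}, ensure that passing between parameter space and phase space at each stage costs only the fixed constant $c$, which is absorbed into $\eta(\delta)$; so the geometry of the nested partition elements is the only thing that needs careful bookkeeping.
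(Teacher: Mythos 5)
Your argument is correct and essentially the paper's own: both telescope Lemma~\ref{single-escape} over the $s$ nested returns after escape situations (using that earlier escape times are constant on the deeper partition elements, which are again in escape position so the lemma reapplies), gaining one factor $(1+\eta(\delta))$ per return, and then use $s \leq \iota n$ together with $\log(1+\eta(\delta)) \leq \eta(\delta) \leq C_0\delta^{1/3}$ to get the bound $e^{\tau_0 n}$. The only difference is organizational: you peel off the first return and induct on $s$, whereas the paper integrates out the innermost (last) return first and works outward --- the same telescoping read in the opposite direction.
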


\begin{proof}
  We want to apply Lemma~\ref{single-escape} $s$ times. Note that
  $\omega_s$ is a union of intervals, and for each parameter $a \in
  \omega_s$ there is a nested sequence of intervals $\omega_j' \subset
  \omega_1'$, where $\omega_1'$ is an interval in $\omega_s$,
  $j=1,\ldots,s$ and such that $\omega_{j+1}' \subset \omega_j'$. We
  also include in the definition of $\omega_j'$ that
  $\xi_{\nu_j}(\omega_j')$ is a return after an escape situation,
  writing $\nu_j=\nu_j(a)$ for the return times for the parameter $a$.
  So we have $|\xi_{\nu_j-1}(\omega_j')| \geq \delta$.  Hence
  $\xi_{\nu_s}(\omega_s')$ is the last return after escape situation
  in the time interval $[n,(1+\iota)n]$ for all parameters $a \in
  \omega_s'$.  Since $E(a,\nu_{j-1})$ is constant on $\omega_j'$ we
  get using Lemma~\ref{single-escape} that
  \begin{align*}
    \int_{\omega_{s-1}'}e^{E(b,\nu_{s-1})+E(b,\nu_s)} \, \mathrm{d}b
    &= \sum_{\omega_s' \subset \omega_{s-1}'} e^{E(b,\nu_{s-1})}
    \int_{\omega_s'} e^{E(b,\nu_s)} \, \mathrm{d}b \\ &\leq
    \sum_{\omega_s' \subset \omega_{s-1}'} e^{E(b,\nu_{s-1})}
    (1+\eta(\delta))|\omega_s'| \\ &\leq
    \int_{\omega_{s-1}'}e^{E(b,\nu_{s-1})} \, \mathrm{d}b
    (1+\eta(\delta)) \\ &\leq |\omega_{s-1}'|(1+\eta(\delta))^2.
  \end{align*}
  Repeating this argument $s$ times, it follows that
  \[
  \int_{\omega_1'} e^{\Theta_n(b)} \, \mathrm{d}b \leq
  |\omega_1'|(1+\eta(\delta))^s.
  \]
  Taking the union over all such intervals $\omega_1'$ we get
  \[
  \int_{\omega_s} e^{\Theta_n(b)} \, \mathrm{d}b \leq
  |\omega_s|(1+\eta(\delta))^s.
  \]
  Since $s \leq \iota n$ we may choose $\iota \log(1+\eta(\delta)) =
  \tau_0$ and the lemma follows.
\end{proof}

The proof of Proposition~\ref{largedev} is now short:

\begin{proof}[Proof of Proposition~\ref{largedev}]
  If $\delta$ is small enough, we have $\log(1 + \eta(\delta)) < 1$,
  so that we may choose $\tau_1$ satisfying $\tau_0 = \iota \log(1 +
  \eta(\delta)) < \tau_1 < 1$.

  By Lemma~\ref{escape-int},
  \begin{align*}
    e^{\tau_1 n} \lambda (\{\, b \in I_n (a) : \Theta_n(b) \geq \tau_1
    n \,\}) &\leq \int_{ \{\, b \in I_n (a) : \Theta_n(b) \geq \tau_1
      n \,\} } e^{\Theta_n(b)} \mathrm{d}b \\ &\leq \int_{I_n (a)}
    e^{\Theta_n(b)} \, \mathrm{d}b \leq e^{\tau_0 n}|I_n (a)|,
  \end{align*}
  so
  \[
  \lambda (\{\, b \in I_n (a) : \Theta_n(b) \geq \tau_1 n \,\}) \leq
  e^{ - (\tau_1 - \tau_0)n} |I_n(a)|. \qedhere
  \]
\end{proof}

Since we consider time intervals $[n,(1+\iota)n]$ we can repeat
Proposition~\ref{largedev} on every time interval starting with some
(sufficiently large) $N$ so that the result holds for $[(1+\iota)^j
  N,(1+\iota)^{j+1}N]$ for $j=0, 1, \ldots$, and hence for all $n >
N$. We get the following corollary.

\begin{corollary} \label{cor:largedev}
  If $\delta$ is small enough and $0 < \iota \leq 1$, then there
  exists a number $\tau_0 = \tau_0(\iota, \delta) \leq c_0 \iota
  \delta^{1/3}$, $0 < \tau_0 < 1$, such that for $\tau_0 < \tau_1 <
  1$, whenever $I_n (a)$ is in escape position, we have for $m \geq n$
  that
  \[
  \lambda(\{\, b \in I_n (a) : \Theta_m (b) \geq 3\tau_1 m \,\}) \leq
  2 e^{- (\tau_1-\tau_0)(1 - \iota) m} |I_n(a)|.
  \]
\end{corollary}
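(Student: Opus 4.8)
The plan is to iterate Proposition~\ref{largedev} along the geometric sequence of windows $[N_i,N_{i+1}]$ with $N_i=(1+\iota)^i n$, $i\geq 0$, which partition $[n,\infty)$. Fix $m\geq n$ and let $j\geq 0$ be the unique index with $N_j\leq m<N_{j+1}$. Since $N_{j+1}=(1+\iota)N_j$, we get $N_j>m/(1+\iota)\geq(1-\iota)m$ (because $(1-\iota)(1+\iota)=1-\iota^2\leq1$), and the window $[m,(1+\iota)m]$ appearing in the definition of $\Theta_m$ is contained in $[N_j,N_{j+2})$. Every return time $\nu\in[m,(1+\iota)m]$ therefore lies in $[N_j,N_{j+1})$ or in $[N_{j+1},N_{j+2})$, so, writing $\Theta^{(i)}(b)$ for the total escape time coming from returns after escape situations in $[N_i,N_{i+1}]$ (that is, $\Theta^{(i)}=\Theta_{N_i}$), one checks that
\[
\Theta_m(b)\leq\Theta^{(j)}(b)+\Theta^{(j+1)}(b).
\]
Consequently, if $\Theta_m(b)\geq3\tau_1 m$, then $\Theta^{(j)}(b)\geq\tau_1 N_j$ or $\Theta^{(j+1)}(b)\geq\tau_1 N_{j+1}$: otherwise $\Theta_m(b)<\tau_1(N_j+N_{j+1})=(2+\iota)\tau_1 N_j\leq3\tau_1 m$, using $\iota\leq1$ and $N_j\leq m$.

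It thus remains to estimate, for $i\in\{j,j+1\}$, the measure of $\{\,b\in I_n(a):\Theta^{(i)}(b)\geq\tau_1 N_i\,\}$. For this I would partition $I_n(a)$ into the partition elements $I_{N_i}(b)$ of generation $N_i$. On each such element that is in escape position, Proposition~\ref{largedev}, applied with $n$ replaced by $N_i$, gives
\[
\lambda(\{\,c\in I_{N_i}(b):\Theta^{(i)}(c)\geq\tau_1 N_i\,\})\leq e^{-(\tau_1-\tau_0)N_i}|I_{N_i}(b)|,
\]
and summing over these elements contributes at most $e^{-(\tau_1-\tau_0)N_i}|I_n(a)|\leq e^{-(\tau_1-\tau_0)(1-\iota)m}|I_n(a)|$. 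Adding the $i=j$ and $i=j+1$ contributions is what produces the factor $2$ in the statement.

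The remaining point, and the step I expect to be the main obstacle, is the parameters $b$ for which $I_{N_i}(b)$ is \emph{not} in escape position, so that Proposition~\ref{largedev} does not apply directly on that element. Such a $b$ is, at generation $N_i$, in the middle of an escape time: there is a return after an escape situation $\nu<N_i$ with $\nu+E(b,\nu)>N_i$, and this escape was already recorded in $\Theta^{(i')}$ for the window index $i'$ with $N_{i'}\leq\nu<N_{i'+1}$, whence $\Theta^{(i')}(b)\geq N_i-\nu$. Thus these parameters are caught by the estimate for an earlier window, and since $I_n(a)=I_{N_0}(a)$ is in escape position by hypothesis, iterating this observation back toward generation $n$ terminates. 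One has to verify that the errors produced by this recursion remain geometrically summable and stay within the constants $2$ and $(1-\iota)m$ claimed; this is where the remark that one can "repeat Proposition~\ref{largedev} on every time interval starting with $N$" is used, together with the geometric growth of the $N_i$ (so that the exponents $(\tau_1-\tau_0)N_i$ grow geometrically as well).

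Granting the control of the non-escape-position pieces, the union bound over the two windows $i=j$ and $i=j+1$ yields
\[
\lambda(\{\,b\in I_n(a):\Theta_m(b)\geq3\tau_1 m\,\})\leq2\,e^{-(\tau_1-\tau_0)(1-\iota)m}|I_n(a)|,
\]
which is the assertion of the corollary.
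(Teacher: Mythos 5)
Your overall skeleton --- covering $[m,(1+\iota)m]$ by two consecutive windows, the inequality $\Theta_m\le\Theta^{(j)}+\Theta^{(j+1)}$, the arithmetic $\tau_1(N_j+N_{j+1})\le 3\tau_1 m$, the union bound that produces the factor $2$, and the exponent $(1-\iota)m$ coming from $N_j\ge m/(1+\iota)$ --- is the same as in the paper's proof. But the step you yourself flag as ``the main obstacle'' is exactly where the proof lives, and the way you propose to close it does not work. The set of parameters $b$ for which $I_{N_i}(b)$ is \emph{not} in escape position at the fixed time $N_i$ need not be small, so it cannot be discarded; one must still bound $\Theta^{(i)}$ on it. Your claim that such a $b$ is ``caught by the estimate for an earlier window'' conflates having \emph{some} escape time recorded in an earlier window with lying in the exponentially small exceptional set of that window. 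First, $b$ need not have any recorded escape time at all: by the definition of $E(b,\nu)$, if the images stay above $\delta^2$ until the next escape, then $E(b,\nu)=0$ even though $b$ is not in escape position at time $N_i$. Second, even when $\Theta^{(i')}(b)\ge N_i-\nu>0$, this can be as small as $1$, which places $b$ in no small set. Third, if instead you delete an exceptional set at every earlier window $N_{i'}$, the accumulated loss is dominated by the earliest window and is of order $e^{-(\tau_1-\tau_0)n}|I_n(a)|$, which does not decay in $m$ and cannot be absorbed into the claimed bound $2e^{-(\tau_1-\tau_0)(1-\iota)m}|I_n(a)|$ once $m\gg n$.

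The paper resolves this by abandoning the fixed grid $N_i=(1+\iota)^i n$: the windows are made parameter dependent, $n_{j+1}(b)=(1+\iota)n_j(b)-t_j(b)$ with $0\le t_j(b)\le\tau_1 n_j(b)$, where $t_j(b)$ is chosen so that the partition element at time $n_{j+1}(b)$ \emph{is} in escape position; thus Proposition~\ref{largedev} is only ever applied to an element in escape position at the start of its own window. The parameters for which no such $t_j$ exists (the set $F(\cdot,n_j)^c$ of \eqref{eq:Fcompl}) have spent at least $\tau_1 n_j$ of the current window escaping, hence already lie inside the exceptional set that Proposition~\ref{largedev} bounds for that window; this is what keeps the total error attached to the (at most two) windows meeting $[m,(1+\iota)m]$, with $n_j\ge m/(1+\iota)\ge(1-\iota)m$, instead of accumulating from generation $n$ onwards. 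To complete your argument you would essentially have to reproduce this device; the fixed windows alone do not suffice.
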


\begin{proof}
  Since $I_n (a)$ is in escape position we can apply
  Proposition~\ref{largedev} to the interval $[n, (1 + \iota)
    n]$. Ideally, if for every parameter $b \in I_n (a)$, the
  partition element $I_{(1 + \iota) n}(b)$ is also in escape position,
  we could just apply Proposition~\ref{largedev} over and over again
  and get the result for every interval $[(1 + \iota)^j n, (1 +
    \iota)^{j+1} n]$, where $j \geq 0$. However, it is not quite that
  easy.

  Consider an interval $I_N (\tilde{a})$ in escape position, and the
  corresponding time interval $[N,(1+\iota) N]$. The set of parameters
  that have escape time larger than $\tau_1 N$ inside this interval
  have Lebesgue measure that is an exponentially small fraction of
  $I_{N} (\tilde{a})$, by Proposition \ref{largedev}. More precisely,
  if we put
  \begin{multline*}
    F (\tilde{a}, N) = \{\, b \in I_{N} (\tilde{a}): I_{(1+\iota)N-t}
    (b) \text{ is in escape position} \\ \text{for some $0 \leq t \leq
      \tau_1 N$} \,\},
  \end{multline*}
  then
  \begin{equation} \label{eq:Fcompl}
    \lambda(F(\tilde{a},N)^c) \leq
    e^{-N (\tau_1 - \tau_0)} |I_{N} (\tilde{a})|,
  \end{equation}
  where $F(\tilde{a},N)^c$ stands for the complement of $F
  (\tilde{a},N)$.

  Hence, if we disregard from $F (\tilde{a},N)^c$, we may apply
  Proposition~\ref{largedev} to every partition element
  $I_{(1+\iota)N-t}$ in escape position at time $(1+\iota)N-t$, for $0
  \leq t \leq \tau_1 N$. We thereafter apply
  Proposition~\ref{largedev} to parameters in $F (\tilde{b}, M)$, with
  time $M$ in time intervals of the type $[ N(1+\iota)-t,
  (N(1+\iota)-t)(1+\iota) ]$, and so on.

  Since $t$ depends on the parameter we can follow a parameter $b$ in
  $I_n (a)$ and apply Proposition~\ref{largedev} on a (finite)
  sequence of time intervals $[n_j (b), (1 + \iota) n_j (b)]$ where
  $n_j (b) = (1+\iota) n_{j-1} (b) - t_{j-1} (b)$ and $0 \leq t_{j-1}
  (b) \leq \tau_1 n_{j-1} (b)$. On each new interval we loose
  $e^{-n_j(\tau_1-\tau_0)}|I_{n_j}(b)|$ according to
  \eqref{eq:Fcompl}, which means that the total measure of parameters
  we may have to delete (the corresponding ``bad'' set $F^c$), can be
  made arbitrarily small, if $n=n_1$ is large enough.

  For a fixed parameter $\tilde{b}$, every interval of the type $[m,
    (1+\iota) m]$, where $m \geq n = n_1$, intersects at most two
  intervals of the type $[n_j (\tilde{b}), (1 + \iota) n_j
    (\tilde{b})]$. Suppose that $[m, (1 + \iota) m]$ intersects the
  intervals $[n_j (\tilde{b}), (1 + \iota) n_j (\tilde{b})]$ and
  $[n_{j+1} (\tilde{b}), (1 + \iota) n_{j+1} (\tilde{b})]$. Then $n_j
  \leq m \leq (1 + \iota) m \leq (1 + \iota) n_{j+1}$.

  Consider now a partition element $I_{n_j (b)} (b)$. We write
  \begin{align*}
    I_{n_j (b)} (b) &= F(b,n_j (b)) \cup F(b,n_j(b))^c \\ F(b,n_j(b))
    &= \bigcup_{\tilde{b} \in F(b, n_j(b))} I_{n_{j+1}} (\tilde{b}).
  \end{align*}
  By \eqref{eq:Fcompl}, we have $\lambda(F(b, n_j (b))^c) \leq
  e^{-n_j(b) (\tau_1 - \tau_0)} |I_{n_j (b)} (b)|$ and for each of the
  intervals $I_{n_{j+1}} (\tilde{b})$ with $\tilde{b} \in F (b, n_j
  (b))$, we have by \eqref{eq:Fcompl} that
  \[
  \lambda(F(\tilde{b}, n_{j+1} (\tilde{b} ))^c) \leq e^{-n_{j+1}
    (\tilde{b}) (\tau_1 - \tau_0)} |I_{n_{j+1} (\tilde{b})}| \leq
  e^{-n_j (b) (\tau_1 - \tau_0)} |I_{n_{j+1} (\tilde{b})}|.
  \]
  Summing up, we see that the set of parameters in $I_{n_j} (b)$ with
  escape time at least $\tau_1 n_j + \tau_1 n_{j+1}$ in $[n_j, n_{j+1}
    (1 + \iota)]$ has measure at most
  \[
  2 e^{-n_j (\tau_1 - \tau_0)} |I_{n_{j+1}} (b)|.
  \]
  Taking into account that $n_j \leq m \leq n_{j} (1 + \iota)$, and
  $n_{j+1} \leq n_j (1 + \iota)$ we have
  \[
  \tau_1 n_j + \tau_1 n_{j+1} \leq \tau_1 m + \tau_1 (1 + \iota) m
  \leq \tau_1 m + \tau_1 2 m = 3 \tau_1 m,
  \]
  provided $\iota \leq 1$. Hence,
  \[
  \lambda ( \{\, \tilde{b} \in I_{n_j} (b) : \Theta_m (\tilde{b}) \geq
  3\tau_1 m \,\} ) \leq 2 e^{-n_j (b) (\tau_1-\tau_0)} |I_{n_j} (b)|.
  \]
  Therefore, since $m \leq n_j (b) (1+\iota)$, we have
  \[
    \lambda ( \{\, b \in I_n (a): \Theta_m (b) \geq 3\tau_1 m \,\} )
    \leq 2 e^{-m \frac{\tau_1-\tau_0}{1 + \iota}} |I_n (a)|.
  \]
  This proves the desired estimate, since $(1 + \iota)^{-1} \geq 1 -
  \iota$.
\end{proof}

\section{Proof of the Lower Bound}

To prove Theorem~\ref{the:main}, it remains to estimate the Hausdorff
dimension of the set
\[
E (y) = \{\, a \in [a_0, a_1] : |T_a^n (X(a)) - y| < e^{-\alpha S_n
  \log |T_a'|} \text{ for infinitely many } n \,\}
\]
from below.

Consider any sub-interval $S_1$ of $S$ and let $l = |S_1|$.  Let $0 <
\varepsilon < 1/2$, and $\iota > 0$.  We choose $\tau_0$, $\tau_1$ and
$\delta > 0$ so small that the conclusion of
Corollary~\ref{cor:largedev} holds, and so
that
\begin{equation} \label{eq:sumoffrequencies}
  1 - \frac{3 \tau_1}{\iota} + \frac{\tau l}{2} > 1.
\end{equation}
This is possible since $\tau_1$ can be chosen arbitrary as long as
$\tau_0 < \tau_1 < 1$ and $\tau_0 \leq c_0 \iota \delta^{1/3}$.

Because of \eqref{eq:Rokhlin}, and the fact that $a \mapsto \mu_a$ is
continuous in the weak-* topology, there is an $h$ such
that
\begin{equation} \label{eq:entropy}
  |h-h_{\mu_a}| < \varepsilon \qquad \text{for all } a \in A \subset
  [a_0, a_1],
\end{equation}
provided $A$ is a sufficiently small interval. We take such an
interval $A$ of the form $A = I_n (a)$, such that $I_n (a)$ is in
escape position.

We are going to prove that there is a non-empty open sub-interval $B$
of $S_1$ such that for any $y \in B$ the set
\begin{equation} \label{eq:Hausdorff_epsilon} E_A (y) = \{\, a \in A :
  |T_a^n (X(a)) - y| < e^{-\alpha S_n \log |T_a'|} \text{ for
    infinitely many } n \,\},
\end{equation}
has Hausdorff dimension at least $1 / (1 + \alpha)$. Since $S_1$ is
arbitrary, this implies that $\dimH E(y) \geq 1/(1 + \alpha)$ holds
for a dense and open set of $y \in S$.

Let $Y = \{y_1, y_2, \ldots, y_p\}$ be a set of points in $S_1$ such
that for any $y \in S_1$, there is a $y_k$ with $|y-y_k| <
\delta^2/4$. We will first prove that there is a $y\in Y$ such that
the set $E_A (y)$ satisfies
\[
\dimH E_A (y) \geq \frac{1}{1 + \alpha}
(1 - 17 \varepsilon - \iota).
\]
Since $E_A (y) \subset E (y)$, $Y$ is finite and $\varepsilon > 0$ is
arbitrary, this shows that there exists a $y \in Y$ such that $\dimH
E(y) \geq 1/(1+\alpha)$.  Now, this implies that there is an open and
non-empty interval $B \subset S_1$, such that for each $y \in B$, the
set $E(y)$ has Hausdorff dimension at least $1/(1 + \alpha)$. Indeed,
if this is not the case, then in any sub-interval of $S_1$, however
small, we can find a $y$ such that $\dimH E(y) < 1/(1+\alpha)$. It is
then possible to choose $Y$ such that the sets $E(y_1), \ldots, E
(y_p)$ all have Hausdorff dimension strictly less than $1/(1+\alpha)$,
which would yield a contradiction.

Hence, in order to prove that there exists an open and non-empty
interval $B \subset S_1$, for which $\dimH E(y) \geq 1/(1 + \alpha)$
for all $y \in B$, it suffices to show that
\[
\dimH E_A (y) \geq \frac{1}{1 + \alpha} (1 - 17 \varepsilon - \iota).
\]
holds for at least one $y \in Y$. We will do this below by
constructing a Cantor set $C$ with
\[
C \subset \bigcup_{y \in Y} E_A (y), \qquad \dimH C \geq \frac{1}{1 +
  \alpha} (1 - 17 \varepsilon - \iota).
\]

\subsection{Construction of a Cantor set}

We will define a sequence of families of intervals $\mathscr{I}_k$, $k
\geq 0$, such that
\[
\cup \mathscr{I}_{k+1} \subset \cup \mathscr{I}_k
\]
for all $k$ and
\[
C = \bigcap_{k=1}^\infty \bigcup \mathscr{I}_k \subset \bigcup_{y \in
  Y} E_A (y).
\]

For each $J \in \mathscr{I}_k$, we will define a large integer
$n(J)$. The families $\mathscr{I}_k$ will be constructed to have the
following five additional properties for $k \geq 1$. (Note that we do
not necessarily have these properties for $k = 0$.)

\renewcommand{\theenumi}{\roman{enumi}}
\begin{enumerate}
\item \label{property:length}
  For any $J \in \mathscr{I}_k$ holds
  \[
    e^{-(1 + \alpha) (h + 3\varepsilon) n(J)} \leq |J| \leq
    e^{-(1 + \alpha) (h - 3\varepsilon) n(J)},
  \]
  and
  \[
  (h - 2 \varepsilon)n \leq S_{n(J)} \log |T_a'| \leq (h + 2
  \varepsilon)n, \qquad a \in J.
  \]

\item \label{property:number}
  For any $J \in \mathscr{I}_k$ holds
  \[
    \# \{\, I \in \mathscr{I}_{k+1} : I \subset J \, \} \geq e^{(h - 4
      \varepsilon) n(K)}, \qquad J \supset K \in \mathscr{I}_{k+1}.
  \]

\item \label{property:time} For each $I \in \mathscr{I}_k$, there is a
  number $m$, such that for every $J \in \mathscr{I}_{k+1}$ with $J
  \subset I$, holds
  \[
    m \leq n(J) \leq (1+\iota) m.
  \]

\item \label{property:separation} Let $I \in \mathscr{I}_{k}$ and let
  $m$ be as in \eqref{property:time}. If $J_1$ and $J_2$ are two
  different elements of $\mathscr{I}_k$ that are subsets of $I$, then
  they are separated by at least
  \[
    e^{-(h + 3 \varepsilon) (1 + \iota) m}.
  \]

\item \label{property:image}
  Any $J \in \mathscr{I}_k$ satisfies
  \[
    |\xi_{n(J)} (J)| \geq \delta_k,
  \]
  where $\delta_k > 0$ is a number that only depends on $k$.
\end{enumerate}

Let $\mathscr{I}_0 = \{ A \}$. We define $n (A) = n$.

Suppose that we have constructed $\mathscr{I}_{k-1}$ according to the
properties \eqref{property:length}--\eqref{property:image} above. We
then construct $\mathscr{I}_k$ as follows.

For any $I \in \mathscr{I}_{k-1}$ we will construct certain
sub-intervals of $I$ that will belong to $\mathscr{I}_k$. Let $I \in
\mathscr{I}_{k-1}$ be fixed.

By Lemma~\ref{lem:deltaktodelta}, there is a $c_1 = c_1 (\delta_k)$, a
number $0 < r < K$ and an interval $\tilde{I}$ such that
$|\xi_{n(I)+r} (\tilde{I})| > \delta$ and $|\tilde{I}| \geq c_1 |I|$.

By Lemma~\ref{lem:entropy} and \eqref{eq:entropy}, there is a set $A_1
\subset \tilde{I}$ and a number $N$, such that $\lambda (A_1) \geq (1-
\varepsilon) \lambda (\tilde{I}) \geq (1- \varepsilon) c_1 |I|$
and
\begin{equation} \label{eq:derivativeJk} h - 2 \varepsilon <
  \frac{1}{n} \log |(T_a^n)' (x)| = \frac{1}{n} S_n \log |T_a'| < h +
  2 \varepsilon, \qquad n > N,\ a \in A_1.
\end{equation}

By Lemma~\ref{lem:images}, there is a number $M > N$ and a set $A_2
\subset A_1$ such that $\lambda (A_2) \geq (1 - \varepsilon) c_1 \tau
l |I|$ and that $\xi_n (a) \in S_1$ for a frequency $\tau l / 2$ of $n
> M$.

Take an $m > M$ such that
\[
2 e^{-(\tau_1-\tau_0)(1-\iota) m} < \varepsilon \tau l.
\]
By Corollary~\ref{cor:largedev}, there is a set $A_3 \subset
\tilde{I}$ such that
\[
\lambda (A_3) \geq (1 - \varepsilon \tau l) |\tilde{I}|
\]
and $\Theta_m (b) < 3 \tau_1 m$ for all $b \in A_3$. This implies that
for any $a \in A_3$ the frequency of times $t \in [m, (1+\iota)m]$
such that $|\xi_{t} (I_{t} (a))| \geq \delta^2$ is at least $1 - 3
\tau_1 / \iota$.

Let $A_4 = A_2 \cap A_3$. Then $\lambda (A_4) \geq (1 - 2 \varepsilon)
\tau l c_1 |I|$, and since the sum of the frequencies $\tau l /2$ and
$1 + 3 \tau_1 / \iota$ is larger than 1 by
\eqref{eq:sumoffrequencies}, there is for each $a \in A_4$ an $m (a)
\in [m, (1 + \iota) m]$ such that $\xi_{m(a)} (I_{m(a)} (a)) \in S_1$
and $|\xi_{m (a)} (I_{m (a)} (a))| \geq \delta^2$.  We let $m(a)$ be
the smallest such number in the interval $[m, (1 + \iota) m]$. In this
way we achieve that the partition elements $I_{m(a)} (a)$ and
$I_{m(b)} (b)$, for $a,b \in A_2$, are either disjoint or equal.

Since the set $Y$ is $\delta^2/4$-dense, each set $\xi_{m(a)}
(I_{m(a)} (a))$, $a \in A_2$, hits an element of $Y$. Hence, since $Y$
has $p$ elements, there is at least one $y_j \in Y$ and a
corresponding set $A_5 \subset A_4$
with
\begin{equation} \label{eq:A5measure}
  \lambda(A_5) \geq p^{-1} \lambda (A_4) \geq p^{-1} (1 - 2
  \varepsilon) \tau l c_1 |I|
\end{equation}
and
\[
B (y_j, \delta^2/4) \subset \xi_{m(a)} (I_{m(a)} (a)).
\]

Since for $a \in A_5$, the set $\xi_{m(a)} (I_{m(a)} (a))$ is of
length at least $\delta^2$, and the derivative of $T_a^{m(a)}$
satisfies \eqref{eq:derivativeJk}, we can conclude by
\eqref{eq:derivatives} and \eqref{eq:boundeddistortion2} that
\[
c^{-2} \delta^2 e^{-(h + 2\varepsilon) m(a)} \leq |I_{m(a)} (a)| \leq
c^2 e^{-(h - 2\varepsilon) m(a)}, \qquad a \in A_5.
\]
Hence, by taking $m$ large enough, and using that $m \leq m(a) \leq
(1+\iota) m$, we have
\[
 e^{-(h + 3\varepsilon) (1 + \iota) m} \leq |I_{m(a)} (a)| \leq e^{-(h
   - 3\varepsilon) m}, \qquad a \in A_5.
\]

We have
\[
\bigcup_{a \in A_3} I_{m(a)} \supset A_5,
\]
so, by taking $m$ sufficiently large,
\[
\# \{\, I_{m(a)} (a) : a \in A_5 \,\} e^{-(h - 3\varepsilon) m} \geq
\lambda (A_5) \geq p^{-1} (1 - 2\varepsilon) c_1 \tau l |I| \geq
e^{-\varepsilon m}.
\]
Therefore,
\begin{equation} \label{eq:numberofintervals}
  \# \{\, I_{m(a)} (a) : a \in A_5 \,\} \geq e^{(h - 4\varepsilon) m}.
\end{equation}

We are now ready to choose the intervals in $\mathscr{I}_k$ that are
subsets of $I$. To each $I_{m(a)} (a)$, with $a \in A_5$, there
corresponds an interval $J(a) \subset I_{m(a)} (a)$ with
\[
\xi_{m(a)} (J(a)) = [y_j - e^{-\alpha (h + 2 \varepsilon) m(a)}, y_j +
  e^{-\alpha (h + 2 \varepsilon) m(a)}].
\]
We let the intervals $J(a)$ be the intervals in $\mathscr{I}_k$ that
are subsets of $I$, and put $n(J(a)) = m(a)$.

By the bounded distortion and \eqref{eq:derivativeJk}, the definition
of the intervals $J(a)$ implies that
\[
2 c^{-1} e^{- (1 + \alpha) (h + 4 \varepsilon) m (a)} \leq |J (a)|
\leq 2 c e^{- (1 + \alpha) (h - 4 \varepsilon) m (a)},
\]
so if $m$ is large enough we have
\[
e^{- (1 + \alpha) (h + 5 \varepsilon) m (a)} \leq |J (a)| \leq e^{- (1
  + \alpha) (h - 5 \varepsilon) m (a)},
\]
which proves \eqref{property:length}. Property \eqref{property:number}
holds by \eqref{eq:numberofintervals}.

Since for each $a \in A_5$, the interval $J(a)$ is a subset of
$I_{m(a)} (a)$ that is much smaller than $I_{m(a)} (a)$, we can
conclude that if $J(a)$ and $J(b)$ are two intervals with $a,b \in
A_3$, then they are separated by
\[
e^{-(h + 4 \varepsilon) (1 + \iota) m} \geq e^{-(h + 4 \varepsilon) (1
  + \iota) m(a)},
\]
and $m(a) / m(b) \leq 1 + \iota$. This proves properties (iii) and (iv).

The procedure above is applied to all intervals $I \in
\mathscr{I}_{k-1}$, and in this way we get all intervals of
$\mathscr{I}_k$. By induction, we have constructed $\mathscr{I}_k$
satisfying \eqref{property:length}--\eqref{property:image}.

In conclusion, we have constructed the families of intervals
$\mathscr{I}_k$, such that the Cantor set
\[
C = C(Y) := \bigcap_{k=1}^\infty \bigcup \mathscr{I}_k
\]
has the property that for every $a \in C$, there are two sequences
$n_k$ and $j_k$ such that
\[
|T_a^{n_k} (X(a)) - y_{j_k}| \leq e^{- \alpha (h + \varepsilon) n_k}
\leq e^{-\alpha S_n \log |T_a'|}
\]
holds for every $k$. Clearly, since $Y$ is finite, there is an $y(a)
\in Y$ such that $|T_a^{n_k} (X(a)) - y (a)| < e^{- \alpha S_{n_k}
  \log |T_a'|}$ holds for infinitely many $k$. This shows that for
each $a \in C$ there is a $y \in Y$ such that $a \in E_A (y)$. In
other words, \[ C \subset \bigcup_{y \in Y} E_A (y). \]

%------------------Hausdorff dimension starts here-----------------

\subsection{Estimating the Hausdorff dimension}

It remains to estimate the Hausdorff dimension of the Cantor set $C$,
and show that
\[
\dimH C \geq \frac{1}{1 + \alpha} (1 - 17 \varepsilon - \iota).
\]
This will be done by defining a measure $\mu$ with support in $C$, and
using the mass distribution principle. The measure $\mu$ is defined as
follows. For any $I \in \mathscr{I}_0$, we define $\mu (I) =
1$. Suppose that $\mu (I)$ has been defined for all $I \in
\mathscr{I}_{k-1}$.  Then, if $J \in \mathscr{I}_k$ and $J \subset I
\in \mathscr{I}_{k-1}$, we define $\mu (J)$ as follows. By
construction, the interval $J$ is contained in the partition element
$I_{n (J)} (a)$, $a \in J$. We let $F_k$ be the union of all such
partition elements,
\[
F_k = \bigcup_{a \in K \in \mathscr{I}_k} I_{n (K)} (a).
\]
We then define $\mu (J)$ by
\begin{equation} \label{eq:mudefinition}
  \mu (J) = \frac{\lambda(I_{n (J)} (a)))}{\lambda (I \cap F_k)} \mu
  (I), \qquad a \in J.
\end{equation}
By induction this defines $\mu(I)$ for any $I \in \mathscr{I}_k$, $k
\geq 0$, and $\mu$ can be uniquely extended to a Borel probability
measure on $[a_0, a_1]$.

Suppose that $I \in \mathscr{I}_k$ for some $k >
0$. Let
\begin{equation} \label{eq:choiseofs}
  0 < s < \frac{1}{1 + \alpha} (1 - 17 \varepsilon - \iota) <
  \frac{1}{1 + \alpha} \biggl( \frac{h - 13 \varepsilon}{h + 4
    \varepsilon} - \iota \biggr).
\end{equation}
We first show that
\begin{equation} \label{eq:measureestimate1}
 \mu (I) \leq |I|^s.
\end{equation}
After we have done so, we will use this estimate to show a similar
estimate for a general interval $I$.

Let $a \in J \in \mathscr{I}_{k-1}$ such that $I \subset J$. By
\eqref{eq:mudefinition} and the properties
\eqref{property:length}--\eqref{property:image}, we
have
\begin{align*}
  \mu (I) &= \frac{\lambda(I_{n(I)} (a))}{\lambda(J \cap F_k)} \mu (J)
  \leq \frac{e^{- (h - 4 \varepsilon) n (I)}}{e^{- (h + 4 \varepsilon)
      (1 + \iota) n (I)} e^{(h - 5 \varepsilon) n (I)}} \mu (J) \\ &=
  e^{- (h - 13 \varepsilon - \iota (h + 4 \varepsilon)) n (I)} \mu (J)
  \\ &\leq e^{- (h - 13 \varepsilon - \iota (h + 4 \varepsilon)) n
    (I)} \leq |I|^s,
\end{align*}
where we used \eqref{eq:choiseofs} and $e^{-(1+\alpha)(h + 4
  \varepsilon) n (I)} \leq |I|$ in the last step.

Let $I \subset [a_0, a_1]$ be an interval, and suppose that $\mu (I) >
0$. Then there is a smallest number $l \geq 0$ with the property that
there exists an element of $\mathscr{I}_l$ that is a subset of
$I$. Let $J$ be an element of $\mathscr{I}_l$ with $J \subset I$.

For any $k < l$, there is at most two and at least one element of
$\mathscr{I}_k$ with non-empty intersection with $I$. We suppose that
there are exactly two intervals $J_1$ and $J_2$ in $\mathscr{I}_{l-1}$
with non-empty intersection with $I$. (The case with only one such
interval is simpler and can be treated in a similar way.)

We partition $I$ into two parts $I_1$ and $I_2$, corresponding to
$J_1$ and $J_2$, that is we partition $I$ so that
\begin{align*}
  &I = I_1 \cup I_2,&& I_1 \cap I_2 = \emptyset,\\ &I_1 \cap J_2 =
  \emptyset, && I_2 \cap J_1 = \emptyset.
\end{align*}

We may assume that $J \subset J_1$ and hence $J \subset I_1$. By
\eqref{property:length}, we then have the
estimates
\begin{equation} \label{eq:lengthofI1}
  |I| \geq |I_1| \geq |J| \geq e^{-(1 + \alpha)(h + 4 \varepsilon) n
    (J)}.
\end{equation}

Either $\mu (I_2) = 0$ or $\mu (I_2) > 0$. In the later case, there
exists a smallest number $q$ such that there exists an interval
$\hat{J} \in \mathscr{I}_q$ with $\hat{J} \subset I_2$. We then have
the estimates
\[
|I| \geq |I_2| \geq |\hat{J}| \geq e^{-(1 + \alpha)(h + 4 \varepsilon)
  n (\hat{J})}.
\]

Since $\mu (I) = \mu (I_1) + \mu (I_2)$ we have
\[
\frac{1}{2} \mu (I) \leq \max \{ \mu (I_1), \mu (I_2) \}.
\]
We will show that there is a constant $c_2$, independent of $I$, $I_1$
and $I_2$ such that
\[
\mu (I_1) \leq c_2 |I_1|^s, \qquad \mu (I_2) \leq c_2 |I_2|^s.
\]
It then follows that
\[
\mu (I) \leq 2c_2 \max \{ |I_1|^s, |I_2|^s \} \leq 2c_2 |I|^s.
\]
By the mass distribution principle, this shows that
\[
\dimH C \geq s.
\]

We show that $\mu (I_1) \leq c_2 |I_1|^s$. The corresponding
inequality for $I_2$ is proved in a very similar way.

Take $m$ such that $m \leq n (\omega) \leq (1+ \iota)m$ for all
$\omega \in \mathscr{I}_l$ that are subsets of $I_1$. This is possible
by \eqref{property:time}.

Suppose first that $|I_1| \geq e^{-(h - 4 \varepsilon) m}$. We let
$\tilde{I}_1$ be the shortest interval $\tilde{I}_1 \supset I_1$ such
that if $a \in K \in \mathscr{I}_l$, then $\tilde{I}_1$ contains
$I_{n(K)} (a)$. Hence $\tilde{I}_1$ can be obtained by slightly
expanding the interval $I_1$, and since any interval $I_{n(K)} (a)$ is
at most as long as $I_1$, we have $|I_1| \leq |\tilde{I}_1| \leq 3
|I_1|$.

By the definition of $\mu$ we have
\[
\mu (I_1) \leq \frac{\lambda(\tilde{I}_1 \cap F_l)}{\lambda (J_1 \cap
  F_l)} \mu (J_1).
\]
We now have by \eqref{eq:A5measure} and \eqref{eq:measureestimate1}
that
\begin{multline*}
  \mu (I_1) \leq \frac{|\tilde{I}_1|}{p^{-1} (1 - 2 \varepsilon) \tau
    l c_1 |J_1|} |J_1|^s \\ \leq \frac{1}{p^{-1} (1 - 2 \varepsilon)
    \tau l c_1} |\tilde{I}_1|^s \biggl( \frac{|\tilde{I}_1|}{|J_1|}
  \biggr)^{1 - s} \leq c_3 |I_1|^s.
\end{multline*}

In the case that $e^{-(h + 4 \varepsilon) (1 + \iota) m} \leq |I_1| <
e^{- (h - 4 \varepsilon) m}$, we have that $I_1$ intersects at most
\[
\frac{|I_1|}{e^{-(h+4\varepsilon) (1+\iota)m}} + 2 \leq 3
\frac{|I_1|}{e^{-(h+4\varepsilon) (1+\iota)m}}
\]
intervals from $\mathscr{I}_l$. We then have that
\begin{align*}
  \mu (I_1) &\leq e^{-(h - 4 \varepsilon) m} \frac{3
    |I_1|}{e^{-(h+4\varepsilon) (1+\iota)m}} \frac{1}{\lambda(J_1 \cap
    F_l)} \\ &\leq 3 |I_1|^s e^{-(h - 4 \varepsilon) m} \frac{e^{- (1
      - s)(h - 4 \varepsilon)m}}{e^{-(h + 4 \varepsilon) (1 + \iota)
      m}} \frac{1}{\lambda(J_1 \cap F_l)} \\ &\leq 3 |I_1|^s e^{-(h - 4
    \varepsilon) m} \frac{e^{- (1 - s)(h - 4 \varepsilon)m}}{e^{-(h +
      4 \varepsilon) (1 + \iota) m}} \frac{1}{e^{-(h + 4
      \varepsilon)(1 + \iota)m} e^{(h - 5 \varepsilon) m}} \\ &\leq 3
  |I_1|^s,
\end{align*}
if $\varepsilon$ and $\iota$ are small enough.

Otherwise, we have by \eqref{eq:lengthofI1} that $e^{-(1 + \alpha)(h +
  4 \varepsilon) m} \leq |I_1| < e^{- (h + 4 \varepsilon) (1 + \iota)
  m}$ and $I_1$ intersects at most $2$ intervals from $\mathscr{I}_l$.
Then
\[
\mu (I_1) \leq \frac{2 e^{- (h - 4 \varepsilon) m}}{e^{- (h + 4
    \varepsilon)(1 + \iota)m} e^{(h - 5 \varepsilon) m}} \mu (J_1) \leq
2 e^{-(h + 13 \varepsilon + \iota (h + 4\varepsilon)) m} \leq 2
|I_1|^s.
\]

Hence, in all cases we have $\mu (I) \leq c_2 |I|^s$, where $c_2 =
\max \{3, c_3\}$. This proves that $\dimH C \geq s$ and hence finishes
the proof.

\subsection{Comments on families of $\boldsymbol{\beta}$-transformations and
  Markov maps} \label{sssec:assbeta}

Suppose that we are in the setting of Section~\ref{ssec:markov}, that
is, $T_a$ is a family of Markov maps.  In this case, we can avoid
using the large deviation estimate found in
Corollary~\ref{cor:largedev}. This is because we have that $\xi_n (I_n
(a))$ is large, and one can use this to replace the use of
Corollary~\ref{cor:largedev}. The effect is that it is then possible
to construct the Cantor set without first choosing the finite set $Y$,
and in fact one can construct directly a Cantor set inside $E(y)$ for
any desired $y \in S$.

If the family $T_a$ is a family of generalised
$\beta$-transformations, or negative $\beta$-transformations, then the
same phenomenon takes place. For instance, for
$\beta$-transformations, most of the images $\xi_n (I_n(a))$ will be
intervals of the form $[0, b)$.

This comment explains why we get the lower bound for a potentially
larger set of $y$, in Corollaries~\ref{cor:beta}, \ref{cor:negbeta}
and \ref{cor:markov}.

\section{Proofs of Lemma~\ref{lem:Xconst} and Corollary~\ref{cor:analytic}} \label{sec:X}

\begin{proof}[Proof of Lemma~\ref{lem:Xconst}]
  In this proof, write $x=X$ which is now constant and does not depend
  on $a$. We consider the second iterate of $x$, namely $T_a^2(x) =
  T_a(T_a(x))$. Writing $T_a(x)= T(ax)$ (recall $\xi_n(a) = T_a^n(x)$)
  one readily verifies
  \[
  \xi_2'(a) = \frac{\partial }{\partial a} T(aT(xa)) =
  T'(aT(ax))(T(ax) + axT'(ax)).
  \]
  Since both $T(ax)$ and $T'(ax)$ are positive, and moreover $a > 1$
  and $x > 0$ we can easily choose $a$ to fulfill $|\xi_2'(a)| \geq
  (\min(T_a'(x)))^2$. Let $\gamma = \min \log |T_a'(x)|$. Then
  $\xi_2'(a) \geq e^{2 \gamma}$.  We now proceed by induction, to
  prove that the required assumption is satisfied (in fact, we mimic
  the proof of Proposition~4.6 in \cite{MA}). As induction assumption,
  suppose that for some $k\geq 0$, we have
  \[
  \xi_{k+2}'(a) \geq e^{(k+2) \gamma'},
  \]
  where $\gamma' = \gamma/2$. From the above, this is true when
  $k=0$. We prove that
  \begin{equation} \label{xigrowth}
    \xi_{k+3}'(a) \geq e^{(k+3) \gamma'}.
  \end{equation}
  Writing $T(a,x) = T(ax)$, we have the following recursion formula,
  \begin{equation} \label{da}
    \frac{\partial T^{n+1}(x,a)}{\partial a} = \frac{\partial
      T(\xi_n(a),a)}{\partial x} \frac{\partial T^n(x,a)}{\partial a}
    + \frac{\partial T(\xi_n(a),a)}{\partial a}.
  \end{equation}
  We have the explicit derivatives $T_a'(x)=aT(ax)$ and
  $\frac{\partial}{\partial a} T_a(x) = xT(ax)$. From this and the
  recursion formula (\ref{da}), together with the induction assumption
  give
  \begin{align}
    |\xi_{2+k+1}'(a) | &\geq |T_a'(\xi_{2+k}(a))| |\xi_{2+k}'(a)
    |\biggl(1- \frac{| \partial_a T_a(\xi_{2+k}(a)) | }
    {|T_a'(\xi_{2+k}(a))| |\xi_{2+k}'(a)|} \biggr) \label{xiprim} \\
    &\geq |(T_a^{k+1})'(\xi_2(a))||\xi_2'(a)| \prod_{j=0}^k
    \biggl(1-\frac{|\partial_a T_a(\xi_{2+j}(a))
      |}{|T_a'(\xi_{2+j}(a))||\xi_{2+j}'(a)|} \biggr)
    \nonumber \\
    &\geq e^{\gamma (k+1)} e^{\gamma' 2} \prod_{j=0}^k (1-\frac{x}{a}
    e^{-\gamma'(2+j)} ) .  \nonumber
  \end{align}
  Since $a$ can be chosen arbitrarily much larger than $x$ it is clear
  that the sum
  \begin{equation} \label{sum}
    \sum_{j=0}^{\infty} \frac{x}{a} e^{-\gamma'(2+j)}
  \end{equation}
  can be chosen arbitrarily small. Therefore, we may achieve,
  \begin{equation*}
    |\xi_{2+k+1}'(a) | \geq e^{(\gamma - \gamma')(k+1)}
    e^{\gamma'(2+k+1)} \prod_{j=0}^k (1-\frac{x}{a} e^{-\gamma'(2+j)}
    ) \geq e^{\gamma'(2+k+1)}.
  \end{equation*}
  Since the induction assumption for $k=0$ is true, (\ref{xigrowth})
  follows for all $k\geq 0$.

  Now, put
  \[
  Q_n(a) = \frac{(T_a^n)'(x)}{\xi_n'(a)}.
  \]
  Similar to (\ref{xiprim}) but without absolute values we get
  \begin{align*}
    \xi_{2+k+1}'(a) &= T_a'(\xi_{2+k}(a)) \xi_{2+k}'(a) \biggl(1 +
    \frac{ \partial_a T_a(\xi_{2+k}(a))} {T_a'(\xi_{2+k}(a))
      \xi_{2+k}'(a)} \biggr) \label{xiprim2} \\ &=
    (T_a^{k+1})'(\xi_2(a)) \xi_2'(a) \prod_{j=0}^k
    \biggl(1+\frac{\partial_a T_a(\xi_{2+j}(a)) }{T_a'(\xi_{2+j}(a))
      \xi_{2+j}'(a) } \biggr)
  \end{align*}
  Hence
  \begin{equation} \label{QN}
  Q_{2+k+1}(a) = \frac{\xi_{2+k+1}'(a)}{(T_a^{2+k+1})'(x)} = Q_2(a)
  \prod_{j=0}^k \biggl(1+\frac{\partial_a T_a(\xi_{2+j}(a))
  }{T_a'(\xi_{2+j}(a)) \xi_{2+j}'(a) } \biggr).
  \end{equation}
  Since the sum (\ref{sum}) can be made arbitrarily small, we may
  easily achieve
  \[
  |Q_{2+k}(a) - Q_2(a)| \leq |Q_2(a)|/100,
  \]
  (for instance). Hence the space and parameter derivatives are
  comparable and (\ref{eq:derivatives}) is fulfilled.
\end{proof}

Note that the lemma can also be concluded from \eqref{xigrowth} using
a result of Schnellmann \cite[Lemma~2.1]{Schnellmann}.

As a matter of fact, the $a$ dependence of the starting point $x$ is
illusory when dealing with Hausdorff dimension, since we can consider
the function $T(Kax)$ for some large $K$, instead of increasing
$a$. We then have \[ \xi_2'(a) = \frac{\partial }{\partial a}
T(KaT(Kax)) = T'(Ka T(Kax))(KT(Kax) + KaxT'(Kax)). \] If $T(Kax) > 0$
we can then conclude the same result as above, by taking $K$
sufficiently large. Since Hausdorff dimension does not change under
linear maps we conclude Corollary~\ref{cor:Xconst}.

\begin{proof}[Proof of Corollary~\ref{cor:analytic}]
  Suppose for simplicity that $X(\tilde a)$ belongs to a periodic
  point $p(\tilde a)$ of period $q=1$, i.e.\ a fixed point and that
  $\tilde{a}=0$. Of course the fixed point has to be repelling. If
  $T_a$ is an analytic family of maps then $p(a)$ is analytic by the
  Implicit function theorem. Also the multiplier $\lambda_a =
  T_a'(p(a))$ is analytic. We prove first that $\xi_n'(a)$ grows
  exponentially for $a$ close to $\tilde{a}$ where $n$ is as large as
  possible but such that $\xi_k(a)$ belongs to some neighbourhood of
  $p(a)$ for all $k \leq n$.  Put
  \[
  h(a) = X(a) - p(a),
  \]
  for $a$ close to $\tilde{a}$. By the transversality condition, $h$
  is not identically equal to zero. We have that $h(0) = 0$ and hence
  \begin{equation} \label{funch}
  h(a) = Ka^m + \ldots,
  \end{equation}
  where $K \neq 0$ and $m >0$. Recall that $\xi_k (a) = T_a^k
  (X(a))$. We now define an ``error function'' $E_k (a)$ via the
  equation
  \begin{equation} \label{xin}
    \xi_k (a) = h(a)\lambda_a^k  + p(a) + E_k (a).
  \end{equation}
  Arguing the same way as in
  \cite{MA} we differentiate (\ref{xin}) and obtain
  \begin{equation} \label{xinprim}
    \xi_k' (a) = \lambda_a^k \biggl( h'(a) + k h(a)
    \frac{\lambda_a'}{\lambda_a} + \frac{p'(a) + E_k'
      (a)}{\lambda_a^k} \biggr).
  \end{equation}
  The main point now is that $h'(a)$ is the dominant term above. Since
  $T_a (X(a))$ is close to $p(a)$ for small $a$ (remember $h(0) = 0$)
  we can use that the map $T_a$ is conjugate to the linear map $l_a(x)
  = \lambda_a (x-p(a)) + p(a)$ in a neighbourhood of $p(a)$. In other
  words, there exists a real-analytic map $\phi_a$, mapping a
  neighbourhood of $p(a)$ to itself, such that
  \[
    \phi_a \circ T_a(z) = l_a(x) \circ \phi_a(z),
  \]
  for $z$ in a neighbourhood of $p(a)$. Moreover, $\phi_a$ and its
  inverse satisfy
  \begin{align*}
    \phi_a (z) &= p(a) + z-p(a) + \mathcal{O} ((z-p(a))^2)
    \\ \phi_a^{-1}(z) &= p(a) + z-p(a) + \mathcal{O} ((z-p(a))^2),
  \end{align*}
  see e.g.\ \cite[page 31--33]{CarlesonGamelin}.

  The conjugation function $\phi_a$ is valid in a small neighbourhood
  $\mathcal{N}(a)$ of $p(a)$, for $a \in U$ where $U$ is a
  sufficiently small interval around $\tilde{a}$. We pick a
  neighbourhood $\mathcal{N}$ so that $\mathcal{N} \subset
  \mathcal{N}(a)$ for all $a \in U$. Let us choose some $0 < \eta < 1
  $ such that
  \[
  \{x : |x-p(a)| \leq \eta \} \subset \mathcal{N},
  \]
  for all $a \in U$. Then we choose $n=n(a)$ as large as possible such
  that
  \[
  |T_a^n(X(a)) - p(a)| = |\phi_a^{-1}(l_a^n \circ \phi_a(X(a))) -
  p(a)| \leq \eta.
  \]
  Since $\phi_a$ and $\phi_a^{-1}$ are real-analytic, this implies
  that $|l_a^n \circ \phi_a(X(a)) - p(a)| \leq C \eta$ and
  $|X(a)-p(a)| |\lambda_a^n| \leq C\eta$ for some constant $C$. For a
  fixed $k$, we get
  \begin{align}
    T_a^k (X(a)) &= \phi_a^{-1} (l_a^k \circ \phi_a(X(a))) =
    \phi_a^{-1} (p(a) + \lambda_a^k (\phi_a(X(a)) - p(a)) ) \nonumber
    \\ &= p(a) + \lambda_a^k (\phi_a(X(a)) - p(a)) + \mathcal{O}(
    (\lambda_a^k (\phi_a(X(a)) - p(a)))^2 ) \nonumber \\ &= p(a) +
    \lambda_a^k (X(a) - p(a) + \mathcal{O}( (X(a) - p(a))^2 ) ) \nonumber
    \\ & \hspace{1cm} + \mathcal{O} ( (\lambda_a^k (X(a) - p(a) +
    \mathcal{O}( (X(a) - p(a))^2 ) ) )^2 ) \nonumber \\ &= p(a) +
    \lambda_a^k (h(a) + \mathcal{O} (h(a)^2) ) + \mathcal{O} (
    (\lambda_a^k h(a))^2 ), \label{ordo}
  \end{align}
  as $a \to \tilde{a}$. We also have
  \begin{equation} \label{eq:En}
    \xi_{n(a)} (a) = T_a^{n (a)} (X(a)) = p(a) + \lambda_a^{n (a)}
    h(a) + E_{n(a)} (a).
  \end{equation}
  By assumption $|h(a)| |\lambda_a^{n(a)}| = |X(a) - p(a)|
  |\lambda_a^{n(a)}| \leq C \eta$, so if $\eta$ is small enough we can
  compare equation~\eqref{ordo} and \eqref{eq:En}, which shows that
  $|E_{n(a)} (a)|$ is much smaller than $|\lambda_a^{n(a)} h(a)| +
  |p(a)|$, provided $a \in U$ and $U$ is a sufficiently small interval
  around $\tilde{a}$.  This shows that $E_{n(a)} (a)$ is uniformly
  bounded in $a$ for some small parameter interval $U$.

  By definition of $n(a)$, we have that $n (a) + 1$ is the smallest
  integer for which the inequality $|T_a^{n+1}(X(a))-p(a)| \leq \eta$
  is not true. For $a \in U$ and $n=n(a)$ we then have
  \[
  \eta \geq |\xi_n(a) - p(a)| = | \lambda_a^n h(a) + E_n(a) | \geq
  \eta/(2\Lambda),
  \]
  where $\Lambda$ is defined in Assumption~\ref{ass:expanding}. Since
  $|E_{n(a)} (a)|$ is much smaller than $|\lambda_a^{n(a)} h(a)|$,
  $\lambda_a^{n(a)} h(a)$ dominates and thus
  \[
  n(a) \log |\lambda_a| + \log |h(a)| \sim \log \eta.
  \]
  Hence for small $a$ (close to $\tilde{a}=0$) we have
  \[
  n (a) \sim - \log |h(a)|.
  \]
  Since $\lambda_a'$ is a uniformly bounded real-analytic function for
  small deviations from $\tilde{a}$, we get that $|h'(a)| \sim
  |a^{m-1}|$ which is much larger than
  \[
  n (a) |h(a)| \sim |h(a)| |\log |h(a)||,
  \]
  see equation~(\ref{funch}). Since also $E_{n(a)} (a)$ is uniformly
  bounded, $h'(a)$ is the dominant term in (\ref{xinprim}) for $k =
  n(a)$ as $a \to \tilde{a}$.

  This means that
  \[
  \frac{1}{C} \lambda_a^n \leq |\xi_n'(a)| \leq C \lambda_a^n
  \]
  for $n \sim |\log |h(a)||$, where $C > 1$ is a constant. So in
  particular, $\xi_n'(a)$ is comparable to $(T_a^n)'(X(a)$ for $n \leq
  C_1 |\log |h(a)||$. Moreover, $|\xi_n'(a)| \geq e^{\gamma n}$ for
  some $\gamma > 0$, for $n \sim C_1 |\log |h(a)||$ and we can use the
  above argument in the proof of Lemma~\ref{lem:Xconst} (see also
  Proposition~4.6 in \cite{MA}) to get
  \begin{align*}
    \xi_{N+k+1}'(a) &= T_a'(\xi_{N+k}(a)) \xi_{N+k}'(a) \biggl(1 +
    \frac{ \partial_a T_a(\xi_{N+k}(a)) } {T_a'(\xi_{N+k}(a))
      \xi_{N+k}'(a)} \biggr) \label{xiprim2} \\ &=
    (T_a^{k+1})'(\xi_N(a)) \xi_N'(a) \prod_{j=0}^k
    \biggl(1+\frac{\partial_a T_a(\xi_{N+j}(a)) }{T_a'(\xi_{N+j}(a))
      \xi_{N+j}'(a) } \biggr).
  \end{align*}
  Since $|\xi_n'(a)|$ and $|(T_a^n)'(X(a))|$ grows exponentially and
  $|\partial_a T_a|$ is bounded where it is defined (there are
  finitely many points of discontinuity for $T_a$) we can make the sum
  \[
  \sum_{j=0}^{\infty} \frac{|\partial_a
    T_a(\xi_{N+j}(a)|}{|T_a'(\xi_{N+j}(a)) ||\xi_N'(a)|}
  \]
  arbitrarily small (by also choosing the deviation sufficiently
  small from $\tilde a$). From this we get the desired result from the
  corresponding equation~(\ref{QN}).
\end{proof}

\end{document}